\newtheorem{theorem}{Theorem}
\newtheorem{lemma}{Lemma}
\newtheorem{proposition}{Proposition}
\newtheorem{assump}{Assumption} 
\newtheorem{remark}{Remark}
\newcommand{\trueparam}{\theta^\dagger}
\newcommand{\truebeta}{\beta^\dagger}
\newcommand*{\sample}[2]{X_{t_{#1}}^{[#2]}}
\newcommand{\probconv}{\xrightarrow{\mathbb{P}_{\trueparam}}}
\newcommand{\distconv}{\xrightarrow{\mathcal{L}_{\trueparam}}}
\newcommand{\ruby}[2]{
\leavevmode
\setbox0=\hbox{#1}
\setbox1=\hbox{\tiny #2}
\ifdim\wd0>\wd1 \dimen0=\wd0 \else \dimen0=\wd1 \fi
\hbox{
\kanjiskip=0pt plus 2fil
\xkanjiskip=0pt plus 2fil
\vbox{
\hbox to \dimen0{
\small \hfil#2\hfil}
\nointerlineskip
\hbox to \dimen0{\mathstrut\hfil#1\hfil}}}}
\newcommand*{\addFileDependency}[1]{
\typeout{(#1)}
\@addtofilelist{#1}
\IfFileExists{#1}{}{\typeout{No file #1.}}
}\makeatother
\begin{document}

\title{Parameter Estimation for Weakly Interacting Hypoelliptic Diffusions}

\author[1]{Yuga Iguchi}
\author[2]{Alexandros Beskos}
\author[3]{Grigorios A. Pavliotis}
\affil[1]{School of Mathematical Sciences, Lancaster University, UK}
\affil[2]{Department of Statistical Science, University College London, UK}
\affil[3]{Department of Mathematics, Imperial College London, UK} 




\maketitle 

\begin{abstract}
We study parameter estimation for interacting particle systems (IPSs) consisting of $N$ weakly interacting multivariate hypoelliptic SDEs. We propose a locally Gaussian approximation of the transition dynamics, carefully designed to address the degenerate structure of the noise (diffusion matrix), thus 
leading to the formation of a well-defined full likelihood. Our approach permits carrying out statistical inference for a wide class of hypoelliptic IPSs that are not covered by recent works as the latter rely on 
the Euler-Maruyama scheme. We analyze a contrast estimator based on the developed likelihood with $n$ high-frequency particle observations over a fixed period $[0,T]$ and show its asymptotic normality as $n, N \to \infty$ with a requirement that the step-size $\Delta_n = T/n$ is such  that $N\Delta_n\rightarrow 0$, assuming that all particle coordinates (e.g.~position and velocity) are observed. In practical situations where only partial observations (e.g. particle positions but not velocities) are available, the proposed locally Gaussian approximation offers greater flexibility for inference, when combined with established Bayesian techniques. In particular, unlike the Euler-Maruyama-based approaches, we do not have to impose restrictive structures on the hypoelliptic IPSs. We present numerical experiments that illustrate the effectiveness of our approach, both with complete and partial particle observations.
\end{abstract}





\section{Introduction} \label{sec:intro}
Interacting particle systems (IPSs) and their mean field limit have been an active area of research for several decades, see~\cite{sznitman1991} for a classic reference, and the recent review articles~\cite{Diez_2022a, Diez_2022b} and the references therein. In addition to the well-known applications of such systems in physics~\cite{frank04}, synchronization~\cite{RevModPhys.77.137} and mathematical biology~\cite{Stevens_2000}, new and exciting applications of 
IPSs have emerged in recent years. As examples, we mention social dynamics and collective behaviour~\cite{NPT10, toscani2014}, pedestrian dynamics~\cite{GSW19}, large financial systems~\cite{giesecke2019}, active matter~\cite{peruani2008mean, Degond_al_2017}, urban dynamics~\cite{gaskin2022neural}, the dynamics of power grid networks~\cite{gaskin2024inferring} and machine learning \cite{SiS20, rigo_2025}.

In recent years there has been a great amount of research activity on the problem of statistical inference for IPSs and their mean field limit, starting from the classical works~\cite{Kasonga_1990, Bishwal_2011}. An incomplete list of recent work on inference for large IPSs and their mean-field limit (both the McKean SDE and the McKean-Vlasov PDE) includes kernel methods~\cite{LMT2021, LaL22}, contrast functions based on a pseudo-likelihood \cite{amo:23}, maximum likelihood estimation \cite{DeH23}, stochastic gradient descent \cite{PaReZa2025, sha:23}, approximate likelihood based on an empirical approximation of the invariant measure \cite{GeL24}, method of moments \cite{PaZ24} and eigenfunction martingale estimating functions~\cite{PaZ22,PaZ25}.  

In applications there are many examples of IPSs where each particle is modeled via a hypoelliptic SDE that has a degenerate noise (diffusion matrix) and thus involves rough and smooth components, with the latter not directly driven by the Brownian motion: E.g., the mean-field underdamped Langevin equations/generalized Langevin equations and the mean-field FitzHugh-Nagumo model.  In addition, most models for flocking and swarming, e.g.~the Cucker-Smale model and its variants~\cite{cattiaux_2018, GPY_2019}, are second-order in time whereas noise only appears in the equation for the velocity, with such a setting leading naturally to hypoelliptic SDEs. Furthermore, the Poisson-Vlasov-Fokker-Planck system~\cite{Herau_2016}, consisting of the Poisson equation coupled to the kinetic Fokker-Planck one, a most fundamental PDE system in plasma physics and galactic dynamics, can be obtained as the mean field limit of a system of weakly interacting hypoelliptic diffusions, of the form that we consider in this paper.

For standard hypoelliptic diffusions, i.e.~linear in the sense of McKean, parameter inference has been a very active research area in Statistics over the past two decades. 
\cite{poke:09} developed a locally non-degenerate Gaussian approximation to form a well-defined joint-likelihood for rough and smooth components rather than employing the Euler-Maruyama scheme that induces degeneracy. The authors numerically demonstrate that the joint likelihood performs well for estimating diffusion parameters when combined with MCMC methodologies, in a partial observation regime where only the smooth component is observed. 
However, the proposed likelihood ultimately leads to a biased estimation of the drift parameter in the rough component. 
\cite{SaTh:12} developed Euler-Maruyama (EM)-based contrast estimators under complete/partial observation regimes. In the latter scenario, the authors assume that the dynamics of the smooth component $X_S$ is determined via $dX_{S, t} = X_{R, t} dt$, where $X_{R, t}$ represents the rough component, and employ arguments in \cite{Gl:06} to develop a modified EM-based contrast function with the hidden component $X_R$ recovered via finite difference approximation from $X_{S}$'s dynamics. Note that the inferential approach based on the EM approximation requires that the smooth drift function is independent of any parameters and typically of the form $dX_{S, t} = X_{R, t} dt$. This restriction was later overcome in ~\cite{dit:19, mel:20, glot:21} by introducing a new locally Gaussian (LG) approximation that incorporates a higher order term in the conditional mean approximation of the smooth component. 
Then, assuming ergodicity for the process and that all coordinates are observed, these works showed the asymptotic normality for the LG-based contrast estimators under the usual    high-frequency complete observation regime, i.e., $\Delta_n \to 0$, $n\to \infty$ and $n \Delta_n \to \infty$ with the additional condition of $\Delta_n = o (n^{-1/2})$ on the data step-size, where $n$ is the number of observations. The latter condition on $\Delta_n$ was weakened as $\Delta_n = o (n^{-1/p})$, $p \ge 3$, in~\citep{igu:ejs,igu:bj}, and a similar high-frequency analysis was conducted by \cite{igu:24} for a specific class of highly degenerate hypo-elliptic diffusions, referred to as (Hypo-II) in their study, which was inspired by the class of  generalized Langevin equations.
Further contributions have involved treating hypo-elliptic SDEs in a multilevel Monte-Carlo setting to improve accuracy of expectation estimates \cite{iguchi2025antithetic}.

In this paper, extending the aforementioned developments for standard hypoelliptic diffusions, we construct and analyze an approximate likelihood and contrast estimator for inferring parameters in IPSs consisting of multidimensional hypoelliptic SDEs. 
In particular, we establish asymptotic results for the proposed estimator by combining the ideas of \citep{amo:23, amo:24} and \citep{glot:21, igu:ejs, igu:bj}.  Furthermore, to provide a complete picture, we also provide results for weakly interacting elliptic diffusions as a multivariate extension of  \cite{amo:23} in terms of the state space of each particle.  
To our knowledge, the only paper that contributes to the statistical inference problem for mean field hypoelliptic diffusions is~\citep{amo:24}, for unit dimension in space (i.e.~two-dimensional phase space). In that paper, the authors construct a contrast estimator based on the Euler-Maruayama discretisation of the SDE and then show its asymptotic normality under the setting $n, N \to \infty$ with a design condition $N \Delta_n \to 0$, where $n, N$ and $\Delta_n$ are the number of observations, particles and the step-size of data, respectively. Note that the time-interval of observations $[0,T]$ is assumed to be fixed, and ergodicity is not required. The setting and techniques that we consider in our paper are similar, but with some crucial differences: 
(a) we allow a multivariate setting; 
(b) we consider a locally non-degenerate Gaussian approximation instead of the Euler-Maruyama discretization that is locally degenerate; 
(c) due to the non-degenerate approximation, a more general class of interacting hypoelliptic diffusions can be treated; and (d) the asymptotic variance of our estimator for diffusion parameters can be smaller than the one obtained in~\cite{amo:24}.
\subsection{Settings}  
Throughout the paper we consider the following model class. The system has $N$ weakly interacting particles each modeled by a $d$-dimensional SDE, 
i.e.~for~$1\le i \le N$: 
\begin{align}
\begin{aligned}
& d X_t^{[i]} 
= 
\begin{bmatrix}
d X_{S, t}^{[i]}  \\[0.2cm]
d X_{R, t}^{[i]} 
\end{bmatrix}
= 
\begin{bmatrix}
V_{S, 0} (\alpha_S, X_t^{[i]}) \\[0.2cm]
V_{R, 0} (\alpha_{R}, X_t^{[i]}, \mu_t^{N}) 
\end{bmatrix} dt 
+ \sum_{j = 1}^{d_B} 
\begin{bmatrix}
\mathbf{0}_{d_S} \\[0.2cm]
V_{R, j} (\beta, X_t^{[i]}, \mu_t^{N}) dB_{j,t}^{[i]} 
\end{bmatrix},  \\[0.2cm] 
& \mathcal{L} (X_0^{[1]}, \ldots, X_0^{[N]}) = \mu_0 \times \cdots \times \mu_0, 
\end{aligned} \tag{IPS} 
\label{eq:ips-1}
\end{align}
where $X_t^{[i]} = (X_{S, t}^{[i]}, X_{R, t}^{[i]}) \in \mathbb{R}^{d_S} \times \mathbb{R}^{d_R}, \, \textcolor{black}{d_S \ge 0}, \,  d_R \ge 1, \, d_S + d_R = d$. Above, we~have:
\begin{itemize}[leftmargin=0.2cm]
\item $\theta = (\alpha_S, \alpha_R, \beta) 
\in \Theta = \Theta_{\alpha_S} \times \Theta_{\alpha_R} \times \Theta_{\sigma} \subseteq \mathbb{R}^{d_{\alpha_S}} \times \mathbb{R}^{d_{\alpha_R}} \times \mathbb{R}^{d_\beta}$, $d_{\alpha_{S}} \ge 1$, 
$d_{\alpha_{R}} \ge 1$,  $d_{\beta} \ge 1$, 
$d_\theta = d_{\alpha} + d_{\beta} = d_{\alpha_S} + d_{\alpha_R} + d_{\beta}$, 
is the unknown parameter, where $\Theta_{\alpha_S}, \Theta_{\alpha_R}$ and $\Theta_{\sigma}$ are compact and convex sets;
\item The coefficients are defined as:
\begin{gather*}
V_{S, 0} : \Theta_{\alpha_S} \times \mathbb{R}^d \to \mathbb{R}^{d_S}, \quad 
V_{R, 0} : \Theta_{\alpha_R} \times \mathbb{R}^d \times \mathcal{P}_2 (\mathbb{R}^d) \to \mathbb{R}^{d_R}; \\ 
V_{R, j} : \Theta_{\beta} \times \mathbb{R}^d \times \mathcal{P}_2 (\mathbb{R}^d) \to \mathbb{R}^{d_R}, \quad 1 \le j \le d_B, 
\end{gather*}
with $\mathcal{P}_2 (\mathbb{R}^d)$ denoting the set of probability measures on $\mathbb{R}^d$ with a finite second moment, endowed with the 2-Wasserstein metric;   
\item $\{B_t^{[i]}\}_{t \ge 0} = \{ (B_{1, t}^{[i]}, \ldots, B_{d_B, t}^{[i]}) \}_{t \ge 0}$, $i = 1, \ldots, N$, are independent $d_B$-dimensional standard Brownian motions; 
\item $\mu_t^{N}$ is the empirical measure of the system at time $t$, defined as $\textstyle \mu_t^{N} = \tfrac{1}{N }\sum_{i = 1}^N \delta_{X_t^{[i]}}$.  
\end{itemize}  
For this model class, we assume that the drift function in the smooth component $V_{S,0}$ depends on the rough component and that the noise of the rough component directly propagates into the smooth component.  Also, the interacting particle system (\ref{eq:ips-1}) corresponds to the following $d$-dimensional McKean-Vlasov SDE by taking the limit $N \to \infty$:  
\begin{align} \label{eq:mv_I}
d X_t = 
\begin{bmatrix}
d X_{S, t} \\[0.1cm] 
d X_{R, t}
\end{bmatrix} 
= 
\begin{bmatrix}
V_{S, 0} (\alpha_S, X_t) \\[0.1cm] 
V_{R, 0} (\alpha_R, X_t, \mu_t^\theta) 
\end{bmatrix}  dt 
+ 
\sum_{j = 1}^{d_B} 
\begin{bmatrix}
\mathbf{0}_{d_S} \\[0.1cm]
V_{R, j} (\beta,  X_t, \mu_t^\theta) dB_{j, t}
\end{bmatrix}, \quad X_0  \in \mathbb{R}^d,   
\end{align} 
where $\{B_t\}_{t \ge 0} = \{  (B_{1, t}, \ldots, B_{d_B, t}) \}_{t \ge 0}$ is a standard Brownian motion and $\mu_t^\theta$ is the law of $X_t$ under the parameter value $\theta \in \Theta$.  
\\ 

For the model class in equation (\ref{eq:ips-1}), we investigate parameter estimation within data frameworks specified as follows. We write the set of $(n+1)$-discrete time instances, $n \in \mathbb{N}$, as $\mathbb{T}_n = \{t_{j} \}_{0 \le j \le n}$, where $t_j = j T/ n$ with the time interval $[0,T]$ being fixed. In addition, we denote by $\Delta_n = T/n$ the (equidistant) step-size between observations. For $t \in \mathbb{T}_n$, we write 
$
\mathbb{X}_{t}^N = \bigl\{ X_{t}^{[i]} \bigr\}_{1 \le i \le N}$ and 
$ \mathbb{X}_{S, t}^N = \bigl\{ X_{S, t}^{[i]} \bigr\}_{1 \le i \le N} 
$ 
for model class (\ref{eq:ips-1}). Throughout the paper we will consider the following two observation regimes: 
\begin{enumerate}
\item[Complete:] All coordinates of particle trajectories $\bigl\{  \mathbb{X}_{t}^N \bigr\}_{t \in \mathbb{T}_n}$ are observed; 
\item[Partial:] Some coordinates of particle trajectories are observed, typically~$\bigl\{ \mathbb{X}_{S, t}^N \bigr\}_{t \in \mathbb{T}_n}$. 
\end{enumerate} 

\subsection{Motivating Model Examples}
The class of hypoelliptic interacting diffusions (\ref{eq:ips-1}) that we consider in this paper is sufficiently broad to encompass several important examples that arise in applications. Here, we present two such cases. 
\\

\noindent 
\emph{1. Interacting Underdamped Langevin dynamics.} We consider a system of $N$ particles in a confining potential, linked via an interaction potential and subject to noise and dissipation~\cite[Ch. 6]{pavl:14}. The $i$-th particle $(q_t^i, p_t^i) \in \mathbb{R}^{d_S} \times \mathbb{R}^{d_R}$ (with $d_S\equiv d_R=: d'$) writes as:  
\begin{align} \label{eq:I-UL} 
\begin{aligned} 
d q_t^{[i]} & = p_t^{[i]} dt; \\ 
d p_t^{[i]} & =  - \nabla V (q_t^{[i]}) dt 
- \tfrac{\kappa}{N} \sum_{j = 1}^N \nabla U (q_t^{[i]} - q_t^{[j]}) dt - \gamma p_t^{[i]} dt + \sqrt{2 \gamma \beta^{-1}} \, d B_t^{[i]},  
\end{aligned} \tag{I-UL}
\end{align}
where $V, U : \mathbb{R}^{d'} \to \mathbb{R}$ are some potential functions, $B_t^{[i]}, t \ge 0$, are $d'$-dimensional standard Brownian motions and positive parameters $\gamma, \beta$ and $\kappa$ are the friction, the inverse temperature and the interaction strength, respectively. (\ref{eq:I-UL}) belongs to model class (\ref{eq:ips-1}). A natural extension of this model, also lying in the class of models that we consider in this paper, is that of interacting generalized Langevin equations under a Markovian approximation~\cite{OttobrePavliotis11, DuongPavliotis2018}.

\noindent 
\\ 
\emph{2. Interacting FitzHugh-Nagumo SDEs  \cite{lu:21,Mischler_al_2016}.} The model describes the time evolution of the membrane potential of interacting neurons. This IPS can be expressed as:  
\begin{align} \label{eq:I-FHN} 
\begin{aligned}
d X^{[i]}_t & = \bigl( X_t^{[i]} - \tfrac{(X_t^{[i]})^3}{3} - Y_t^{[i]} \bigr) dt - \tfrac{\kappa}{N} \sum_{j = 1}^N \bigl(  X^{[i]}_t - X_t^{[j]} \bigr) dt + \sigma d B_t^{[i]};  \\ 
d Y^{[i]}_t & = \tfrac{1}{c} (X^{[i]}_t + a - b Y_t^{[i]} ) dt, 
\end{aligned} \tag{I-FHN}
\end{align} 
for $i = 1, \ldots, N$, where $a \in \mathbb{R}$ and $b, c, \kappa, \sigma$ are positive parameters (here $d_S=d_R=d'=1$). $X_t^{[i]}$ and $Y_t^{[i]}$ represent the voltage and recovery variable, respectively, of the $i$-th neuron. Note that compared to (\ref{eq:I-UL}), the drift function of the smooth component $Y_t^{[i]}$ involves some parameters and the state itself, thus~\eqref{eq:I-FHN} is out of the scope of the recent contribution \cite{amo:24}. 
\cite{pav:24} make use of a method of moments (which utilises the fact that the drift in~\eqref{eq:I-FHN} is a polynomial) to infer the parameters from observations of the SDE system.
We note that likelihood-based approaches pursued here have well-established optimality properties versus method of moment methods.   
%
\subsection{Contribution and Organization} The main objective of this paper is to develop and analyse parameter estimation methodologies for~\eqref{eq:ips-1} from discrete-time particle observations. As in the case of non-interacting hypoelliptic diffusions, the model likelihood and transition density are intractable. Thus, our focus is on developing tractable approximate model dynamics that enable likelihood-based inference within various data frameworks, including the partial observation regime.  
We stress that designing such a likelihood is a 
non-trivial task due to the degenerate noise structure. We build an approximate \emph{joint likelihood} by allowing Gaussian variates to propagate onto the smooth particles via an appropriate high-order stochastic Taylor expansion of the smooth drift function $V_{S, 0} (\alpha_S, X_t^{[i]})$, hence extending the arguments for standard (non-interacting) hypoelliptic diffusions studied in \citep{glot:21,  igu:ejs, igu:24}. Then, our choice of the proposed likelihood is justified by a rigorous asymptotic analysis of the deduced Maximum Likelihood Estimator (MLE) or contrast estimator.
In particular, we show under appropriate conditions that the estimator is asymptotically normal as $n, N \to \infty$ with the \emph{design condition} $\Delta_n = o (N^{-1})$, i.e.~$N \Delta_n \to 0$, which is the asymptotic regime recently studied in \cite{amo:23, amo:24}. 
We remark that under our observation setting, the time-interval $[0,T]$ is fixed and that $n \to \infty$ implies $\Delta_n = T/n \to 0$.  After validating the developed approximate likelihood through the asymptotic analysis, we also discuss the partial observation regime and emphasise that the proposed non-degenerate approximation enables filtering or data augmentation for latent particle trajectories, once combined with standard Bayesian statistics   methodologies. We can thus carry out inference within a broad class of  hypoelliptic IPSs and for general observation regimes without necessitating a specific structure on the smooth particles, such as $dX_{S, t}^{[i]} = X_{R, t}^{[i]} dt$, which is a crucial requirement for the Euler-Maruyama-based approach \cite{amo:24}.  
\\ 

Our main contributions are summarized as follows: 
\begin{enumerate}[leftmargin=0.3cm]
\item[a.] We construct a new contrast estimator for a broad class of weakly interacting hypoelliptic SDEs. In more detail, we develop an approximate joint transition density for $X_{t_{j+1}}^{[i]} | X_{t_j}^{[i]}$, which we can call upon 
for likelihood-based parameter estimation, both in full and partial observation settings.
Such a non-degenerate likelihood/transition density approximation is carefully designed to avoid biases in estimation methodologies that appear in earlier literature \cite{poke:09} for linear (in the sense of McKean) hypoelliptic diffusions. 
\item[b.] Under the high-frequency complete observation regime, we prove consistency and asymptotic normality for the proposed estimator. In particular, a new CLT rate is obtained for parameter $\alpha_S$ contained in the drift function of $X_{S, t}^{[i]}$. In addition, the asymptotic variance for the diffusion parameter estimator can be smaller than that of the Euler-Maruyama-based one \cite{amo:24} due to utilising the full joint likelihood of smooth and rough components. Our proof of the asymptotic results
and analysis of the asymptotic behaviors of the proposed non-degenerate likelihood
requires different arguments from those in \cite{amo:24}. 
\item[c.] We numerically verify the effectiveness of our proposed joint likelihood in applications involving both complete and partial observation regimes. 
\end{enumerate} 

The paper is organized as follows. Section \ref{sec:main} develops our non-degenerate approximate likelihood and provides the corresponding asymptotic results under the high-frequency complete observation regime. The partial observation setting is also discussed in this section. Section \ref{sec:numerics} presents numerical experiments. Proofs for the main analytic results are collected in Section \ref{sec:pfs}. We conclude in Section \ref{sec:conclusion}. 
\\

\noindent 
\textbf{Notation.} 
Throughout the paper, we will use the following notation: 
\begin{align*}
V_0 (\theta, x, \mu) & = 
\begin{bmatrix}
V_{S, 0} (\alpha_S, x) \\[0.1cm] 
V_{R, 0} (\alpha_R, x, \mu)
\end{bmatrix}, 
\quad 
V_{k} (\theta, x, \mu) = 
\begin{bmatrix} 
\mathbf{0}_{d_S} \\[0.1cm] 
V_{R, k} (\beta, x, \mu) 
\end{bmatrix}, 
\quad  1 \le k \le d_B.  
\end{align*}
We denote by $\mathbb{P}_\theta$ the law of process $\{X_t \}_{t \ge 0}$ under the parameter value $\theta \in \Theta$ and write the expectation under the probability measure $\mathbb{P}_\theta$ as $\mathbb{E}_\theta [\cdot]$.  
Symbols
$\xrightarrow{\mathbb{P}_{\theta}}$,  $\xrightarrow{\mathcal{L}_{\theta}}$ denote convergence in probability and distribution, respectively, associated with $\mathbb{P}_\theta$.
We define $\mathcal{F}_t^N:= \sigma \bigl\{ \{ B_s^{[i]} \}_{s \in [0, t]}, \  i = 1, \ldots , N \bigr\}$. 
$\mathcal{P} (E)$ denotes the set of probability measures on a measurable space $(E, \mathcal{E})$, and $\mathcal{P}_p (E)$, for $p \in \mathbb{N}$, denotes the subspace of $\mathcal{P}(E)$ with $p$-th finite moments.  
The $p$-Wasserstein metric $\mathcal{W}_p $ on $\mathcal{P}_p (\mathbb{R}^d)$ is defined as follows, for $\mu, \nu \in \mathcal{P}_p (\mathbb{R}^d)$: 
$$ 
\mathcal{W}_p (\mu, \nu) = \inf_{\pi} \Bigl\{  
\bigl( \textstyle\int_{\mathbb{R}^d \times \mathbb{R}^d} 
|x- y|^p \pi (dx, dy) \bigr)^{1/p} \, : \,  
\pi  \in \mathcal{P}_p (\mathbb{R}^d \times \mathbb{R}^d) \  \mathrm{with \ marginals} \ \mu, \nu \Bigr\}.  
$$
We write $\mathbb{E}_\mu[\varphi(W)]$, for a $\varphi:\mathbb{R}^{d}\to \mathbb{R}^{d_0}$, $d_0\ge 1$, to represent an expectation under $W\sim \mu$. For $z \in \mathbb{R}^m, \, m \in \mathbb{N}$, we define $\partial_z = [\partial/\partial z_1, \ldots, \partial/\partial z_m]^\top$ and $\partial_{z, k} \equiv \partial / \partial z_k, \, 1 \le k \le m$. For $k, d_1, d_2 \in \mathbb{N}$, we define $C_p^k (\Theta \times \mathbb{R}^{d_1}, \mathbb{R}^{d_2})$ as the space of functions $f: \Theta \times \mathbb{R}^{d_1} \to \mathbb{R}^{d_2}$ such that $x \mapsto f(\theta, x)$ are $k$-times differentiable for all $\theta \in \Theta$ and all their partial derivatives up to order $k$ have polynomial growth in $x \in \mathbb{R}^{d_1}$ uniformly in $\theta \in \Theta$.  
%

\section{Parameter Estimation for Interacting Hypoelliptic Diffusions}
\label{sec:main}
%
A primary objective of this paper is to construct an approximate \emph{joint} likelihood function that will allow both frequentist and Bayesian inference to be conducted over model class (\ref{eq:ips-1}) under complete and partial observation regimes. 
The construction of such an appropriate likelihood proxy via an underlying time-discretization of (\ref{eq:ips-1}) requires careful steps in a hypoelliptic setting as the standard Euler-Maruyama-type discretization cannot yield a well-defined joint data-likelihood due to the degenerate SDE noise. Hence, we instead introduce a tailored time-discretization, similar in spirit to the one developed in \citep{glot:21, igu:24, igu:bj} for non-interacting hypoelliptic diffusions. We note two key points here: 
(i) to build a non-degenerate covariance for each particle, we apply a higher-order stochastic Taylor expansion to $V_{S, 0} (\alpha_S, X_t^{[i]})$ so that a conditional approximation of $X_{S, t_{j+1}}^{[i]}$ given $\mathbb{X}_{t_j}^N$ contains Gaussian variates of size $\mathcal{O} (\Delta_n^{3/2})$;  
(ii) upon introduction of the  Gaussian noise the size $\mathcal{O} (\Delta_n^{3/2})$, the approximation of the drift function in the smooth coordinates must be carefully designed to include terms of sufficiently high order in $\Delta_n$, otherwise biases will emerge at estimates of the parameters in the drift of the rough components. Our construction of the joint likelihood below proceeds in manner that respects the above two  considerations.  
%
%
%
%
\subsection{Construction of Approximate Likelihood}
\subsubsection{Main Assumptions}
We present here a few initial assumptions on the model structure of (\ref{eq:ips-1}). Additional regularity assumptions are introduced later on. 
%
\begin{assump}[Linearity in $\mu$-Component]  \label{ass:law_dep}
(i) The coefficients of the rough components in  (\ref{eq:ips-1})  have the following form. For $(x, \mu) \in \mathbb{R}^d \times \mathcal{P}_2 (\mathbb{R}^d)$, 
$(\alpha_{R},  \beta) \in \Theta_{\alpha_R} \times \Theta_{\beta}$:
\begin{align}
\begin{aligned}  
V_{R, 0} (\alpha_R, x, \mu) 
& =  V_{R, 0}^{I} (\alpha_{R}, x) + 
\mathbb{E}_\mu\big[V_{R, 0}^{II}(\alpha_{R}, x,W)\big];
\\ 
V_{R, j}  (\beta, x, \mu) 
& =  V^{I}_{R, j} (\beta, x) 
+ 
\mathbb{E}_{\mu}\big[V_{R, j}^{II}(\beta, x,W)\big],
\quad 1 \le j \le d_{B},  
\end{aligned}  \label{eq:phi_R}
\end{align}
with $V^{I}_{R,0}: \Theta_{\alpha_R} \times \mathbb{R}^d \to \mathbb{R}^{d_R}$, $V^{I}_{R, j}: \Theta_{\beta} \times \mathbb{R}^{d} \to \mathbb{R}^{d_R}$ and some Borel measurable functions $V^{II}_{R, 0} : \Theta_{\alpha_R} \times \mathbb{R}^d \times \mathbb{R}^d \to \mathbb{R}^{d_R}$, $V^{II}_{R, k} : \Theta_{\beta} \times \mathbb{R}^d \times \mathbb{R}^d \to \mathbb{R}^{d_R}$, $0 \le k \le d_B$. 
\\ 
%
%
%
\end{assump} 
\begin{assump}[Uniform H\"ormander-Type Condition] \label{ass:hor}
We define the functions $a_R : \Theta_{\alpha_R} \times \mathbb{R}^d \times \mathcal{P}_2 (\mathbb{R}^d) \to \mathbb{R}^{d_R \times d_R}$, $a_S : \Theta \times \mathbb{R}^d \times \mathcal{P}_2 (\mathbb{R}^d) 
\to \mathbb{R}^{d_S \times d_S}$ as follows:  
\begin{align}
a_R (\beta, x, \mu) 
&:= V_R (\beta, x, \mu) V_R (\beta, x, \mu)^\top; 
\nonumber 
\\
a_S (\theta, x, \mu) &:= \partial_{x_R}^\top {V}_{S, 0} (\alpha_S, x) a_R (\beta, x, \mu) 
\bigl( \partial_{x_R}^\top {V}_{S, 0} (\alpha_S, x) \bigr)^\top, 
\label{eq:a_S}
\end{align}
where $V_R \equiv [V_{R, 1}, \ldots, V_{R, d_B}]$. We assume that: 
\begin{align*}
\inf_{(\beta, x, \mu) \in \Theta_{\alpha_\beta} \times \mathbb{R}^d \times \mathcal{P}_2(\mathbb{R}^d)} \det a_R (\beta, x, \mu) > 0, 
\quad 
\inf_{(\theta, x, \mu) \in \Theta \times \mathbb{R}^d \times \mathcal{P}_2(\mathbb{R}^d)} 
\det a_S (\theta, x, \mu)  > 0.   
\end{align*}
\end{assump} 
\begin{remark}
Assumption~\ref{ass:hor} ensures that the SDE system~\eqref{eq:ips-1}  forms a hypoelliptic SDE, in the sense of H\"{o}rmander~\cite[Sec. 6.2]{pavl:14} or~\cite[Sec. V.38]{RW00b}; in particular, the transition density of the $N$-particle system has a Lebesgue density: Equation~\eqref{eq:a_S} implies that the Brownian noise, $\{B_t^{[i]}\}$, in the rough component $X_{R, t}^{[i]}$ propagates onto $X_{S ,t}^{[i]}$ via the state variable $X_t^{[i]}$ of the drift $V_{S,0}(X_t^{[i]})$. 
The effect of this propagation of noise is clarified during the detailed construction of the likelihood later on.  

\end{remark}
\subsubsection{Approximate Likelihood for (\ref{eq:ips-1})}
We start via some notation. We write:   
\begin{align}
\label{eq:tripl}
{Z}_{j}^{[i], \theta} = (\theta, X_{t_j}^{[i]},  \mu_{t_j}^{N, \theta}), \qquad 1 \le i \le N, \quad \ 1 \le j \le n,  
\end{align}
with $\theta \in \Theta$ and $X_{t_j}^{[i]}$ determined via (\ref{eq:ips-1}). For twice differentiable function $f : \mathbb{R}^d \to \mathbb{R}$ and $z = (\theta, x, \mu) \in \Theta \times \mathbb{R}^d \times \mathcal{P}_2 (\mathbb{R}^d)$, we define: 
\begin{align} \label{eq:L}  
\begin{aligned} 
\mathscr{L}_m [f] (z) 
& := \sum_{1 \le k \le d} V_m^k (z) \partial_{x_k} f (x) 
\\ 
& +  \tfrac{1}{2} \sum_{1 \le k_1, k_2 \le d} a_{k_1 k_2} (z) \partial_{x_{k_1}} \partial_{x_{k_2}} f(x) \times \mathbf{1}_{m = 0}, 
\end{aligned} \quad m = 0,1, \ldots, d_B,  
\end{align} 
where $a = VV^\top$ with $V \equiv [V_1, \ldots, V_{d_B}]$. For $F: \mathbb{R}^d \to \mathbb{R}^{d'}$ with $d' \ge 1$, we interpret $\mathscr{L}_m [F] (z) \equiv \bigl[\mathscr{L}_m [F^1] (z), \ldots, \mathscr{L}_m [F^{d'}] (z) \bigr]^\top$. We can now introduce a time-discretisation for (\ref{eq:ips-1}) that provides a proxy likelihood with desirable statistical properties. For 
$1 \le i \le N$, $0 \le j \le n$, and $\theta = (\alpha_S, \alpha_R, \beta) \in \Theta$, we set: 
\begin{align} \label{eq:LG-1}
\begin{aligned} 
{X}_{S, t_{j+1}}^{[i]} 
& = {X}_{S, t_{j}}^{[i]} 
+ V_{S, 0} (\alpha_{S}, {X}_{t_{j}}^{[i]}) \Delta_n 
+ \! \! \! \!  \sum_{0 \le k \le d_B} \! \! \! \!   
\mathscr{L}_k \bigl[ {V}_{S, 0} (\alpha_S, \cdot) \bigr]
\bigl( Z_j^{[i], \theta} \bigr)
\int_{t_{j}}^{t_{j+1}} \! \!  \! \!  B_{k,u}^{[i]} du; \\ 
{X}_{R, t_{j+1}}^{[i]} 
& = {X}_{R, t_{j}}^{[i]} 
+ V_{R, 0} (Z_j^{[i], \theta}) \Delta_n  
+ \sum_{1 \le k \le d_B} 
V_{R, k} (Z_j^{[i], \theta}) \times \bigl(B_{k, t_{j+1}}^{[i]} - B_{k, t_{j}}^{[i]}  \bigr).  
\end{aligned} \tag{LG}
\end{align}
We use the convention $B^{[i]}_{0,t}\equiv dt$, thus the summand with $k=0$ in the expression for the smooth coordinates corresponds to the term $\mathscr{L}_k[ {V}_{S, 0} (\alpha_S, \cdot)]( Z_j^{[i], \theta})\Delta_n^2/2$.
In (\ref{eq:LG-1}) the rough component $X_{R}^{[i]}$ is treated via the Euler-Maruyama scheme. In contrast, the discretisation of the smooth part $X_S^{[i]}$ is obtained via a higher-order It\^o-Taylor expansion of the drift  $V_{S, 0}$. The Brownian motion in the rough coordinate $X_R$ propagates onto $X_S$ via the drift ${V}_{S, 0}$ in the form of $\textstyle \int_{t_{j}}^{t_{j+1}} B_{k, u}^{[i]} du$, which also follows a Gaussian distribution and has a correlation with $B_{k, t_{j+1}}^{[i]} - B_{k, t_{j}}^{[i]}$. Specifically, we have that:  
\begin{align*}
\textstyle 
\mathrm{Cov} \Bigl( \,  \int_{t_{j}}^{t_{j+1}} B_{k_1, u}^{[i]} du, \, B_{k_2, t_{j+1}}^{[i]} - B_{k_2, t_{j}}^{[i]} \Bigr)  = 
\begin{bmatrix}
\tfrac{\Delta_n^3}{3} & \tfrac{\Delta_n^2}{2} \times \mathbf{1}_{k_1 = k_2} \\[0.2cm] 
\tfrac{\Delta_n^2}{2} \times \mathbf{1}_{k_1 = k_2} & \Delta_n  
\end{bmatrix}, 
\end{align*}
for $1 \le k_1, k_2 \le d_B$, which implies that $({X}_{S, t_{j+1}}^{[i]},{X}_{R, t_{j+1}}^{[i]} )$ follows a non-degenerate Gaussian law given $\mathbb{X}_{t_j}^N$.

Building upon the scheme (\ref{eq:LG-1}), we propose an approximate likelihood (or `contrast function'). For notational simplicity, we make use of the following standardisation for (\ref{eq:LG-1}). For $1 \le i \le N$, $1 \le j \le n$, $\theta = (\alpha_S, \alpha_R, \beta) \in \Theta$, we set: 
\begin{align}
\label{eq:def_m}
\mathbf{m}_{j-1}^{[i], \theta} \equiv  
\begin{bmatrix}
\mathbf{m}_{S, j-1}^{[i], \theta} \\[0.2cm] 
\mathbf{m}_{R, j-1}^{[i], \theta} 
\end{bmatrix}
:= 
\begin{bmatrix}
\tfrac{X_{S, t_{j}}^{[i]} - {X}_{S, t_{j-1}}^{[i]} 
- V_{S, 0} (\alpha_{S}, {X}_{t_{j-1}}^{[i]}) \Delta_n 
- \mathscr{L}_0 [ V_{S, 0} (\alpha_S, \cdot))] (Z_{j-1}^{[i], \theta}) \frac{\Delta_n^2}{2}}{\sqrt{\Delta_n^3}} \\[0.5cm] 
\tfrac{X_{R, t_{j}}^{[i]} - {X}_{R, t_{j-1}}^{[i]} 
- V_{R, 0} (Z_{j-1}^{[i], \theta}) \Delta_n}{\sqrt{\Delta_n}} 
\end{bmatrix}, 
\end{align}
%
%
where we divide smooth/rough components by the size of the corresponding noise 
w.r.t.~$\Delta_n$. 
%
We have that 
$
\mathbf{m}_{j-1}^{[i], \theta} \, \bigl| \, \mathbb{X}_{t_{j-1}}^N
\sim \mathscr{N} \bigl( \mathbf{0}_d, \Sigma(Z_{j-1}^{[i], \theta})
\bigr)
$
with $\Sigma : \Theta \times \mathbb{R}^d \times \mathcal{P}_2 \to \mathbb{R}^{d \times d}$ defined as: 
\begin{align} 
& \Sigma (z) 
\equiv 
\begin{bmatrix}
\Sigma_{SS} (z) & \Sigma_{SR} (z) \\[0.2cm]  
\Sigma_{RS} (z) & \Sigma_{RR} (z) 
\end{bmatrix}
:= 
\begin{bmatrix}
\tfrac{1}{3} a_{S} (z) 
& \tfrac{1}{2} \partial_{x_R}^\top 
{V}_{S, 0} (\alpha_S, x) a_R (z) 
\\[0.2cm]  
\tfrac{1}{2} \bigl( \partial_{x_R}^\top {V}_{S, 0} (\alpha_S, x) a_R (z) \bigr)^\top 
& a_{R} (z) 
\end{bmatrix} \label{eq:Sigma}
\end{align}
for $z = (\theta, x, \mu) \in  \Theta \times \mathbb{R}^d \times \mathcal{P}_2 (\mathbb{R}^d)$.
\begin{lemma} \label{lemma:inv_Sigma}
Under Assumption \ref{ass:hor} it holds that: 
\begin{align*}
\inf_{(\theta, x, \mu) \in \Theta \times \mathbb{R}^d \times \mathcal{P}_2(\mathbb{R}^d)}  
\det \Sigma (\theta, x, \mu)  > 0,
\end{align*}
i.e.~$\Sigma$ is uniformly positive definite. Thus, the law of $\mathbf{m}_{j-1}^{[i], \theta}$ conditionally on $\mathbb{X}_{t_{j-1}}^N$ admits a non-degenerate Gaussian density for all $1 \le i \le N$, $1 \le j \le n$, $\theta \in \Theta$. 
\end{lemma}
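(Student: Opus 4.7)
My plan is to reduce the claim to a Schur-complement computation that exploits the structural identity behind (\ref{eq:a_S}). Writing $M := \partial_{x_R}^\top V_{S,0}(\alpha_S, x) \in \mathbb{R}^{d_S \times d_R}$, the definition of $a_S$ gives $a_S(z) = M\, a_R(z)\, M^\top$, so every block of $\Sigma(z)$ is built from just $M$ and $a_R$, namely
$$
\Sigma(z) = \begin{bmatrix} \tfrac{1}{3} M a_R M^\top & \tfrac{1}{2} M a_R \\[0.1cm] \tfrac{1}{2} a_R M^\top & a_R \end{bmatrix}.
$$

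Since Assumption \ref{ass:hor} guarantees $a_R(z)$ is invertible with $\inf_z \det a_R(z) > 0$, I would take the Schur complement of $\Sigma(z)$ with respect to the lower-right block. The off-diagonal correction collapses cleanly because $(\tfrac{1}{2} M a_R)\, a_R^{-1}\, (\tfrac{1}{2} a_R M^\top) = \tfrac{1}{4} M a_R M^\top = \tfrac{1}{4} a_S$, yielding
$$
\det \Sigma(z) = \det a_R(z)\, \det\bigl( \tfrac{1}{3} a_S(z) - \tfrac{1}{4} a_S(z) \bigr) = \bigl( \tfrac{1}{12} \bigr)^{d_S} \det a_R(z)\, \det a_S(z),
$$
which is uniformly bounded below by a positive constant by Assumption \ref{ass:hor}. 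For the positive-definiteness assertion itself, I would invoke the standard Schur-complement criterion for symmetric block matrices: $\Sigma(z) \succ 0$ iff both $a_R(z) \succ 0$ and the Schur complement $\tfrac{1}{12} a_S(z) \succ 0$, both of which hold uniformly by Assumption \ref{ass:hor}.

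The Gaussian density claim is then immediate: by construction of (\ref{eq:LG-1}) and (\ref{eq:def_m}), conditionally on $\mathbb{X}_{t_{j-1}}^N$ the vector $\mathbf{m}_{j-1}^{[i], \theta}$ is a zero-mean Gaussian vector affine in the Brownian increments $B_{k, t_j}^{[i]} - B_{k, t_{j-1}}^{[i]}$ and the integrated Brownian motions $\int_{t_{j-1}}^{t_j} B_{k, u}^{[i]} du$, with covariance matrix exactly $\Sigma(Z_{j-1}^{[i], \theta})$; invertibility of $\Sigma$ then delivers a Lebesgue density on $\mathbb{R}^d$. No substantive obstacle is expected here — the only essential ingredient is the algebraic cancellation $\tfrac{1}{3} - \tfrac{1}{4} = \tfrac{1}{12}$ stemming from the covariance moments of the pair $(\int_0^{\Delta} B_u du,\, B_\Delta)$, so $\Sigma$ is engineered precisely so that the Schur complement reproduces a positive multiple of $a_S$.
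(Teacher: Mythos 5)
Your proof is correct and is essentially the argument underlying the paper's own one-line proof, which simply asserts the determinant identity $\det \Sigma(z) = \tfrac{1}{12}\det a_S(z)\det a_R(z)$ without derivation; the Schur-complement computation you give is the standard way to obtain it, and the positive-definiteness criterion you invoke is the standard complement to the determinant bound. In fact your version is slightly more precise than the paper's: the correct constant is $(\tfrac{1}{12})^{d_S}$ rather than $\tfrac{1}{12}$ (they coincide only when $d_S=1$), though this discrepancy is immaterial for the uniform lower bound on $\det\Sigma$.
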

\begin{proof} We have that $\det \Sigma (z) = \tfrac{1}{12} \det a_S (z) \det  a_R (z)$, which is positive uniformly in $z = (\theta, x, \mu)$ under Assumption \ref{ass:hor},  thus we conclude. 
\end{proof}
%
%
\noindent Thus,  (\ref{eq:LG-1}) leads to the following non-degenerate approximate transition density for $X_{t_{j}}^{[i]}$ given $X_{t_{j-1}}^{[i]}$: 
%
%
\begin{align}
\label{eq:lg_density}
p_{\Delta_n}^{N} ({X}_{t_{j-1}}^{[i]}, {X}_{t_{j}}^{[i]}; \mu_{t_{j-1}}^N, \theta)
\equiv  
\tfrac{1}{\sqrt{(2\pi)^d \Delta_n^{3 d_S + d_R} 
\det \Sigma (Z_{j-1}^{[i], \theta})}} e^{ - \frac{1}{2} 
( \mathbf{m}_{j-1}^{[i], \theta} )^\top 
\Sigma ( Z_{j-1}^{[i], \theta} )^{-1}  
\mathbf{m}_{j-1}^{[i], \theta} }.   
\end{align}
for $1 \le i \le N, \, 1 \le j \le n$, $\theta \in \Theta$.
The contrast function for (\ref{eq:ips-1}) is then defined as follows: 
\begin{align}
\label{eq:contrast}
\ell_{n, N} (\theta)
:= \sum_{1 \le i \le N} 
\sum_{1 \le j \le n}
\bigl\{ 
\bigl( \mathbf{m}_{j-1}^{[i], \theta}  \bigr)^\top 
\Sigma ( Z_{j-1}^{[i], \theta} )^{-1}  
\mathbf{m}_{j-1}^{[i], \theta}  
+  \log \det \Sigma (Z_{j-1}^{[i], \theta})
\bigr\}, \ \theta \in \Theta. 
\end{align} 
\begin{remark}
For the case $d_S = 0$, i.e.~an interacting elliptic SDE with parameter $\theta = (\alpha_R, \beta)$, the contrast function (\ref{eq:contrast}) is expressed with $\mathbf{m}_{j-1}^{[i], \theta}$ and $\Sigma$ being replaced by $\mathbf{m}_{R, j-1}^{[i], \theta}$ and $a_R$, respectively. This is precisely a multivariate extension of the contrast function for IPSs of scalar elliptic SDEs studied in \cite{amo:23}.  
\end{remark}
\begin{remark}
It is critical to include the $\mathcal{O} (\Delta_n^2)$-term for the smooth component under the hypoelliptic setting, i.e.~$d_S \ge 1$, in the definition of the discretisation (\ref{eq:LG-1}). In the case of standard (non-interacting) hypoelliptic SDEs, \citep{poke:09} found that the MLE based on a likelihood that omits the $\mathcal{O} (\Delta_n^2)$-term in the smooth component suffers from bias in the estimation of the parameter $\alpha_R$ in the drift of the rough parameter. 
\end{remark}
\subsection{Asymptotic Results}
For the model class (\ref{eq:ips-1}), we define the contrast estimator as:
\begin{align*} 
\hat{\theta}^{n, N} = \mathrm{argmin}_{\theta \in \Theta} \ell_{n, N} (\theta)
=
\begin{cases}
(\hat{\alpha}_{R}^{n ,N}, \hat{\beta}^{n, N}),  & d_S = 0;  \\[0.2cm]  
(\hat{\alpha}_S^{n ,N}, \hat{\alpha}_{R}^{n ,N}, \hat{\beta}^{n, N}), & d_S \ge 1. 
\end{cases}
\end{align*} 
We write the true parameter value as  
$
\trueparam = 
(\alpha_{R}^\dagger, \beta^\dagger)$ for $d_S = 0$ and 
$\trueparam (\alpha_{S}^\dagger, \alpha_{R}^\dagger, \beta^\dagger)$ for $d_S \ge 1$,
and we assume it to lie in the interior of the parameter space $\Theta$. To state our main result, we introduce some additional conditions.  
\begin{assump} \label{ass:initial_dist} 
For any $m \ge 1$, there is a constant $C > 0$ such that $\mathbb{E}_{\mu_0} |W|^m < C$. 
\end{assump}
\begin{assump} \label{ass:growth}
\begin{enumerate}
\item[(a)] \emph{(Globally Lipshitz Condition on the Diffusion Coefficients.)} 
There exists $C > 0$ such that for all $\beta \in \Theta_\beta$, all $x, y \in \mathbb{R}^d$ and all $\mu, \nu \in \mathcal{P}_2 (\mathbb{R}^d)$: 
\begin{align*}
\sup_{1 \le j \le d_B} 
| V_{R, j} (\beta, x, \mu)  - V_{R, j} (\beta, y, \nu) |^2 
\le C \bigl( | x- y |^2 + \mathcal{W}_2 (\mu, \nu)^2  \bigr). 
\end{align*}
\item[(b)] \emph{(Locally Lipschitz Condition on the Drift Function.)} There exist constants $C_1, C_2, C_3 > 0$ and an integer $q > 1$ such that for all $\alpha \in \Theta_\alpha$, all $x, y \in \mathbb{R}^d$ and all $\mu, \nu \in \mathcal{P}_2 (\mathbb{R}^d)$:
\begin{align*}
\langle x- y, V_0 (\alpha, x, \mu) - V_0 (\alpha, y, \mu) \rangle & \le C_1 | x - y |^2; \\[0.2cm]  
| V_0 (\alpha, x, \mu) - V_0 (\alpha, x, \nu) |^2 & \le 
C_2 \mathcal{W}_2 (\mu, \nu)^2; \\[0.2cm]
| V_{0} (\alpha, x, \mu) - V_0 (\alpha, y, \mu) | 
& \le C_3 |x - y| \bigl( 1 + |x|^q + |y|^q \bigr). 
\end{align*}
\end{enumerate} 
\end{assump}
\begin{assump}  
\label{ass:test_function}
We have that 
$V_{S, 0} \in C_p^6 (\Theta \times  \mathbb{R}^d, \mathbb{R}^{d_S})$, 
$V_{R, m}^{I} \in C_p^4 (\Theta \times \mathbb{R}^d, \mathbb{R}^{d_R})$ and  $V_{R, j}^{II} \in C_p^4 (\Theta \times \mathbb{R}^{2d}, \mathbb{R}^{d_R})$, $j = 0, 1, \ldots, d_B$. Furthermore, there exist constants $C > 0$ and $q \in \{0, 1, \ldots, \}$ such that for all $x, y \in \mathbb{R}^d$, all $\alpha_S \in \Theta_{\alpha_S}$ and all $\gamma \in \{1, \ldots, d \}^{\iota}$, $\iota = 1, 2$:
\begin{align*}
| \partial_\gamma^x V_{S, 0} (\alpha_S, x) - \partial_\gamma^y V_{S, 0} (\alpha_S, y) | \le C |x- y| (1 + |x|^{q} + |y|^q ).
\end{align*} 
%
\end{assump}
\begin{assump} 
\label{ass:reg_coeff}
For all $(x, \mu) \in \mathbb{R}^d \times \mathcal{P}_2 (\mathbb{R}^d)$ and all multi-indices $\lambda \in \{1, \ldots, d\}^\iota$, 
$\iota = 0, 1, 2$, the mappings $\partial_\lambda^x V_{S, 0} (\cdot, x, \mu): \Theta \to \mathbb{R}^{d_S}$ and $V_{R, j}  (\cdot, x, \mu): \Theta \to \mathbb{R}^{d_R}$, $0 \le j \le d_B$, are three times continuously differentiable.  Furthermore, it holds:
\begin{enumerate}
\item[(a)] There exist constants $C_1, C_2 > 0$, $q_1, q_2, q_3 \in\{ 0, 1,\ldots\}$ such that for all multi-indices $\gamma \in \{1, \ldots,  d_\theta \}^{\ell}$, $\ell = 1, 2$, $\lambda \in \{1, \ldots, d\}^{\iota}$, $\iota = 0, 1, 2$,  all $(x, \mu), (y, \nu) \in \mathbb{R}^d \times \mathcal{P}_2 (\mathbb{R}^d)$ and all $\theta \in \Theta$:  
\begin{align*} 
&\bigl|  \partial_{\gamma}^\theta \partial_\lambda^x V_{S, 0} (\theta, x) 
-  \partial_{\gamma}^\theta \partial_\lambda^x V_{S, 0} (\theta, y) \bigr|
\le C_1 \bigl( 1 + |x|^{q_1} + |y|^{q_1} \bigr) |x-y|;  \\[0.3cm] 
& \sum_{0 \le j \le d_B}
\bigl| \partial_{\gamma}^\theta V_{R, j} (\theta, x, \mu) - \partial_{\gamma}^\theta V_{R, j} (\theta, y, \nu)  \bigr| 
\\[-0.2cm] 
& \qquad\,\,\,\,\,\,\,\,\, 
\le C_2 \bigl( | x - y| + \mathcal{W}_2 (\mu, \nu) \bigr) 
\bigl( 1 + |x|^{q_2} + |y|^{q_2} + \mathcal{W}_2(\mu,\delta_0)^{q_3}
+ \mathcal{W}_2(\nu,\delta_0)^{q_3} \bigr).  
\end{align*} 
\item[(a)] 
There exist constants $C_1, C_2 > 0$, $q_1, q_2, q_3 \in \{0, 1, \ldots\}$ such that for all multi-indices  $\gamma \in \{1, \ldots, d_{\theta} \}^{3}$, $\lambda \in \{1, \ldots, d\}^{\iota}$, 
$\iota = 0, 1, 2$, and all $(\theta, x, \mu) \in \Theta \times \mathbb{R}^d \times \mathcal{P}_2 (\mathbb{R}^d)$: 
\begin{gather*} 
\bigl| \partial_{\gamma}^{\theta} \partial_{\lambda}^x V_{S,  0} (\theta, x) \bigr| \le  C_1 ( 1 + |x|^{q_1}); \\   
\sum_{0 \le j \le d_B}
\bigl| \partial_{\gamma}^\theta V_{R, j} (\theta, x, \mu) \bigr| \le  C_2 ( 1 + |x|^{q_2} + \mathcal{W}_2 (\mu, \delta_0)^{q_3}). 
\end{gather*}  
\end{enumerate}

\end{assump}
\begin{assump} \label{ass:ident}
If it holds that:
\begin{gather*}
V_{S, 0} (\alpha_S, x) = V_{S, 0} (\alpha_S^\dagger, x), \quad 
V_{R, 0} (\alpha_R, x, \mu_t) = V_{R, 0} (\alpha_R^\dagger, x, \mu_t); \\  
V_R (\beta, x, \mu_t) = V_R (\beta^\dagger, x, \mu_t), 
\end{gather*}
$\mu_t (dx) \times dt \ \ \mathrm{a.e.}, \ \textrm{for all } t \in [0, T]$,
then $\alpha_S = \alpha_S^\dagger$, $\alpha_R = \alpha_R^\dagger$, $\beta = \beta^\dagger$. 
\end{assump}
Assumption \ref{ass:ident} is related to the identifiability of the statistical parameters. A similar condition is assumed in \cite{DeH23} where the authors studied the local asymptotic normality (LAN) for an $N$ weakly interacting elliptic diffusions under the continuous observation regime and discussed identifiability and non-degeneracy of asymptotic Fisher information. 
Assumption~\ref{ass:ident} is used in proving the consistency of the contrast estimator in Section \ref{sec:pf_consistency} and also implies the uniqueness of the minimiser for the objective function \eqref{eq:contrast} for large enough 
$n, N$; see Remark~\ref{rem:unique}.   \\  

Given the above assumptions, we state our main results as follows.
\begin{theorem}[Consistency]  \label{thm:consistency}
Under Assumptions \ref{ass:law_dep}--\ref{ass:ident}, we have that, as $n, N \to 
\infty$, then $\hat{\theta}^{n, N} \probconv \trueparam$.
\end{theorem}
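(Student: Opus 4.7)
The plan is to apply the standard M-estimator consistency argument: define a suitably normalized centred contrast and show (i) it converges in probability, uniformly in $\theta\in\Theta$, to a deterministic limit $U(\theta)$, and (ii) $U(\theta)\ge 0$ with equality only at $\trueparam$. The starting point is an It\^o--Taylor expansion of each increment $X_{t_j}^{[i]}-X_{t_{j-1}}^{[i]}$ under $\mathbb{P}_{\trueparam}$, yielding the decomposition $\mathbf{m}_{j-1}^{[i],\theta}=\mathbf{m}_{j-1}^{[i],\trueparam}+b_{j-1}^{[i]}(\theta)+\rho_{j-1}^{[i]}(\theta)$, where $\mathbf{m}_{j-1}^{[i],\trueparam}$ is conditionally $\mathscr{N}(\mathbf{0}_d,\Sigma(Z_{j-1}^{[i],\trueparam}))$ given $\mathcal{F}_{t_{j-1}}^N$ up to higher-order terms, $b_{j-1}^{[i]}(\theta)$ is an $\mathcal{F}_{t_{j-1}}^N$-measurable drift-mismatch of order $\Delta_n^{-1/2}$ in the smooth block (from $(V_{S,0}(\alpha_S^\dagger)-V_{S,0}(\alpha_S))\Delta_n/\sqrt{\Delta_n^3}$) and $\Delta_n^{1/2}$ in the rough block (from $(V_{R,0}(\trueparam)-V_{R,0}(\theta))\Delta_n/\sqrt{\Delta_n}$), and $\rho_{j-1}^{[i]}(\theta)$ is a stochastic remainder controlled via Assumption~\ref{ass:test_function}. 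Substituting into $\ell_{n,N}(\theta)$ and expanding the quadratic form using the block structure of $\Sigma^{-1}$ from \eqref{eq:Sigma} (uniformly non-degenerate by Lemma~\ref{lemma:inv_Sigma}) produces three essentially decoupled contributions: a Gaussian log-likelihood piece in $\beta$ (and in $\alpha_S$ through $a_S$) from $(\mathbf{m}^{\trueparam})^\top\Sigma^{-1}\mathbf{m}^{\trueparam}+\log\det\Sigma$; a smooth-drift quadratic in $b_S$ identifying $\alpha_S$; and a rough-drift quadratic in $b_R$ identifying $\alpha_R$. Cross-terms involving $\mathbf{m}^{\trueparam}$ vanish in $L^2$ since $\mathbf{m}^{\trueparam}$ acts as a conditionally centred martingale-like increment.

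Next, I would pass to the joint limit $n,N\to\infty$. Summation over $j$ yields Riemann approximations of integrals on $[0,T]$; averaging over the $N$ exchangeable particles invokes a law of large numbers coupled with propagation of chaos, so that the empirical measure $\mu_{t_{j-1}}^{N}$ can be replaced by the McKean--Vlasov law $\mu_{t}^{\trueparam}$ with $\mathcal{W}_2$-error of order $N^{-1/2}$, using the Lipschitz/polynomial-growth bounds of Assumptions~\ref{ass:growth} and \ref{ass:reg_coeff}. Compactness of $\Theta$ combined with the regularity in Assumptions~\ref{ass:test_function}--\ref{ass:reg_coeff} upgrades pointwise to uniform-in-$\theta$ convergence by a standard Ibragimov--Has'minskii tightness argument. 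The limit $U(\theta)$ decomposes into a non-negative sum of Kullback--Leibler-type and squared-$L^2$ functionals integrated against $\mu_t^{\trueparam}$ on $[0,T]$, each vanishing only when the corresponding coefficient matches its true value $\mu_t^{\trueparam}$-a.e.; Assumption~\ref{ass:ident} then forces $\theta=\trueparam$, and the standard argmin/continuous-mapping step delivers $\hat\theta^{n,N}\probconv\trueparam$.

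The main difficulty I anticipate is that the three parameter blocks come with different natural normalizations, of orders $nN/\Delta_n^2$ for $\alpha_S$, $nN$ for $\beta$, and $N$ for $\alpha_R$, which obstructs a clean single-scale limit statement. I would address this by a block-wise tightness argument: for every $\varepsilon>0$, show that $\inf_{\|\theta-\trueparam\|>\varepsilon}\bigl(\ell_{n,N}(\theta)-\ell_{n,N}(\trueparam)\bigr)\to+\infty$ in probability, splitting the infimum according to which of $\alpha_S$, $\alpha_R$, $\beta$ is $\varepsilon$-separated from truth and using the appropriate scaling in each case. A secondary technical point is that $\mu_{t_{j-1}}^{N}$ appears inside the nonlinear maps $\Sigma^{-1}(Z_{j-1}^{[i],\theta})$ and $b_{j-1}^{[i]}(\theta)$, so its replacement by $\mu_t^{\trueparam}$ generates $\mathcal{W}_2$-errors that must be summed over the $nN$ indices; this is precisely where the design condition $N\Delta_n\to 0$ is quantitatively used, keeping the combined discretization-plus-mean-field error negligible.
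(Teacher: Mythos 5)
Your general plan (normalized contrast $\Rightarrow$ uniform-in-$\theta$ limit $\Rightarrow$ identifiability $\Rightarrow$ argmin consistency, combined with propagation of chaos to replace $\mu^N_t$ by $\mu_t$) is the right skeleton and matches the paper's ingredients (Lemmas~\ref{lemma:base_conv}, \ref{lemma:key_conv}, and a tightness argument for uniformity). However, there is a genuine gap at exactly the point you flag as ``the main difficulty,'' and the paper resolves it in a way your sketch does not capture.

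The paper's proof is a four-step \emph{sequential} argument, and the non-obvious step is \textbf{Step~II}: before touching $\beta$ or $\alpha_R$, one proves the \emph{rate} $\hat{\alpha}_S^{n,N}-\alpha_S^\dagger = o_{\mathbb{P}_\trueparam}(\sqrt{\Delta_n})$, by Taylor-expanding $\partial_{\alpha_S}\ell_{n,N}(\hat{\theta}^{n,N})=0$ around $(\alpha_S^\dagger,\hat{\alpha}_R^{n,N},\hat{\beta}^{n,N})$ and showing the score piece $\sqrt{\Delta_n^3}/N\cdot\partial_{\alpha_S}\ell_{n,N}$ and Hessian piece $\Delta_n^2/N\cdot\partial_{\alpha_S}^2\ell_{n,N}$ both converge (Lemma~\ref{lemma:step-II}). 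This rate is then \emph{indispensable} in Step~III: the object one would like to show converges is $\tfrac{\Delta_n}{N}\ell_{n,N}(\theta)$, but the term
\[
\mathcal{U}^{(1)}_{n,N}(\theta)=\tfrac{1}{N}\sum_{i,j} b_S(\alpha_S,X_{t_{j-1}}^{[i]})^\top \Lambda_{SS,j-1}^{[i]}(\theta)\,b_S(\alpha_S,X_{t_{j-1}}^{[i]})
\]
is of order $|\alpha_S-\alpha_S^\dagger|^2/\Delta_n$ and is \emph{not} uniformly small over $\Theta_{\alpha_S}$. So there is no uniform-in-$\theta$ convergence of $\tfrac{\Delta_n}{N}\ell_{n,N}$ to a finite limit, and plugging in $\alpha_S=\hat{\alpha}_S^{n,N}$ only kills $\mathcal{U}^{(1)}$ because of the $o(\sqrt{\Delta_n})$ rate; mere consistency of $\hat{\alpha}_S^{n,N}$ is not enough (if $\hat{\alpha}_S-\alpha_S^\dagger$ were, say, $\Theta(\Delta_n^{1/4})$ this term would diverge). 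Your sketch does not produce a rate for $\hat{\alpha}_S$, and the alternative block-wise infimum argument you propose would need to control, uniformly over the free $\alpha_S$, the cross-term $L^{(2)}_{n,N}$ of \eqref{eq:contrast_decomp}, which scales like $\sqrt{N}/\Delta_n\cdot|\alpha_S-\alpha_S^\dagger|$; you do not say how. That route may be viable, but it is a genuinely different (and harder to make airtight) argument from the one the paper uses, and as written it is not a proof. The same care is needed again for $\alpha_R$ (Step~IV), where the paper isolates the $\alpha_R$-dependent difference $D(\theta)$ at scale $1/N$ and uses the algebraic identity $\Lambda\,\mathcal{B}=[\mathbf{0}_{d_S}^\top\ (\Sigma_{RR}^{-1}b_R)^\top]^\top$ (Lemma~\ref{lemma:key_matrix}) to show the smooth drift mismatch cancels exactly; this decoupling is not just heuristic.

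Two smaller corrections. First, the design condition $N\Delta_n\to 0$ is \emph{not} used for Theorem~\ref{thm:consistency}; it is required only for the CLT (Theorem~\ref{thm:clt}, via Propositions~\ref{prop:lln}--\ref{prop:clt}). Your claim that ``this is precisely where the design condition is quantitatively used'' misattributes the role of $N\Delta_n\to 0$. Second, the conditionally Gaussian decomposition in the paper is exact (the scheme \eqref{eq:LG-1} is exactly Gaussian given $\mathbb{X}_{t_{j-1}}^N$), not merely ``up to higher-order terms'' as you write; the remainder appears when the \emph{true} increment, not the scheme's, is expanded, and the relevant moment bounds are Lemma~\ref{lemma:moments} with remainders in the class $\mathcal{S}_{t_{j-1}}$.
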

\begin{theorem}[Asymptotic Normality]
\label{thm:clt}
Let Assumptions \ref{ass:law_dep}--\ref{ass:ident} hold. 
\begin{enumerate}[leftmargin=0.4cm]
\item[(a)] For $d_S = 0$, i.e.~ an interacting elliptic diffusion  with $\theta = (\alpha_R, \beta)$, it holds that:  
\begin{align*}
\begin{bmatrix}
\sqrt{N} (\hat{\alpha}_{R}^{n, N} - \alpha_{R}^\dagger ) \\[0.2cm] 
\sqrt{\tfrac{N}{\Delta_n}} (\hat{\beta}^{n, N} - \beta^\dagger) 
\end{bmatrix}
\distconv 
\mathscr{N} \Bigl(\mathbf{0}_{d_\theta},  
\mathrm{diag} \bigl[ \Gamma^{(\mathrm{E})}_{\alpha_R} (\trueparam)^{-1}, \Gamma^{(\mathrm{E})}_\beta (\trueparam)^{-1} \bigr]  \Bigr),  
\end{align*} 
%
%
as $n, N \to \infty,  \, N \Delta_n \to 0$,  where for indices $1 \le k_1, k_2 \le d_{\alpha_R}$, $1 \le k_3, k_4 \le d_{\beta}$, we have: 
\begin{align*}
\bigl[ \Gamma^{(\mathrm{E})}_{\alpha_R} (\trueparam) \bigr]_{k_1 k_2} 
& = \int_0^T \mathbb{E}_{\mu_t} \Bigl[
\bigl( \partial_{\alpha_R, k_1} V_{R , 0}^\top \cdot  
\Sigma_{RR}^{-1}  \cdot
\partial_{\alpha_R, k_2} V_{R , 0}
\bigr) (\trueparam, W, \mu_t)  \Bigr]  dt;  \\[0.1cm] 
\bigl[ \Gamma^{(\mathrm{E})}_{\beta} (\trueparam) \bigr]_{k_3 k_4} 
& = \tfrac{1}{2}
\int_0^T 
\mathbb{E}_{\mu_t} \Bigl[  
\mathrm{tr} \bigl\{ \bigl( \partial_{\beta, k_3} \Sigma_{RR}  \cdot  \Sigma_{RR}^{-1} \cdot 
\partial_{\beta, k_4} \Sigma_{RR} \cdot  
\Sigma_{RR}^{-1} \bigr)  (\trueparam, W, \mu_t) 
\bigr\}  \Bigr]  dt. 
\end{align*}
%
\item[(b)] For  $d_S \ge 1$, i.e.~ an interacting hypoelliptic diffusion with $\theta = (\alpha_S, \alpha_R, \beta)$, it holds that: 
\begin{align*}
\begin{bmatrix}
\sqrt{\tfrac{N}{\Delta_n^2}} (\hat{\alpha}_S^{n, N} - \alpha_S^\dagger ) \\[0.3cm]
\sqrt{N} (\hat{\alpha}_{R}^{n, N} - \alpha_{R}^\dagger ) \\[0.2cm] 
\sqrt{\tfrac{N}{\Delta_n}} (\hat{\beta}^{n, N} - \beta^\dagger) 
\end{bmatrix}
\distconv 
\mathscr{N} \Bigl(\mathbf{0}_{d_\theta},  
\mathrm{diag} 
\bigl[ 
\Gamma^{(\mathrm{H})}_{\alpha_S} (\trueparam)^{-1},  
\Gamma^{(\mathrm{H})}_{\alpha_R} (\trueparam)^{-1}, 
\Gamma^{(\mathrm{H})}_\beta (\trueparam)^{-1} \bigr]   \Bigr),  
\end{align*}
as $n, N \to \infty,  \, N \Delta_n \to 0$, where for indices $1 \le k_1, k_2 \le d_{\alpha_S}$, $1 \le k_3, k_4 \le d_{\alpha_R}$, $1 \le k_5, k_6 \le d_{\beta}$, we have:    
\begin{align}
\bigl[ \Gamma^{(\mathrm{H})}_{\alpha_S} (\trueparam) \bigr]_{k_1 k_2} 
& = 4  \int_0^T \mathbb{E}_{\mu_t} \Bigl[
\bigl(
\partial_{\alpha_S, k_1} V_{S , 0}^\top \cdot  \Sigma_{SS}^{-1}  \cdot  
\partial_{\alpha_S, k_2} V_{S, 0}
\bigr)  (\trueparam, W, \mu_t)  \Bigr]  dt; 
\label{eq:G_alpha_S}
\\[0.1cm]  
\bigl[ \Gamma^{(\mathrm{H})}_{\alpha_R} (\trueparam) \bigr]_{k_3 k_4} 
& = \int_0^T \mathbb{E}_{\mu_t} 
\Bigl[
\bigl( \partial_{\alpha_R, k_3} V_{R , 0}^\top \cdot  \Sigma_{RR}^{-1}  \cdot  
\partial_{\alpha_R, k_4} V_{R , 0} 
\bigr)  (\trueparam, W, \mu_t)  \Bigr]  dt; \label{eq:G_alpha_R}   \\[0.1cm] 
\bigl[ \Gamma^{(\mathrm{H})}_{\beta} (\trueparam) \bigr]_{k_5 k_6} 
& = \tfrac{1}{2}
\int_0^T 
\mathbb{E}_{\mu_t} 
\Bigl[  \mathrm{tr} \bigl\{ 
\bigl( \partial_{\beta, k_5} \Sigma \cdot \Sigma^{-1} \cdot \partial_{\beta, k_6} \Sigma  \cdot \Sigma^{-1} 
\bigr) (\trueparam, W, \mu_t) 
\bigr\}   \Bigr] dt. \label{eq:G_beta} 
\end{align} 
%
\end{enumerate}
\end{theorem}
%
%
\begin{remark} \label{rem:diff_param}
We compare our asymptotic normality result, Theorem~\ref{thm:clt}, with that obtained in~\cite{amo:24} and highlight the advantage of our estimator. In~\cite{amo:24} a hypoelliptic/kinetic system of interacting diffusions in a scalar space (i.e.~the state space is $\mathbb{R}^{2N}$) of the following form is considered: 
\begin{align*}
\begin{aligned}
d X_{S, t}^{[i]} & = X_{R, t}^{[i]} dt; \\ 
d X_{R, t}^{[i]} & = V_{R, 0} (\alpha, X_{t}^{[i]}, \mu_t^N) dt + V_{R, 1} (\beta, X_t^{[i]}, \mu_t^N) d B_{1, t}^{[i]},   
\end{aligned} \qquad 1 \le i \le N.
\end{align*} 
Here $(X_{S, t}^{[i]}, X_{R, t}^{[i]}) \in \mathbb{R}^2$, where $X_{R, t}^{[i]}$ is driven by a scalar Brownian motion $B_{1, t}^{[i]}$. In the complete observation regime, \cite{amo:24} proposes a contrast function based on the Euler-Maruyama approximation for the rough component $X_{R, t}^{[i]}$, and 
then shows that the proposed estimator minimising the contrast function,  $\hat{\theta}^{n, N}_{\mathrm{EM}} = (\hat{\alpha}_{\mathrm{EM}}^{n, N}, \hat{\beta}_{\mathrm{EM}}^{n, N})$, satisfies the following CLT:  
\begin{align*}
\begin{bmatrix}
\sqrt{N} (\hat{\alpha}_{\mathrm{EM}}^{n, N} - \alpha^\dagger ) \\[0.2cm] 
\sqrt{\tfrac{N}{\Delta_n}} (\hat{\beta}_{\mathrm{EM}}^{n, N} - \beta^\dagger) 
\end{bmatrix} \distconv 
\mathscr{N} \Bigl(\mathbf{0}_{d_\theta},  
\mathrm{diag} \bigl[ 
\Gamma_{\alpha} (\trueparam)^{-1}, 
\Gamma_{\beta} (\trueparam)^{-1} \bigr]
\Bigr),  
\end{align*}  
as $n, N \to \infty,  \, N \Delta_n \to 0$,  where  for $1\le k_1, k_2 \le d_\alpha$, $1 \le k_3, k_4 \le d_\beta$: 
\begin{align}
\bigl[ \Gamma_{\alpha} (\trueparam) \bigr]_{k_1 k_2} & = 
\int_0^T \mathbb{E}_{\mu_t} \Bigl[
\Bigl( \tfrac{ \partial_{\alpha, k_1} V_{R, 0}    \times \partial_{\alpha, k_2} V_{R, 0} }{\Sigma_{RR}} 
\Bigr) (\theta^\dagger, W, \mu_t) \Bigr] dt; \label{eq:em_alpha} \\ 
\bigl[ \Gamma_{\beta} (\trueparam)  \bigr]_{k_3 k_4} 
& = \tfrac{1}{2} \int_0^T \mathbb{E}_{\mu_t} 
\Bigl[
\Bigl(  \tfrac{\partial_{\beta, k_3} \Sigma_{RR} \times  
\partial_{\beta, k_4} \Sigma_{RR}}{\Sigma_{RR}^2}
\Bigr) (\trueparam, W, \mu_t)
\Bigr] dt,
\label{eq:em_beta} 
\end{align} 
with a scalar valued function $\Sigma_{RR} = V_{R, 1}^2$. 
We note that the asymptotic precision matrix (\ref{eq:G_alpha_R}) for $\alpha_R$ based on the locally non-degenerate Gaussian approximation coincides with (\ref{eq:em_alpha}) when $d\equiv 2$. An important difference appears in $\Gamma_\beta(\trueparam)$,  i.e.~the matrix associated with the diffusion parameter. In particular, the matrix (\ref{eq:G_beta}) involves the covariances across all coordinates, while (\ref{eq:em_beta}) contains only the covariances within the rough component, and such a structural difference can lead to smaller asymptotic variances for our estimator $\hat{\beta}^{n ,N}$. To observe this, we consider the interacting underdamped Langevin equation defined in (\ref{eq:I-UL}) with its diffusion coefficient defined by $V_{R, 1} \equiv \sigma := \sqrt{2 \gamma \beta^{-1}}$, where $\gamma, \beta > 0$. 
%
%
%
In this case we have that:
\begin{align*}
\Sigma = \sigma^2
\begin{bmatrix}
1/3 & \ 1/2 \\[0.2cm]
1/2 & \ 1 
\end{bmatrix}, \qquad 
\Lambda = \sigma^{-2}
\begin{bmatrix}
12 & - 6  \\[0.2cm]
- 6 & 4 
\end{bmatrix} 
\end{align*}
and then: 
\begin{align*}
\Gamma_{\sigma}^{(\mathrm{H})} (\trueparam) = 
\tfrac{4 T}{(\sigma^{\dagger})^2}. 
\end{align*}
%
%
On the other hand, the precision matrix $\Gamma_\sigma$ derived in \cite{amo:24} gives rise to:
\begin{align*}
\Gamma_\sigma (\trueparam)  = \tfrac{2T}{(\sigma^{\dagger})^2}, 
\end{align*} 
thus, $\Gamma_{\sigma}^{(\mathrm{H})} (\trueparam)^{-1} = \tfrac{1}{2} \Gamma_{\sigma} (\trueparam)^{-1}$ for this model example. 
\end{remark}
\begin{remark} 
We note that in the above statement invertibility of matrices $\Gamma^{(\mathrm{E})}$ and $\Gamma^{(\mathrm{H})}$ is implicitly assumed. Checking such invertibility in practice is in general difficult. However, for the submatrix $(\ref{eq:G_beta})$ one can observe its invertibility for the class of interacting underdamped Langevin type dynamics in (\ref{eq:I-UL}). For instance, let us consider (\ref{eq:ips-1}) as specified for $d_S = d_R = d_B$ and with: 
\begin{align*} 
V_{S, 0} (\alpha_S, x) = x_R,  \quad 
V_{R, j} (\beta, x, \mu) = \beta_j e_j, \quad j = 1, \ldots, d_B, 
\end{align*}  
with $\beta = (\beta_1, \ldots, \beta_{d_B}) \in \Theta_\beta , \, d_\beta = d_B$, for $x = (x_S, x_R) \in \mathbb{R}^d$, $\alpha_S \in \Theta _{\alpha_S}$, $\mu \in \mathcal{P}_2 (\mathbb{R}^d)$, where $e_j$ is the standard unit vector. Then, we have:
\begin{align*}
\Gamma^{(\mathrm{H})}_\beta (\trueparam) = 4T \times \mathrm{diag} 
\bigl[ (\beta_1^\dagger)^{-2}, \ldots, (\beta_{d_B}^\dagger)^{-2}  \bigr] \in \mathbb{R}^{d_B \times d_B},  
\end{align*}
which is invertible as long as $\beta^\dagger_j \neq 0$, $j = 1, \ldots, d_B$. For the precision matrices \eqref{eq:G_alpha_S}, \eqref{eq:G_alpha_R}, associated with the drift parameter $\alpha = (\alpha_S, \alpha_R)$, one may be able to derive a sufficient condition for these to be positive definite under Assumption \ref{ass:hor}, similarly to Proposition 15 in \cite{DeH23}. Here, let us focus on (\ref{eq:G_alpha_S}) and assume for simplicity that $d_{\alpha_S} = 1$. Then, noticing that $\Sigma_{SS}^{-1}$ is positive definite, we have that $\partial_{\alpha_S} V_{S, 0}^\top (\alpha_S^\dagger, x) \cdot \, \Sigma_{SS}^{-1} (\trueparam, x, \mu) \, \cdot \partial_{\alpha_S} V_{S, 0} (\alpha_S^\dagger, x) \ge 0$ for all $(x, \theta) \in \mathbb{R}^d \times \mathcal{P}_2(\mathbb{R}^d)$ where $0$ is attained iff $\partial_{\alpha_S} V_{S, 0} (\alpha_S^\dagger, x) = \mathbf{0}_{d_S}$. This implies that $\Gamma_{\alpha_R}^{(\mathrm{H})} (\trueparam) > 0 $ if there exists $(A, B) \in \mathcal{B} ([0, T]) \otimes \mathcal{B} (\mathbb{R}^d)$ with $\mathrm{Leb} (A) > 0$ s.t. for all $t \in A$, we have $\mu_t (B) > 0$ and $\partial_{\alpha_S}  V_{S, 0} (\alpha_S^\dagger, x) \neq \mathbf{0}_{d_S}$, $x \in B$. Similar arguments can be applied for the case of $\alpha_R$. Further exploration under more general settings is left for future research.    
\end{remark} 

\subsection{Parameter Estimation for Partially Observed Hypoelliptic IPS}
An important point is that the development of the non-degenerate, locally Gaussian approximation (\ref{eq:lg_density}) enables one to conduct parameter estimation for a broad class of IPSs arising in applications beyond the restricted class of kinetic IPSs studied in \cite{amo:24}.
We highlight this by considering the \emph{partial observation regime}, i.e.~the case where only a subset of coordinates of each particle's state $X_{t_j}^{[i]}$ 
are observed.
The non-degenerate transition density (\ref{eq:lg_density}) can be used within broader algorithms for parameter inference, either in a Monte-Carlo setting (e.g., via MCMC, Sequential Monte-Carlo approaches and combinations of thereof) or in an analytical one (via Kalman Filter, as in the discussion that follows).
For instance, one can aim to recover the \emph{marginal likelihood} by integrating out hidden coordinates from the well-defined full likelihood. In the case of standard (non-interacting) hypoelliptic SDEs, optimizing such a marginal likelihood deduced via the non-degenerate transition density is shown numerically to perform very effectively for parameter estimation; see e.g.~\cite{dit:19, igu:ejs, igu:24}. This approach can be naturally extended to the weakly interacting case (\ref{eq:ips-1}) when an approximate marginal likelihood is constructed from the full transition density (\ref{eq:lg_density}). 
We focus on a common scenario in applications where only the smooth components $\{ \mathbb{X}_{S, t}^N \}_{t \in \mathbb{T}_n}$ are observed. For the state $X_t^{[i]} = [X_{S, t}^{[i], \top}, X_{R, t}^{[i], \top}]^\top$ with the hidden (rough) component $X_{R, t}^{[i]}$, the (log) marginal approximate likelihood is recovered as follows:
\begin{align}
\label{eq:partial}
& \log \Bigl(\prod_{i = 1}^N \int 
\Bigl\{ \prod_{j = 1}^n p_{\Delta_n}^{N} (X_{t_{j-1}}^{[i]}, X_{t_{j}}^{[i]}; \mu_{t_{j-1}}^N, \theta) 
\Bigr\} p (X_{t_0}^{[i]}; \theta) \, 
d X_{R, t_0}^{[i]} \cdots d X_{R, t_n}^{[i]} 
\Bigr) \\
& \qquad \equiv \sum_{i = 1}^N \log f \bigl( \{ \mathbb{X}_{S, t}^N \}_{t \in \mathbb{T}_n}; \theta  \bigr),  
\nonumber 
\end{align}
where $p (X_{t_0}^{[i]}; \theta)$ is the density of initial state  and $p_{\Delta_n}^N$ the locally Gaussian density in (\ref{eq:lg_density}). 
The estimator based on the marginal likelihood (\ref{eq:partial}) is expected to produce accurate results in practice due to the asymptotic unbiasedness of the full likelihood (Theorems \ref{thm:consistency}, \ref{thm:clt}), although delivering an asymptotic analysis for the marginal likelihood is beyond the scope of this paper. 
Note that such an analysis for the non-degenerate marginal likelihood is missing in the literature even for the class of standard SDEs (linear in the sense of McKean).  
%

\begin{remark} \label{rem:CGN-IPS}
For a rich subclass of weakly interacting elliptic/hypoelliptic diffusions, we can calculate analytically the marginal likelihood (\ref{eq:partial}) via the Kalman filter. To observe this, we again consider the case where only the smooth particle trajectories $\{ \mathbb{X}_{S, t}^N \}_{t \in \mathbb{T}_n}$ are observed and assume that the locally Gaussian approximation (\ref{eq:LG-1}) applied to an IPS is expressed as a linear stochastic system in the hidden component $X_{R,t}^{[i]}$ given the observation $\{ \mathbb{X}_{S, t}^N \}_{t \in \mathbb{T}_n}$, i.e.~for $0 \le j \le n-1$ and $1 \le i \le N$: 
\begin{align*}
\begin{bmatrix}
{X}_{S, t_{j+1}}^{[i]} \\[0.1cm]
{X}_{R, t_{j+1}}^{[i]}
\end{bmatrix}
& = A (\Delta_n, {X}_{S, t_{j}}^{[i]}, \mu_{S, t_j}^{N}, \theta) 
+ B (\Delta_n, X_{S, t_j}^{[i]}, \mu_{S, t_j}^{N}, \theta)  X_{R, t_j}^{[i]}  \\ 
& \qquad 
+ C (\Delta_n, {X}_{S, t_{j}}^{[i]}, \mu_{S, t_j}^{N}, \theta) 
\begin{bmatrix}
\textstyle \int_{t_j}^{t_{j+1}} B_{s}^{[i]} ds \\[0.2cm]
B_{t_{j+1}}^{[i]} - B_{t_{j}}^{[i]}
\end{bmatrix}
\end{align*}
for $A: [0, \infty) \times \mathbb{R}^{d_S} \times \mathcal{P}_2 (\mathbb{R}^{d_S}) \times \Theta \to \mathbb{R}^{d}$, 
$B: [0, \infty) \times \mathbb{R}^{d_S} \times \mathcal{P}_2 (\mathbb{R}^{d_S}) \times \Theta \to \mathbb{R}^{d\times d_R}$
and $C: [0, \infty) \times \mathbb{R}^{d_S} \times \mathcal{P}_2 (\mathbb{R}^{d_S}) \times \Theta \to \mathbb{R}^{d\times 2 d_B}$, 
where $\mu_{S, t_j}^{N}$ is the empirical distribution of $X_{S ,t_j}$. Note that $A$, $B$, $C$ can be nonlinear in the state $X_{S,t}^{[i]}$. Then, one can compute the marginal likelihood, $\textstyle f \bigl( \{ \mathbb{X}_{S, t}^N \}_{t \in \mathbb{T}_n}; \theta  \bigr)$ in (\ref{eq:partial}) by applying the standard Kalman recursion formula to the linear system. 
This model class encompasses a range of important model examples in practice, including interacting underdamped Langevin equations (\ref{eq:I-UL}) and the interacting FHN model (\ref{eq:I-FHN}). 
Conditionally Gaussian nonlinear SDEs without interaction have been studied in the literature, e.g.~\cite{igu:24, vr:22}, and the closed-form marginal likelihood for the weakly interacting case is derived from the same arguments as in those references by incorporating now the (observed) interaction dependency $\mu_S^N$. 

\begin{remark}
In Remark \ref{rem:CGN-IPS} we assumed that the smooth component $X_{S,t}$ is observed. This is the case that arises frequently in applications, i.e.~we observe the particle position but not its velocity. However, the same ideas apply for problems when the rough component is observed and one obtains a linear system in $X_{S,t}$ conditionally on $X_{R,t}$.
Thus, our non-degenerate approach gives greater flexibility in practice.  
In the next section, we provide numerical results for the interacting FitzHugh-Nagumo model, where only particles of rough coordinates $X_{R}$ are assumed to be observed.
\end{remark} 

%
\end{remark}

\section{Numerical Experiments}
\label{sec:numerics} 
We illustrate numerically the effectiveness of the proposed contrast function (\ref{eq:contrast}) for parameter estimation for models in class (\ref{eq:ips-1}). We examine two model examples referenced in Section \ref{sec:intro} and perform parameter inference under both a complete and a partial observation regime. The codes that reproduce the results below are available at \url{https://github.com/YugaIgu/parameter_estimation_ips_hypo}.

\subsection{Interacting FitzHugh-Nagumo Model} 
\label{sec:i-fhn-numerics}
This IPS is defined in (\ref{eq:I-FHN}) and involves the parameter vector $\theta = (a, b, c, \kappa, \sigma)$, where we now have $\alpha_S \equiv (a, b, c)$, $\alpha_R \equiv \kappa$ and $\beta \equiv \sigma$. 
We note that the Euler-Maruyama (EM)-based estimator studied in \cite{amo:24} cannot be employed in this case as the drift function of the smooth coordinate $Y_t^{[i]}$ is not equal to the rough coordinate $X_t^{[i]}$, as is the case for the kinetic Langevin equation. 
We set the true parameter to $\theta^\dagger = (0.2, 0.8, 1.5, 2.0, 0.5)$ and generate $100$ independent  replicates of complete observations $D_{\mathrm{C}}^{n, N} = \{ X_{t_j}^{[i]}, Y_{t_j}^{[i]} \}_{0 \le j \le n}^{1 \le i \le N}$ and partial trajectories of rough particles $D_{\mathrm{P}}^{n, N} 
= \{ X_{t_j}^{[i]}\}_{0 \le j \le n}^{1 \le i \le N}$ by applying the EM scheme to (\ref{eq:I-FHN}) with a small step-size $0.0005$ up to the terminal time $T = 30$ and then subsampling to obtain datasets with $(N, n, \Delta_n) = (100, 6000, 0.005)$. We compute the  estimators $\hat{\theta}^{n, N}_{\, \mathrm{C}}$ and $\hat{\theta}^{n, N}_{\, \mathrm{P}}$ by optimising the contrast function (\ref{eq:contrast}) given $D^{n, N}_{\mathrm{C}}$ and the marginal likelihood~(\ref{eq:partial}) given $D_{\mathrm{P}}^{n ,N}$, respectively, where we note that the marginal likelihood is tractable as described in  Remark~\ref{rem:CGN-IPS}. 
To find the estimates we used the adaptive moments optimiser (ADAM) with the following  specifications. For $\hat{\theta}^{n ,N}_{\mathrm{C}}$ we set (step-size) = $0.01$, 
(exponential decay rate for the first moment estimates) = $0.9$, (exponential decay rate for the second moment estimates) = $0.999$, (additive term for numerical stability) = $10^{-8}$ and (number of iterations) = $8,000$; for $\hat{\theta}^{n, N}_{\mathrm{P}}$, we made the  same choices as above except for (step-size) = $0.005$ and (number of iterations) = $5,000$. 

\begin{figure}[h]
\centering
\includegraphics[width=13.5cm]{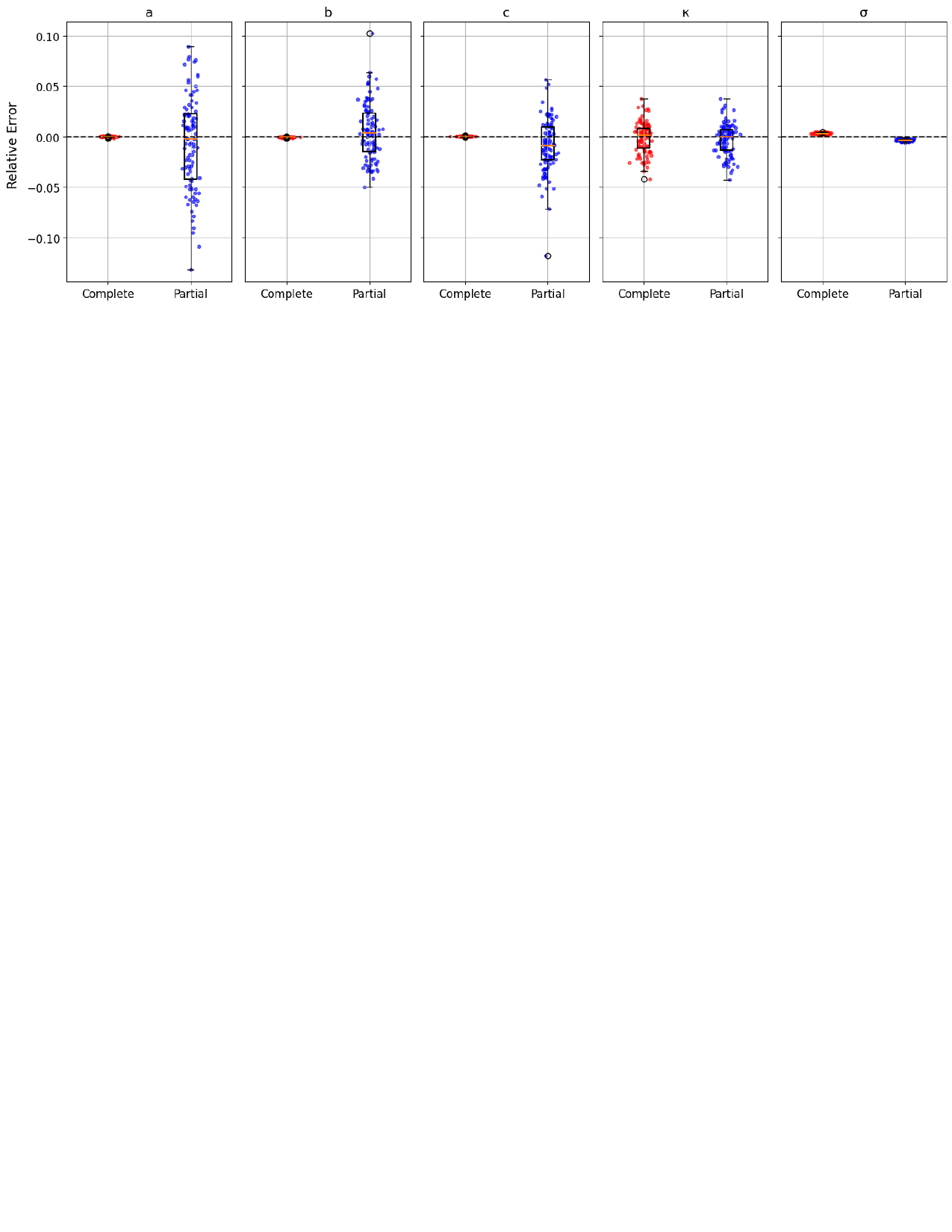}
\caption{Boxplots of $100$ realisations of estimators for the interacting FHN model (\ref{eq:I-FHN}), 
with parameters $(a,b,c,\kappa,\sigma)$. The left-side (resp.~right-side) in each panel corresponds to complete observations (resp.~partial ones).}
\label{fig:ips-fhn}
\end{figure} 

Fig.~\ref{fig:ips-fhn} shows boxplots of relative discrepancies $(\hat{\theta}^{n, N}_{\, \mathrm{w}, j} - \theta^{\dagger}_j)/\theta^{\dagger}_j,$, $1 \le j \le 5$, $w \in \{\mathrm{C}, \mathrm{P}\}$, from the $100$ replicate datasets.  
The individual realisations of $\hat{\theta}^{n, N}_{\mathrm{C}}$ and $\hat{\theta}^{n, N}_{\mathrm{P}}$ are indicated on the left and right sides of each frame, respectively. We observe that both $\hat{\theta}_{\mathrm{C}}^{n, N}$ and $\hat{\theta}_{\mathrm{P}}^{n, N}$ seem to correctly estimate the true values of all parameters. 
The  results for $\hat{\theta}_{\mathrm{C}}^{n, N}$ are in agreement with Theorem \ref{thm:clt} (CLT). That is, the drift parameters $\alpha_S = (a, b, c)$ in the smooth particles $X_{t_j}^{[i]}$ are estimated with the least uncertainty  as expected 
by the CLT rate $\sqrt{{N}/{\Delta_n^2}}$ for $\hat{\alpha}_S^{n, N}$. For partial observations, we note that the accuracy in estimating parameters ($\kappa, \sigma$) appearing in the dynamics of the observable components remains comparable to that in the case of full data. On the other hand, the uncertainty in estimating $\alpha_S = (a, b, c)$ increases, which can be expected by the lack of direct observation of $Y_{t}^{[i]}$. Still, the estimates for $\alpha_S$ are centered around the true values, which suggests that the locally Gaussian approximation \eqref{eq:LG-1} also provides asymptotically unbiased estimators in the partial observation scenario.

\subsection{Interacting Underdamped Langevin Equation} 
\label{sec:I-UL-num}
For the second example, we consider the interacting underdamped Langevin dynamics (\ref{eq:I-UL}) with $d' = 1$, for a bistable potential function $q \mapsto V (q) = \lambda (q - 0.5)^2$, $\lambda > 0$, and the quadratic interacting function $q \mapsto U (q) = q^2/2$. To observe the performance of estimation for the drift and diffusion parameters separately, we set $\sigma =\textstyle \sqrt{2 \gamma \beta^{-1}}$ so that the drift and diffusion coefficients of the velocity $p_t^{[i]}$ do not share the friction coefficient parameter $\gamma$. This choice of parameters is the one that is consistent with statistical mechanics.  Thus, the parameter vector is $\theta = (\lambda, \gamma, \kappa, \sigma)$ with its true value set at $\theta^\dagger = (2.0, 1.5, 2.0, 0.5)$. 
We generate $100$ replicates of synthetic complete trajectories $D_{\mathrm{C}}^{n, N} = \{ q_{t_j}^{[i]}, p_{t_j}^{[i]}\}_{0 \le j \le n}^{1 \le i \le N}$ and partial trajectories of the smooth components $D_{\mathrm{P}}^{n, N} = \{ q_{t_j}^{[i]}\}_{0 \le j \le n}^{1 \le i \le N}$ by using the EM scheme for (\ref{eq:I-UL}) with a small step-size $0.0005$ up to time $T = 30$. We then consider the following two experimental designs: 
\textbf{Design I.} $(N, n, \Delta_n) = (50, 3000, 0.01)$ and \textbf{Design II.} $(N, n, \Delta_n) = (50, 6000, 0.005)$,  obtained by subsampling from the full trajectories. Similarly to the previous experiment, we compute maximum-likelihood type estimators $\hat{\theta}^{n, N}_{\mathrm{C}}$ and $\hat{\theta}^{n, N}_{\mathrm{P}}$ based on the locally Gaussian approximation (\ref{eq:LG-1}) from $D_{\mathrm{C}}^{n, N}$ and $D_{\mathrm{P}}^{n, N}$, respectively. 
In addition, we computed the EM-based estimators proposed in \cite{amo:24} with the same data for comparison. In particular, the main focus here is that, first, under the complete observation regime, we numerically confirm Remark~\ref{rem:diff_param}, 
i.e.~that the variance of the estimator of the diffusion parameter obtained by (\ref{eq:LG-1}) is half that of the corresponding estimator derived via the EM approximation, for IPSs with a constant diffusion coefficient. Second, under the partial observation regime, we compare the performance of two estimators in terms of bias and variance. We recall here that in the latter scenario, \cite{amo:24} uses the finite-difference approximation of $d q_t^{[i]} = p_t^{[i]} dt$ to recover the unobserved particles $p_{t_j}^{[i]}$, and then these $ p_{t_j}^{[i]}$'s, together with the data $\{ q_{t_j}^{[i]} \}$, are connected to a contrast function obtained via the EM scheme applied only to the hidden component $p_{t_j}^{[i]}$. To compute the estimates, we used the ADAM optimizer and adopted the same parameter specification presented in Section \ref{sec:i-fhn-numerics} except for (step size) = $0.01$ and (number of iterations) = $5,000$.

\begin{figure}[h]
\centering
\includegraphics[width=7.5cm]{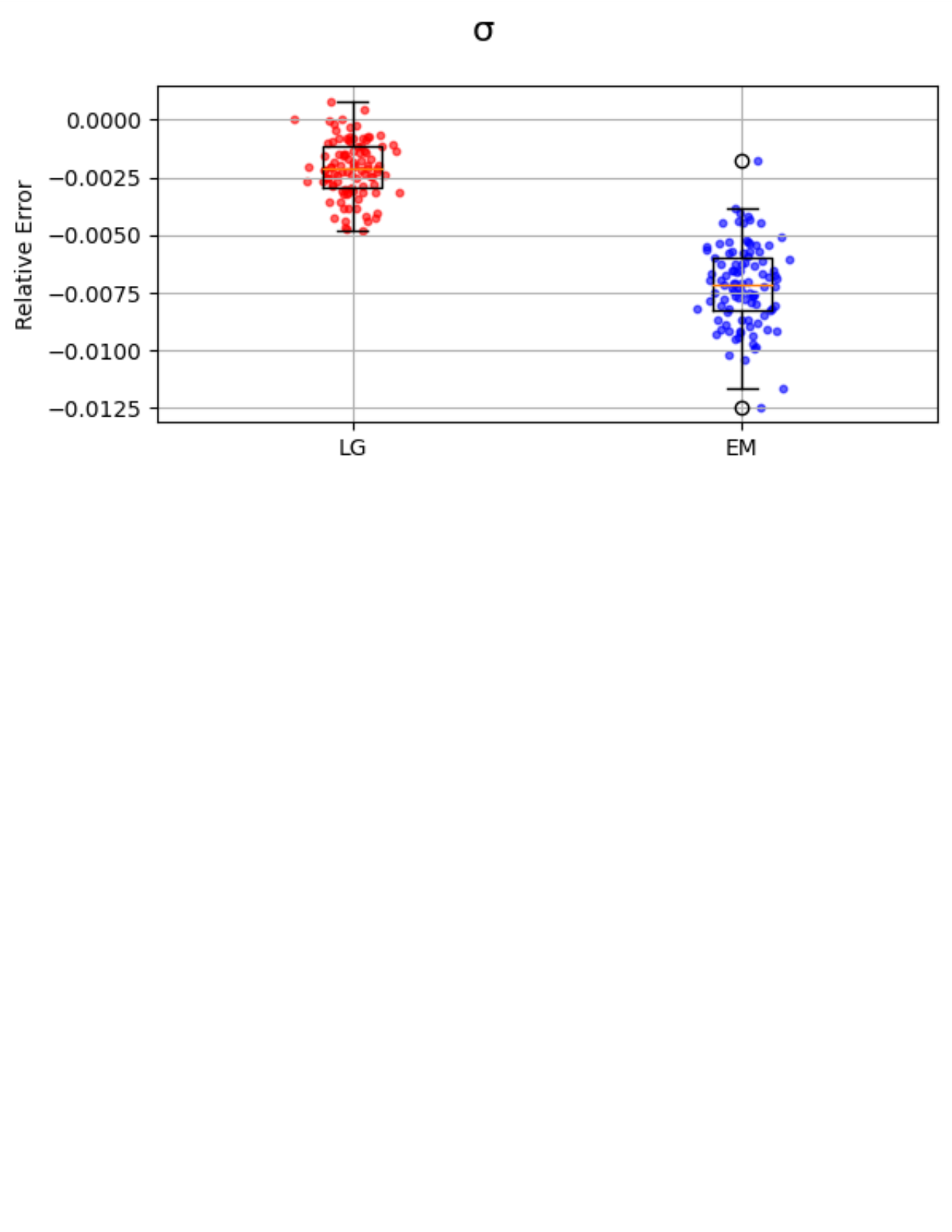}
\caption{Boxplots of $100$ realisations of estimators of $\sigma$ (for the LG- and EM-based methods, on the left and right, resp.) for the interacting underdamped Langevin equation (\ref{eq:I-UL}) under the complete observation regime, for Design I.}
\label{fig:sigma}
\end{figure}  

In Fig.~\ref{fig:sigma}, we plot the relative discrepancies of diffusion estimators from two approaches ((\ref{eq:LG-1}) and EM approximation) with the complete observations $D_{\mathrm{C}}^{n, N}$ under the experimental design I. These plots demonstrate that the standard deviation (stddev) of the EM-based estimator is larger than that of the LG-based one. Indeed, the stddev of the former is $0.00088$, which is approximately that of the latter (0.00061) multiplied by $\sqrt{2}$, as expected by the arguments in Remark \ref{rem:diff_param}. We also note that for the drift parameters, estimators from the two approaches exhibit quite similar performance in terms of bias and variance, in this experiment. 
Table \ref{table:ips_ul} summarizes the mean and the stddev of relative discrepancies for the two competitive estimators from the partial particle trajectories $D_{\mathrm{P}}^{n, N}$. We observe that both type of estimators achieve nearly the same standard deviation, but the centre of realisations for the LG-based estimators is closer to the true value compared to those from EM-based ones for all parameters. As the data step-size decreases (from Design I to II), the bias is improved for both estimators, though the LG-based estimators still perform better, i.e.~the EM-based estimators require a smaller data step-size for bias reduction. We expect this is due to scheme (\ref{eq:LG-1}) utilising a higher order stochastic Taylor expansion for $X_{S, t}^{[i]}$, so this increased precision results in a more accurate approximation of the target (perfect) objective function or marginal likelihood for a given step-size $\Delta_n$.    

\begin{table}
\begin{center} 
\setlength{\extrarowheight}{2pt}
\caption{Parameter estimation of the interacting underdamped Langevin equation under the partial observation regime: Mean and standard deviation (in brackets) of relative discrepancies for $\hat{\theta}_{\mathrm{P}}^{n, N}$ from $100$ trajectories.} 
\label{table:ips_ul} 
\begin{tabular}{cccccc} 
\toprule 
{Design} & {Approach} & $\lambda$ & $\gamma$ & $\kappa$ & $\sigma$ \\ 
\midrule
\multirow{2}{*}{I} & LG (our work) & -0.0087	(0.03776) & 0.0204 (0.04176) & -0.0030  (0.07553) & -0.0036 (0.00087)	\\ 
& EM \citep{amo:24} & -0.0221 (0.03719) & 0.0775 (0.04109) & -0.0164	(0.07436) & -0.0104 (0.00091) \\[0.1cm] 
\hline \\[-0.4cm]
\multirow{2}{*}{II} & LG (our work) & -0.0054 (0.03787) & 0.0136 (0.04206) & 0.0004 (0.07584) & 0.0016 (0.0006) \\ 
& EM \citep{amo:24} & -0.0119 (0.03761) & 0.0388 (0.04137) & -0.0062	(0.07527) & -0.0028 (0.00064) \\ 
\bottomrule
\end{tabular} 
\end{center} 
\end{table}

\section{Proofs} \label{sec:pfs}

\subsection{Preliminaries} \label{sec:pre}
We first prepare some notation for the proofs in the next subsections. We assume that the data $\{ \mathbb{X}^N_t \}_{t \in \mathbb{T}_n}$ obeys dynamics determined by the true $\trueparam \in \Theta$. Recalling that the matrix $\Sigma$ defined in \eqref{eq:Sigma} is invertible under Assumption \ref{ass:hor} due to Lemma \ref{lemma:inv_Sigma}, we write the inverse matrix of $\Sigma$ as:
\begin{align*}
\Sigma^{-1}=: \Lambda =
\begin{bmatrix}
\Lambda_{SS} & \Lambda_{SR} \\ 
\Lambda_{RS} & \Lambda_{RR}  
\end{bmatrix},  \qquad \Lambda: \Theta \times \mathbb{R}^d \times \mathcal{P}_2 (\mathbb{R}^d) \to \mathbb{R}^{d \times d},  
\end{align*}
with 
$ 
\Lambda_{SS} : \Theta \times \mathbb{R}^d \times \mathcal{P}_2 (\mathbb{R}^d) \to \mathbb{R}^{d_S \times d_S}, \,
\Lambda_{SR} : \Theta \times \mathbb{R}^d \times \mathcal{P}_2 (\mathbb{R}^d) \to \mathbb{R}^{d_S \times d_R}, \, 
\Lambda_{RR} : \Theta \times \mathbb{R}^d \times \mathcal{P}_2 (\mathbb{R}^d) \to \mathbb{R}^{d_R \times d_R}$ 
and $\Lambda_{RS} = \Lambda_{SR}^\top$.  We write $\mu_t^N = \mu_t^{N, \trueparam}$ and $\mu_t \equiv \mu_t^{\trueparam}$, where we recall that $\mu_t^\theta$ is the law of the McKean-Vlasov SDE (\ref{eq:mv_I}) with the parameter value $\theta \in \Theta$. Recall the definition of $\mathbf{m}_{j-1}^{[i],\theta}$ in (\ref{eq:def_m}). We change the notation as $\mathbf{m}_{j-1}^{[i],\theta}\leftrightarrow \mathbf{m}_{j-1}^{[i]} (\theta)$, and we write, for $1 \le i \le N, \, 1 \le j \le n$:
$ 
\mathbf{m}_{j-1}^{[i]} (\theta)
= 
\bigl[\mathbf{m}_{j-1}^{[i], 1} (\theta), \ldots,  \mathbf{m}_{j-1}^{[i], d} (\theta) \bigr]^\top. 
$ 
We set $\mathbf{m}_{j-1}^{[i]} (\theta) = (\mathbf{m}_{S, j-1}^{[i]} (\theta), \mathbf{m}_{R, j-1}^{[i]} (\theta) ) \in \mathbb{R}^{d_S} \times \mathbb{R}^{d_R}$ for model class (\ref{eq:ips-1}). 
Recall from (\ref{eq:tripl}), that $Z_{j}^{[i], \theta} = (\theta, \sample{j}{i}, \mu_{t_j}^{N})$ and write $Z_j^{[i]} \equiv Z_j^{[i], \trueparam}$, for $ 1\le i \le N, \, 0  \le j \le n$.  
%
%
For $f : \Theta \times \mathbb{R}^d \times \mathcal{P}_2 (\mathbb{R}^d) \to \mathbb{R}$,  we write $f (Z_j^{[i], \theta}) \equiv f (\theta, \sample{j}{i}, \mu_{t_j}^{N})$, and with a slight abuse of notation we set:  
\begin{gather*}
V_{S, 0} (Z_j^{[i], \theta}) 
\equiv V_{S, 0} ( \alpha_S, \sample{j}{i}), \quad  
V_{R, 0} (Z_j^{[i], \theta} ) 
\equiv V_{R, 0} ( \alpha_R, \sample{j}{i}, \mu_{t_j}^{N}); \\ 
V_{R, k}  (Z_t^{[i], \theta} ) 
\equiv V_{R, k} ( \beta, \sample{j}{i}, \mu_{t_j}^{N}), \quad 1 \le k \le d_B.  
\end{gather*}  
We also introduce for $\theta \in \Theta$ and $1 \le j \le n$:
\begin{align*}
\Sigma_{j-1}^{[i]} (\theta) \equiv \Sigma (Z_{j-1}^{[i], \theta}) ,  \qquad
\Lambda_{j-1}^{[i]} (\theta) \equiv  \bigl( \Sigma_{j-1}^{[i]} (\theta) \bigr)^{-1} 
\equiv 
\begin{bmatrix}
\Lambda_{SS, j-1}^{[i]} (\theta)  & \Lambda_{SR, j-1}^{[i]} (\theta) \\[0.1cm] 
\Lambda_{RS, j-1}^{[i]} (\theta)  & \Lambda_{RR, j-1}^{[i]} (\theta) 
\end{bmatrix}.  
\end{align*} 
We denote by $\mathcal{R}$ a set of functions $f : 
\Theta \times \mathbb{R}^d \times \mathcal{P}_2 (\mathbb{R}^d) \to \mathbb{R}$ satisfying the following three properties: (i) There exist $C > 0$ and $\ell_1, \ell_2 = 0,1 \ldots$ such that for all $(y_1, \nu_1), (y_2, \nu_2) \in \mathbb{R}^d \times \mathcal{P}_{\ell} (\mathbb{R}^d)$ and all $\theta \in \Theta$:  
\begin{align} 
\label{eq:f_bd} & | f (\theta, y_1, \nu_1) - f (\theta, y_2, \nu_2) |  \\
& \quad 
\le C \bigl( |y_1 - y_2| + \mathcal{W}_2 (\nu_1, \nu_2) \bigr)
\bigl(1 + |y_1|^{\ell_1} + |y_2|^{\ell_1} + \mathcal{W}_{2}(\nu_1, \delta)^{\ell_2}  + \mathcal{W}_{2}(\nu_2, \delta)^{\ell_2} \bigr);  
\nonumber
\end{align}
(ii) For all $\theta \in \Theta$, $(y, t) \mapsto f (\theta, y, \mu_t)$ is integrable w.r.t. $\mu_t (dy) \times dt$ over $\mathbb{R}^d \times [0, T]$; 
%

\noindent(iii) There exist constants $C > 0$, $k_1, k_2 \in \{0, 1, \ldots\}$ such that for all $1 \le j \le d_\theta$ and all $(\theta, x, \mu) \in \Theta \times \mathbb{R}^d \times \mathcal{P}_2 (\mathbb{R}^d)$: 
\begin{align} \label{eq:R_3}
\bigl| \partial_{\theta_j} 
f (\theta, x, \mu) \bigr| \le  C ( 1 + |x|^{k_1} + \mathcal{W}_2 (\mu, \delta_0)^{k_2}).  
\end{align} 

\noindent 
\textit{Two Key Convergence Results.}
We present two convergence results that are crucial for proving Theorems \ref{thm:consistency} and \ref{thm:clt}, particularly in examining the limits of scaled contract functions as $n, N \to \infty$:  %
\begin{lemma} \label{lemma:base_conv}
Let $f \in \mathcal{R}$. It holds under Assumptions \ref{ass:initial_dist}, \ref{ass:growth} that: 
\begin{align} \label{eq:conv_aux}
\tfrac{\Delta_n}{N} \sum_{1 \le i \le N} \sum_{1 \le j \le n} f (\theta, X_{t_{j-1}}^{[i]}, \mu_{t_{j-1}}^N) \probconv 
\int_0^T \mathbb{E}_{\mu_t} 
\bigl[ f (\theta, W, \mu_t) \bigr] dt, \quad n, N \to \infty,    
\end{align}
uniformly in $\theta \in \Theta$. 
\end{lemma}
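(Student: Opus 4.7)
The plan is to reduce the claim to a Riemann-sum approximation of the limit by first converting the double sum into a spatial empirical average and then replacing the empirical measure by the McKean--Vlasov law. Setting $G(\theta,t):=\mathbb{E}_{\mu_t}[f(\theta,W,\mu_t)]$, write the left-hand side of \eqref{eq:conv_aux} as $S_{n,N}(\theta):=\Delta_n\sum_{j=1}^n\bigl\{\tfrac{1}{N}\sum_{i=1}^N f(\theta,X_{t_{j-1}}^{[i]},\mu_{t_{j-1}}^N)\bigr\}$ and decompose
\begin{equation*}
S_{n,N}(\theta) - \int_0^T G(\theta,t)\,dt \;=\; A_{n,N}(\theta) + R_n(\theta),
\end{equation*}
where $R_n(\theta):=\Delta_n\sum_{j=1}^n G(\theta,t_{j-1}) - \int_0^T G(\theta,t)\,dt$ is a pure Riemann-sum remainder and $A_{n,N}(\theta):=\Delta_n\sum_{j=1}^n\bigl\{\tfrac{1}{N}\sum_{i=1}^N f(\theta,X_{t_{j-1}}^{[i]},\mu_{t_{j-1}}^N) - G(\theta,t_{j-1})\bigr\}$. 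The term $R_n(\theta)$ vanishes as $n\to\infty$ from continuity of $t\mapsto G(\theta,t)$, which follows from 2-Wasserstein continuity of $t\mapsto\mu_t$ (standard for \eqref{eq:mv_I} under Assumption \ref{ass:growth}) combined with the Lipschitz-type bound \eqref{eq:f_bd}.

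The heart of the argument is the control of $A_{n,N}(\theta)$ via propagation of chaos. I would introduce the usual synchronous coupling: let $\bar X^{[i]}$ solve the McKean--Vlasov SDE \eqref{eq:mv_I} under $\trueparam$, driven by the same Brownian motion $B^{[i]}$ and the same initial condition as $X^{[i]}$, so that the $\bar X^{[i]}$ are i.i.d.\ with $\bar X_t^{[i]}\sim\mu_t$. Under Assumptions \ref{ass:initial_dist} and \ref{ass:growth}, classical estimates yield uniform moment bounds $\sup_{t\le T}\mathbb{E}[|X_t^{[i]}|^m+|\bar X_t^{[i]}|^m]<\infty$ for all $m\ge 1$ and the coupling rates $\sup_{t\le T}\mathbb{E}[|X_t^{[i]}-\bar X_t^{[i]}|^2]\to 0$ and $\sup_{t\le T}\mathbb{E}[\mathcal{W}_2(\mu_t^N,\mu_t)^2]\to 0$ as $N\to\infty$. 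Applying \eqref{eq:f_bd} together with Cauchy--Schwarz then gives
\begin{equation*}
\mathbb{E}\bigl[\bigl|f(\theta,X_{t_{j-1}}^{[i]},\mu_{t_{j-1}}^N) - f(\theta,\bar X_{t_{j-1}}^{[i]},\mu_{t_{j-1}})\bigr|\bigr] \longrightarrow 0,
\end{equation*}
uniformly in $j$ and $\theta$. The residual piece $\tfrac{1}{N}\sum_i f(\theta,\bar X_{t_{j-1}}^{[i]},\mu_{t_{j-1}}) - G(\theta,t_{j-1})$ is the centred sample mean of i.i.d.\ variables with variance $\mathcal{O}(1/N)$, so after multiplication by $\Delta_n$ and summation over $j\le n$ with $n\Delta_n=T$ it tends to zero in $L^2$.

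To upgrade pointwise convergence in probability to uniform convergence in $\theta\in\Theta$, I would exploit property \eqref{eq:R_3}: it implies that both $\theta\mapsto S_{n,N}(\theta)$ and $\theta\mapsto\int_0^T G(\theta,t)\,dt$ are Lipschitz in $\theta$ with a random modulus bounded in $L^1$ uniformly in $(n,N)$, using the uniform moment bounds on $X_{t_j}^{[i]}$ and $\mathcal{W}_2(\mu_{t_j}^N,\delta_0)$. Combining this equicontinuity with pointwise convergence in probability, through a standard finite-cover argument on the compact set $\Theta$, yields the required uniformity in $\theta$.

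The main obstacle I anticipate is the interplay between the polynomial growth factors in \eqref{eq:f_bd} and the propagation-of-chaos coupling: the Lipschitz bound produces products such as $|X-\bar X|\,(1+|X|^{\ell_1}+\mathcal{W}_2(\mu^N,\delta_0)^{\ell_2})$, so one must carefully track all $L^p$-moments of the particle systems and of $\mathcal{W}_2(\mu^N,\delta_0)$ uniformly in $N$ and $t\in[0,T]$. This is precisely where Assumptions \ref{ass:initial_dist} and \ref{ass:growth} (all moments on $\mu_0$, one-sided Lipschitz drift with polynomial growth, globally Lipschitz diffusion) enter decisively; once this moment control is secured, the remaining steps proceed along standard lines.
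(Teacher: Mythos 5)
Your proof is correct and uses the same fundamental ingredients as the paper's---synchronous coupling with the McKean--Vlasov limit system, propagation-of-chaos rates, uniform moment bounds, the LLN for i.i.d.\ samples from $\mu_t$, and a Lipschitz-in-$\theta$ tightness argument---but it groups them via a different decomposition. The paper writes the error as $F_1 + F_2 + F_3$ with $F_1$ the temporal discretization error \emph{on the stochastic process} (bounded directly via $\|X_{t_{j-1}}^{[i]} - X_t^{[i]}\|_2 \le C\Delta_n^{1/2}$ from Lemma~\ref{lemma:bds_aux} and the Lipschitz bound \eqref{eq:f_bd}), $F_2$ the coupling error, and $F_3$ the LLN term; all three are formulated in terms of time-continuous integrals. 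You instead isolate the per-time-point discrepancy $A_{n,N}$ (handling coupling and LLN at each discrete $t_{j-1}$) plus a purely deterministic Riemann remainder $R_n$, which you bound by appealing to continuity of $t\mapsto G(\theta,t)=\mathbb{E}_{\mu_t}[f(\theta,W,\mu_t)]$. Both decompositions lead to the same conclusion, but the paper's is slightly more economical: it bounds the temporal error quantitatively from the increment estimate already established in Lemma~\ref{lemma:bds_aux}, whereas your route requires a separate (standard, but additional) argument establishing $\mathcal{W}_2$-continuity of $t\mapsto\mu_t$ to conclude that $G(\theta,\cdot)$ is continuous. Your uniformity argument (Lipschitz modulus in $\theta$ uniformly bounded in $L^1$ from property~\eqref{eq:R_3}, then a finite cover over compact $\Theta$) is essentially the same as the paper's tightness-in-$C(\Theta;\mathbb{R})$ argument.
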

\begin{lemma}  \label{lemma:key_conv}
Let Assumptions \ref{ass:law_dep}--\ref{ass:growth} hold and $g \in \mathcal{R}$. Then, for any $k_1, k_2 = 1, \ldots, d$, we have that:
\begin{align}
& \tfrac{\sqrt{\Delta_n}}{N} 
\sum_{1 \le i \le N} \sum_{1 \le j \le n}  
g (Z_{j-1}^{[i], \theta}) \mathbf{m}_{j-1}^{[i], k_1} (\trueparam)  \probconv 0; \label{eq:aux_conv_1} \\[0.3cm]
& 
\begin{aligned} 
& \tfrac{\Delta_n}{N}  
\sum_{1 \le i \le N} \sum_{1 \le j \le n} 
g (Z_{j-1}^{[i], \theta})  
\mathbf{m}_{j-1}^{[i], k_1} (\trueparam) \mathbf{m}_{j-1}^{[i], k_2} (\trueparam) \\[-0.1cm] 
& \qquad \probconv  
\int_0^T 
\mathbb{E}_{\mu_t} \bigl[ 
g (\theta, W, \mu_t) 
\Sigma_{k_1 k_2} (\trueparam, W, \mu_t) \bigr] dt,    
\end{aligned} 
\label{eq:aux_conv_2} 
\end{align} 
as $n, N \to \infty$, uniformly in $\theta \in \Theta$. 
\end{lemma}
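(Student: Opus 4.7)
The plan is to reduce both limits to Itô–Taylor expansions of $\mathbf{m}_{j-1}^{[i]}(\trueparam)$ under $\mathbb{P}_{\trueparam}$, and then, after conditioning on $\mathcal{F}_{t_{j-1}}^N$, to split each sum into a predictable (conditional-mean) part handled by Lemma~\ref{lemma:base_conv} and a centered part controlled by a variance bound that exploits approximate independence across particles on each interval $[t_{j-1}, t_j]$.

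First I would derive the key conditional-moment expansions. A first-order Itô–Taylor expansion of the rough drift inside the rough increment gives
\begin{align*}
\mathbb{E}\bigl[\mathbf{m}_{R, j-1}^{[i], k}(\trueparam)\,\big|\,\mathcal{F}_{t_{j-1}}^N\bigr] = O(\Delta_n^{3/2}),
\end{align*}
while the $\mathscr{L}_0[V_{S,0}]\Delta_n^2/2$ correction built into $\mathbf{m}_{S, j-1}^{[i]}(\trueparam)$ is precisely the term that cancels the next deterministic order in Taylor, so an additional step yields $\mathbb{E}[\mathbf{m}_{S, j-1}^{[i], k}(\trueparam)\,|\,\mathcal{F}_{t_{j-1}}^N] = O(\Delta_n^{3/2})$ as well. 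For the second-order moments, expanding the stochastic parts of both components and using the Itô isometry together with the explicit form of $\Sigma$ in (\ref{eq:Sigma}) yields
\begin{align*}
\mathbb{E}\bigl[\mathbf{m}_{j-1}^{[i], k_1}(\trueparam)\mathbf{m}_{j-1}^{[i], k_2}(\trueparam)\,\big|\,\mathcal{F}_{t_{j-1}}^N\bigr] = \Sigma_{k_1 k_2}(Z_{j-1}^{[i]}) + r_{j-1}^{[i], k_1 k_2},
\end{align*}
where $|r_{j-1}^{[i], k_1 k_2}|$ is bounded by $\sqrt{\Delta_n}$ times a polynomial in $|X_{t_{j-1}}^{[i]}|$ and $\mathcal{W}_2(\mu_{t_{j-1}}^N, \delta_0)$. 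The regularity in (\ref{eq:f_bd})--(\ref{eq:R_3}) and the moment control from Assumption~\ref{ass:initial_dist} (propagated to $\mu_{t_{j-1}}^N$) make these residuals uniformly small in every $L^p$, uniformly in $\theta$.

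For (\ref{eq:aux_conv_1}), I decompose $\mathbf{m}_{j-1}^{[i], k_1}(\trueparam) = \overline{\mathbf{m}}_{j-1}^{[i], k_1} + \mathbb{E}[\mathbf{m}_{j-1}^{[i], k_1}(\trueparam)\,|\,\mathcal{F}_{t_{j-1}}^N]$, with $g(Z_{j-1}^{[i],\theta})$ being $\mathcal{F}_{t_{j-1}}^N$-measurable. The bias contribution is $L^1$-bounded by $C\sqrt{\Delta_n}\cdot n\cdot \Delta_n^{3/2}=CT\Delta_n\to 0$. The centered part is a scaled martingale-difference array in $j$, so
\begin{align*}
\mathbb{E}\Bigl[\Bigl(\tfrac{\sqrt{\Delta_n}}{N}\sum_{i,j}g(Z_{j-1}^{[i],\theta})\overline{\mathbf{m}}_{j-1}^{[i], k_1}\Bigr)^{\!2}\,\Bigr]
\le \tfrac{\Delta_n}{N^2}\sum_{j=1}^n\mathbb{E}\Bigl[\Bigl(\sum_{i=1}^N g(Z_{j-1}^{[i],\theta})\overline{\mathbf{m}}_{j-1}^{[i], k_1}\Bigr)^{\!2}\,\Bigr].
\end{align*}
The inner expectation is controlled by conditioning on $\mathcal{F}_{t_{j-1}}^N$: to leading order, $\overline{\mathbf{m}}_{j-1}^{[i], k_1}$ is a stochastic integral against the particle-specific $B^{[i]}$ with $\mathcal{F}_{t_{j-1}}^N$-measurable coefficients, so the conditional covariances between different particles across $[t_{j-1}, t_j]$ are of lower order in $\Delta_n$. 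This yields an $O(nN)$ bound on the inner expectation and an overall variance of order $T/N \to 0$. For (\ref{eq:aux_conv_2}), the $\Sigma_{k_1 k_2}(Z_{j-1}^{[i]})$ piece is handled by Lemma~\ref{lemma:base_conv} applied to $(\theta, x, \mu)\mapsto g(\theta, x, \mu)\Sigma_{k_1 k_2}(\trueparam, x, \mu)$, which inherits membership in $\mathcal{R}$ from the smoothness of $V_R$ and $V_{S,0}$; the $r$-piece contributes $O(\sqrt{\Delta_n})\to 0$ in $L^1$; and the centered martingale part has variance $O(\Delta_n/N)\to 0$ by the same computation as above. Uniformity in $\theta$ follows from the pointwise convergences together with equicontinuity: property~(iii) in the definition of $\mathcal{R}$ gives polynomial bounds on $\partial_\theta g$, which with the uniform particle-moment bounds and compactness of $\Theta$ yields a tight Lipschitz modulus in $\theta$ for all relevant sums.

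The main technical obstacle will be showing that the cross-particle couplings induced by $\mu_s^N$ on $[t_{j-1}, t_j]$ contribute only to higher-order terms in the second-moment estimates. This is where the hypoelliptic structure enters: the smooth-coordinate contribution $\overline{\mathbf{m}}_{S, j-1}^{[i]}$ is dominated by $\Delta_n^{-3/2}\sum_{\ell\ge 1}\mathscr{L}_\ell[V_{S,0}](Z_{j-1}^{[i]})\int_{t_{j-1}}^{t_j}(B_{\ell,u}^{[i]}-B_{\ell,t_{j-1}}^{[i]})\,du$, and one must confirm that, since the coefficient $\mathscr{L}_\ell[V_{S,0}](Z_{j-1}^{[i]})$ is $\mathcal{F}_{t_{j-1}}^N$-measurable, the leading Gaussian parts of $\overline{\mathbf{m}}^{[i]}$ and $\overline{\mathbf{m}}^{[\ell]}$ are conditionally independent for $i\neq \ell$ and that the measure-driven remainder is dominated. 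This step replaces the standard i.i.d.~structure available for non-interacting SDEs.
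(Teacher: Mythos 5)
Your argument is correct and coincides with the paper's own proof: both split $\mathbf{m}_{j-1}^{[i]}$ into its conditional mean given $\mathcal{F}_{t_{j-1}}^N$ plus a centered remainder, establish the conditional first- and second-moment expansions (the paper records these as Lemma~\ref{lemma:moments}, including the crucial $\mathbf{1}_{i_1=i_2}$ structure that encodes the approximate cross-particle independence you flag as the main obstacle), bound the bias and variance contributions, invoke Lemma~\ref{lemma:base_conv} to identify the limit of the predictable part, and obtain uniformity in $\theta$ via tightness from the derivative bound in property (iii) of $\mathcal{R}$. The only cosmetic difference is that the paper packages the variance-bound step as an application of the Genon--Catalot and Jacod martingale limit lemma (\cite[Lemma 9]{genon:93}) rather than carrying the Chebyshev computation out by hand as you do, and its second-moment remainder is recorded at order $\Delta_n$ rather than your weaker $\sqrt{\Delta_n}$; both suffice.
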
 
The first result, Lemma \ref{lemma:base_conv}, is used and obtained in the works \cite{amo:24, amo:23} in the sense of point convergence (for fixed $\theta$), where they assumed the global Lipschitz condition on the drift function. We note that Lemma \ref{lemma:base_conv} is now established under a locally Lipschitz condition (Assumption \ref{ass:growth}) on the drift function with the aid of results on finite moments of IPS and McKean-Vlasov SDE, e.g. \cite{dos:22, dos:19}, which are stated in Lemma \ref{lemma:bds_aux} in the Supplementary Material \cite{supp_AoS}. Then, the proof of Lemma \ref{lemma:base_conv} is contained in Appendix \ref{sec:pf_base_conv} in the same material. 
Lemma \ref{lemma:key_conv} also plays a critical role when carrying out asymptotic analysis of the proposed contrast function. The proof of Lemma \ref{lemma:key_conv} is provided in Appendix \ref{sec:pf_key_conv} in the Supplementary Material \cite{supp_AoS}. \\ 

\noindent 
\textit{Decomposition of Contrast Function.} 
The fundamental procedure for proving the asymptotic results (Theorems \ref{thm:consistency}, \ref{thm:clt}) involves decomposing the contrast function (\ref{eq:contrast}), scaled by an appropriate factor, into several components. Subsequently, we apply the above two convergence results (Lemmas \ref{lemma:base_conv}, \ref{lemma:key_conv}) to these components under Assumptions \ref{ass:law_dep}-\ref{ass:reg_coeff}. 
%
In particular, we often use the following decomposition:
\begin{align}
\ell_{n, N} (\theta)  = \sum_{1 \le k \le 4} 
L^{(k)}_{n, N} (\theta) \label{eq:contrast_decomp}
\end{align}
with constituent components:
\begin{align*}
L^{(1)}_{n, N} (\theta) 
& = \sum_{1 \le i \le N} \sum_{1 \le j \le n} \mathbf{m}_{j-1}^{[i]} (\trueparam)^\top \Lambda_{j-1}^{[i]} (\theta) \mathbf{m}_{j-1}^{[i]} (\trueparam); \\ 
L^{(2)}_{n, N} (\theta) 
& = 2 \sum_{1 \le i \le N} \sum_{1 \le j \le n} \bigl( \mathbf{m}_{j-1}^{[i]} (\theta) 
- \mathbf{m}_{j-1}^{[i]} (\trueparam)  
\bigr)^\top \Lambda_{j-1}^{[i]} (\theta) \mathbf{m}_{j-1}^{[i]} (\trueparam); \\ 
L^{(3)}_{n, N} (\theta) 
& = \sum_{1 \le i \le N} \sum_{1 \le j \le n} 
\bigl( \mathbf{m}_{j-1}^{[i]} (\theta) 
- \mathbf{m}_{j-1}^{[i]} (\trueparam)  
\bigr)^\top \Lambda_{j-1}^{[i]} (\theta)
\bigl( \mathbf{m}_{j-1}^{[i]} (\theta) 
- \mathbf{m}_{j-1}^{[i]} (\trueparam)  
\bigr); \\  
L^{(4)}_{n, N} (\theta)   
& =  \sum_{1 \le i \le N} \sum_{1 \le j \le n} 
\log \det \Sigma_{j-1}^{[i]} (\theta). 
\end{align*} 
Furthermore, the following expression for the term $\mathbf{m}_{j-1}^{[i]} (\theta)  - \mathbf{m}_{j-1}^{[i]} (\trueparam)$ is crucial in our analysis:
\begin{align}
\label{eq:m_diff}   
&\mathbf{m}_{j-1}^{[i]} (\theta)  - \mathbf{m}_{j-1}^{[i]} (\trueparam)   = \tfrac{1}{\sqrt{\Delta_n}}
\begin{bmatrix}
b_{S} (\alpha_S, X_{t_{j-1}}^{[i]})  \\[0.1cm]
\mathbf{0}_{d_R} 
\end{bmatrix} \\
& \qquad \qquad \qquad + \sqrt{\Delta_n} 
\begin{bmatrix}
\tfrac{1}{2} \mathscr{L}_0 
[V_{S, 0 } (\alpha_S^\dagger, \cdot)] (Z_{j-1}^{[i], \trueparam})
- 
\tfrac{1}{2} \mathscr{L}_0  
[V_{S, 0 } (\alpha_S, \cdot)] (Z_{j-1}^{[i], \theta}) \\[0.2cm] 
b_{R} (\alpha_R, X_{t_{j-1}}^{[i]}, \mu_{t_{j-1}}^N)
\end{bmatrix},
\nonumber 
\end{align}
where we have set, 
for $(\alpha_S, \alpha_R) \in \Theta_{\alpha_S} \times \Theta_{\alpha_R}$,   
$(x , \mu) \in \mathbb{R}^d \times \mathcal{P}_2 (\mathbb{R}^d)$: 
\begin{align*} 
\begin{bmatrix}
b_S  (\alpha_S, \mu)  \\[0.1cm]
b_R  (\alpha_R , x, \mu) 
\end{bmatrix}
= 
\begin{bmatrix}
V_{S, 0} (\alpha_S^\dagger,  x) - V_{S, 0} (\alpha_S,  x)  \\[0.1cm]
V_{R, 0} (\alpha_R^\dagger,  x, \mu) - V_{R, 0} (\alpha_R,  x, \mu)  
\end{bmatrix}.
\end{align*}  

\subsection{Proof of Theorem \ref{thm:consistency}}  \label{sec:pf_consistency}
We show the consistency for $d_S \ge 1$, i.e. an interacting hypo-elliptic diffusion. 
We carefully extend the arguments used in the case of linear in the sense of McKean hypoelliptic SDEs 
in \citep{igu:ejs, igu:bj} to our setting of an IPS. We emphasize here that the proof strategy differs from that in \cite{amo:24} because our case treats a more general smooth drift function $V_{S, 0} (\alpha_S, X_{t}^{[i]})$ and relies on the locally Gaussian approximation (\ref{eq:LG-1}). More precisely, due to the inclusion of Gaussian noise of size $\mathcal{O} (\Delta_n^{3/2})$  in approximation of the smooth coordinates or the existence of $\Delta_n^{3/2}$ in the denominator of $\mathbf{m}_S$ in the contrast function (\ref{eq:contrast}), one must carefully analyze the (scaled) contrast function as $\Delta_n \to 0$ to avoid explosions. Our proof of consistency contains four main steps, and in the process (\textbf{Step II.} below), we derive a convergence rate of the estimator $\hat{\alpha}_S^{n, N}$ that is effectively used in the rest of the proof to control the asymptotic behavour of scaled contrast function. The elliptic case, i.e. $d_S = 0$, can be shown by skipping the first two steps without relying on the convergence rate.  

We now prove the consistency of the proposed estimator through the following four steps: \\
\noindent 
\textbf{Step I.} We start by showing that $\hat{\alpha}_S^{n, N}  \probconv \alpha_S^\dagger$ as $n, N \to \infty$. In particular, we prove the following lemma: 

\begin{lemma} \label{lemma:step-1}
Under Assumptions \ref{ass:law_dep}--\ref{ass:reg_coeff} it holds that: 
\begin{align*}
\sup_{\theta \in \Theta}\, 
\Bigl|   \tfrac{\Delta_n^2}{N}  \ell_{n, N} (\theta)  - \mathcal{Q} (\theta) \Bigr| \probconv 0, \qquad 
n , N \to \infty,
\end{align*}
where we have defined: 
\begin{align} 
\label{eq:limit_Q}
\mathcal{Q} (\theta) = \int_0^T \mathbb{E}_{\mu_t} 
\bigl[  b_S (\alpha_S, W)^\top \Lambda_{SS} (\theta, W, \mu_t^{\dagger})\,b_S (\alpha_S, W)  \bigr] dt.  
\end{align} 
\end{lemma}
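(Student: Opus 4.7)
The plan is to use the four-term decomposition $\ell_{n,N}(\theta) = \sum_{k=1}^{4} L^{(k)}_{n,N}(\theta)$ in~\eqref{eq:contrast_decomp} and argue that only $L^{(3)}_{n,N}$ survives the rescaling by $\Delta_n^2/N$, yielding the limit $\mathcal{Q}(\theta)$. The factor $\Delta_n^2/N$ is precisely tuned to cancel the $\Delta_n^{-1/2}$-explosion carried by the smooth block of $\mathbf{m}_{j-1}^{[i]}(\theta)-\mathbf{m}_{j-1}^{[i]}(\trueparam)$ in~\eqref{eq:m_diff}, which drives the whole argument.

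Substituting~\eqref{eq:m_diff} into $L^{(3)}_{n,N}$ and using the block form of $\Lambda$, the quadratic form splits into a term of order $\Delta_n^{-1}$ (only the SS block appears, since the rough entry of the explosive vector vanishes), a term of order $1$, and a term of order $\Delta_n$. After multiplication by $\Delta_n^2/N$, the $\Delta_n^{-1}$-term becomes
\begin{align*}
\tfrac{\Delta_n}{N}\sum_{i,j} b_S(\alpha_S, X_{t_{j-1}}^{[i]})^\top \Lambda_{SS}(\theta, X_{t_{j-1}}^{[i]}, \mu_{t_{j-1}}^{N})\,b_S(\alpha_S, X_{t_{j-1}}^{[i]}),
\end{align*}
to which Lemma~\ref{lemma:base_conv} applies because $(\theta,x,\mu)\mapsto b_S(\alpha_S,x)^\top \Lambda_{SS}(\theta,x,\mu) b_S(\alpha_S,x)$ lies in $\mathcal{R}$ under Assumptions~\ref{ass:law_dep}--\ref{ass:reg_coeff} together with Lemma~\ref{lemma:inv_Sigma}; the resulting uniform-in-$\theta$ limit is exactly $\mathcal{Q}(\theta)$. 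The other two pieces of $L^{(3)}_{n,N}$ carry extra factors of $\Delta_n$ and $\Delta_n^2$, respectively, and are thus uniformly negligible after one further application of Lemma~\ref{lemma:base_conv}.

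For the three remaining contributions I would argue as follows. First, $\frac{\Delta_n^2}{N} L^{(4)}_{n,N} = \Delta_n \cdot \frac{\Delta_n}{N}\sum \log\det\Sigma_{j-1}^{[i]}(\theta)$ is $\Delta_n$ times a uniformly convergent sum (via Lemma~\ref{lemma:base_conv} applied to $\log\det\Sigma$, which is in $\mathcal{R}$ by Lemma~\ref{lemma:inv_Sigma} and the polynomial bounds of Assumption~\ref{ass:reg_coeff}), hence $o_\mathbb{P}(1)$ uniformly. Second, $\frac{\Delta_n^2}{N} L^{(1)}_{n,N}=\Delta_n \cdot \sum_{k_1,k_2}\frac{\Delta_n}{N}\sum \Lambda_{k_1 k_2}(\theta,\cdot)\,\mathbf{m}_{j-1}^{[i], k_1}(\trueparam) \mathbf{m}_{j-1}^{[i], k_2}(\trueparam)$, uniformly convergent by~\eqref{eq:aux_conv_2}, hence $o_\mathbb{P}(1)$ after the extra $\Delta_n$. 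Third, expanding $L^{(2)}_{n,N}$ via~\eqref{eq:m_diff} yields summands with prefactors $\Delta_n^{3/2}/N$ and $\Delta_n^{5/2}/N$, which I rewrite as $\Delta_n$ or $\Delta_n^2$ times $\frac{\sqrt{\Delta_n}}{N}\sum g(\theta,\cdot)\,\mathbf{m}_{j-1}^{[i], k}(\trueparam)$; each is $o_\mathbb{P}(1)$ uniformly by~\eqref{eq:aux_conv_1}.

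The main obstacle, in my view, is the careful bookkeeping required to verify that every integrand appearing above truly sits in the class $\mathcal{R}$, so that Lemmas~\ref{lemma:base_conv} and~\ref{lemma:key_conv} can legitimately be invoked with uniform-in-$\theta$ conclusions. This requires combining the polynomial growth and local Lipschitz bounds of Assumptions~\ref{ass:test_function}--\ref{ass:reg_coeff} with the uniform positive definiteness of $\Sigma$ from Lemma~\ref{lemma:inv_Sigma} (in order to bound the entries of $\Lambda = \Sigma^{-1}$ and their $\theta$-derivatives), and then checking the Lipschitz-in-$(x,\mu)$ estimate~\eqref{eq:f_bd} through the quotient rule. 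Once this bookkeeping is settled, the required $\sup_\theta$-convergence is inherited directly from the uniform-in-$\theta$ statements of both base convergence lemmas.
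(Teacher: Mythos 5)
Your proposal is correct and follows essentially the same route as the paper: the four-term decomposition of $\ell_{n,N}$ from~\eqref{eq:contrast_decomp}, substitution of~\eqref{eq:m_diff} into $L^{(3)}_{n,N}$ so that the $\Delta_n^{-1}$ block (the $b_S^\top\Lambda_{SS}b_S$ piece) is the sole survivor of the $\Delta_n^2/N$ rescaling, and Lemmas~\ref{lemma:base_conv}--\ref{lemma:key_conv} to send the remaining contributions to zero and extract the limit $\mathcal{Q}(\theta)$. The only cosmetic difference is that the paper absorbs all lower-order pieces from $L^{(3)}$ and $L^{(4)}$ into a single $\mathcal{R}$-class remainder $R^{(4)}_{n,N}$, whereas you list each suppressed term explicitly; the underlying argument is the same.
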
 

\noindent 
\textit{Proof of Lemma \ref{lemma:step-1}.} From (\ref{eq:contrast_decomp}), (\ref{eq:m_diff}) we get $\textstyle \tfrac{\Delta_n^2}{N} \ell_{n, N} (\theta) = \sum_{k = 1}^4 R^{(k)}_{n, N} (\theta)$ with: 
\begin{align*}
R^{(1)}_{n, N} (\theta)  
& =  \tfrac{\Delta_n^2}{N} L_{n, N}^{(1)} (\theta), \qquad
R^{(2)}_{n, N} (\theta)   
=  \tfrac{\Delta_n^2}{N} L_{n, N}^{(2)} (\theta) ; \\[0.2cm] 
R^{(3)}_{n, N} (\theta)   
& = \tfrac{\Delta_n}{N} 
\sum_{1 \le i \le N} \sum_{1 \le j \le n}  
b_{S} (\alpha_S, X_{t_{j-1}}^{[i]})^\top 
\Lambda_{SS, j-1}^{[i]} (\theta)  \,
b_{S} (\alpha_S, X_{t_{j-1}}^{[i]}), \\
R^{(4)}_{n, N} (\theta)   
&= \tfrac{\Delta_n^2}{N} 
\sum_{1 \le i \le N} \sum_{1 \le j \le n}   
\widetilde{R}^{(4)} (Z_{j-1}^{[i]}),    
\end{align*} 
where $\widetilde{R}^{(4)} \in \mathcal{R}$. We now use Lemmas \ref{lemma:base_conv}, \ref{lemma:key_conv} to obtain that, if $n, N \to \infty$ then 
$
{R}^{(3)}_{n, N} (\theta) \probconv \mathcal{Q} (\theta)$,
${R}^{(k)}_{n, N} (\theta) \probconv 0$,  $k  =  1, 2, 4, 
$
uniformly in $\theta \in \Theta$. We thus conclude Lemma \ref{lemma:step-1}. 
\qed
\\ 

Given Lemma \ref{lemma:step-1}, consistency for $\hat{\alpha}_S^{n, N}$ is established as follows. Under Assumption \ref{ass:hor}, the matrix $\Lambda_{SS}$ is positive-definite uniformly in $(\theta, x, \mu) \in \Theta \times \mathbb{R}^d \times \mathcal{P}_2 (\mathbb{R}^d)$. Then, under Assumption \ref{ass:ident} we have that for any $\varepsilon > 0$ there exists $\delta > 0$ such that
$
\mathbb{P}_{\trueparam} \bigl( |\hat{\alpha}_S^{n ,N} - \alpha_S^\dagger| > \varepsilon \bigr)
\le \mathbb{P}_{\trueparam} \bigl( 
\mathcal{Q} (\hat{\theta}^{n, N}) > \delta  \bigr).   
$
We also have that: 
\begin{align*}
\mathbb{P}_{\trueparam} &\bigl( 
\mathcal{Q} (\hat{\theta}^{n, N}) > \delta  \bigr) 
\le \mathbb{P}_{\trueparam} \Bigl( 
\tfrac{\Delta_n^2}{N} \ell_{n, N} (\alpha_S^\dagger, 
\hat{\alpha}_R^{n, N}, \hat{\beta}^{n, N})
-\tfrac{\Delta_n^2}{N} \ell_{n, N} (\hat{\theta}^{n, N})
+ \mathcal{Q} (\hat{\theta}^{n, N}) > \delta  \Bigr) 
\\ 
& \le 
\mathbb{P}_{\trueparam} \Bigl( 
\, \sup_{\theta \in \Theta} 
\bigl| 
\tfrac{\Delta_n^2}{N} \ell_{n, N} (\alpha_S^\dagger, 
{\alpha}_R, \beta)
-\tfrac{\Delta_n^2}{N} \ell_{n, N} (\theta)
+ \mathcal{Q} (\theta) 
\bigr| > \delta  \Bigr) 
\to 0, \qquad n, N \to \infty, 
\end{align*}
where in the last line we made use of Lemma \ref{lemma:step-1}. We thus conclude the consistency of $\hat{\alpha}_S^{n, N}$. 

\begin{remark} \label{rem:unique}
Lemma \ref{lemma:step-1} implies that optimising the scaled objective function $\tfrac{\Delta_n}{N} \ell_{n, N} (\theta)$ is asymptotically equivalent to optimising function $\mathcal{Q}(\theta)$ given in (\ref{eq:limit_Q}). In particular, $Q(\theta)$ admits the unique minima $\alpha_S = \alpha_S^\dagger$ under Assumptions \ref{ass:hor} and \ref{ass:ident} due to the positive definiteness of $\Lambda_{SS} (\theta, W, \mu^\dagger)$ for any $(\theta, W) \in \Theta \times \mathbb{R}^d$ and $b_S (\alpha_S, x) \neq 0$, $\mu_t (dx) \, \mathrm{a.e.} \,$ for all $t \in [0, T]$ whenever  $\alpha_S \neq \alpha_S^\dagger$. Similarly, the asymptotic results below imply that scaled objective functions asymptotically allow a unique minimum for the cases of $\alpha_R$ and $\beta$ as well under Assumptions \ref{ass:hor} and \ref{ass:ident}.
\end{remark} 

\noindent 
\textbf{Step II.} We obtain a convergence rate for $\hat{\alpha}_S^{n, N}$, specifically $\hat{\alpha}_S^{n , N} - \alpha^\dagger_S  = o_{\mathbb{P}_\trueparam} ( \sqrt{\Delta_n} )$, which is not sharp but suffices to prove consistency for the remaining estimators. 
To this end, we consider the Taylor expansion of $\partial_{\alpha_S} \ell_{n, N} (\hat{\theta}^{n,N})$ around  $(\alpha_S^\dagger, \hat{\alpha}_R^{n,N}, \hat{\beta}^{n,N})$. As  $\partial_{\alpha_S} \ell_{n, N} (\hat{\theta}^{n,N}) = \mathbf{0}_{d_{\alpha_S}}$ from the definition of $\hat{\theta}^{n, N}$, we have that: 
\begin{align} 
\label{eq:taylor_step_II}
{A}^{n ,N} (\alpha_S^\dagger, \hat{\alpha}_R^{n,N}, \hat{\beta}^{n,N}) = {B}^{n,N} (\hat{\theta}^{n, N})  \times \tfrac{1}{\sqrt{\Delta_n}} 
\bigl( \hat{\alpha}_S^{n,N} - \alpha_S^\dagger  \bigr),  
\end{align} 
where for $\theta = (\alpha_S, \alpha_R, \beta) \in \Theta$ we set:
\begin{align*}
{A}^{n,N} (\theta) & := - \tfrac{\sqrt{\Delta_n^3}}{N} \partial_{\alpha_S} \ell_{n, N} (\theta), \\ 
{B}^{n,N} (\theta) & := \tfrac{\Delta_n^2}{N}  
\textstyle \int_0^1 \partial_{\alpha_S}^\top  \partial_{\alpha_S}  
\ell_{n, N} \bigl( \alpha_S^\dagger + \lambda (\alpha_S - \alpha_S^\dagger), \alpha_R, \beta   \bigr) d \lambda.  
\end{align*} 
\begin{lemma} \label{lemma:step-II}
Under Assumptions \ref{ass:law_dep}--\ref{ass:reg_coeff} it holds that: 
\begin{align} 
& \sup_{(\alpha_R, \beta) \in \Theta_{\alpha_R} \times \Theta_{\beta} } \bigl| A^{n, N} (\alpha_S^\dagger,  \label{eq:limit_A}
\alpha_R, \beta)  \bigr|  \probconv 0, \qquad n, N \to \infty; \\ 
& \sup_{(\alpha_R, \beta) \in \Theta_{\alpha_R} \times \Theta_{\beta}} 
\bigl| B^{n, N} (\hat{\alpha}_S^{n, N},  \alpha_R,  \beta)  - 2 \times \mathcal{B} (\alpha_S^\dagger, \beta)   \bigr|  \probconv 0, \qquad n, N \to \infty,  \label{eq:limit_B}
\end{align} 
where we have defined:
\begin{align*}
\mathcal{B} (\alpha_S, \beta) = \int_0^T 
\mathbb{E}_{\mu_t} 
\Bigl[ \bigl(  \partial_{\alpha_S}^\top V_{S, 0} (\alpha_S, W) \bigr)^\top 
\Lambda_{SS} \bigl( (\alpha_S, \beta),  W, \mu_t \bigr) 
\,\partial_{\alpha_S}^\top V_{S, 0} (\alpha_S, W) 
\Bigr]  dt. 
\end{align*}
\end{lemma}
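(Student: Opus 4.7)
The plan is to expand $\partial_{\alpha_S}\ell_{n,N}$ and $\partial_{\alpha_S}^\top\partial_{\alpha_S}\ell_{n,N}$ explicitly and then identify, for each resulting summand, its intrinsic order in $\Delta_n$ (via the factors $1/\sqrt{\Delta_n^3}$ inside $\mathbf{m}_S$) so that the proper combination of Lemmas~\ref{lemma:base_conv} and~\ref{lemma:key_conv} yields the stated limits. The single most useful observation is that at $\alpha_S = \alpha_S^\dagger$ we have $b_S(\alpha_S^\dagger,\cdot)=0$, so from the representation \eqref{eq:m_diff} the troublesome $1/\sqrt{\Delta_n}$ term in $\mathbf{m}_{j-1}^{[i]}(\alpha_S^\dagger,\alpha_R,\beta)-\mathbf{m}_{j-1}^{[i]}(\trueparam)$ vanishes, leaving only an $\mathcal{O}(\sqrt{\Delta_n})$ discrepancy; meanwhile $\partial_{\alpha_S}\mathbf{m}_{S,j-1}^{[i]} = -\partial_{\alpha_S}V_{S,0}(\alpha_S,X_{t_{j-1}}^{[i]})/\sqrt{\Delta_n}+\sqrt{\Delta_n}\,\mathcal{O}(1)$ and $\partial_{\alpha_S}\mathbf{m}_{R,j-1}^{[i]} = 0$. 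These two facts control the leading scale of every derivative encountered.

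For \eqref{eq:limit_A}, I differentiate the four summand forms $(\mathbf{m}^{[i]\top}\Lambda^{[i]}\mathbf{m}^{[i]}+\log\det\Sigma^{[i]})$ and group the resulting terms by order in $\Delta_n$. The dominant piece is $2\sum_{i,j}(\partial_{\alpha_S}\mathbf{m}_S^{[i]})^\top[\Lambda_{SS,j-1}^{[i]}\mathbf{m}_{S,j-1}^{[i]}(\theta)+\Lambda_{SR,j-1}^{[i]}\mathbf{m}_{R,j-1}^{[i]}(\theta)]$ which, evaluated at $\alpha_S=\alpha_S^\dagger$, is of order $1/\sqrt{\Delta_n}$ in its summand magnitude and uses the true-data $\mathbf{m}^{[i]}(\trueparam)$ up to an $\mathcal{O}(\sqrt{\Delta_n})$ perturbation. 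After multiplying by $\sqrt{\Delta_n^3}/N$, this becomes $\frac{\Delta_n}{N}\sum_{i,j} g(Z_{j-1}^{[i],\theta})\mathbf{m}^{[i],k}_{j-1}(\trueparam)$ plus a remainder of order $\frac{\Delta_n^2}{N}\sum_{i,j}g(Z_{j-1}^{[i]})\mathbf{m}^{[i],k_1}\mathbf{m}^{[i],k_2}$; by \eqref{eq:aux_conv_1} with the extra $\sqrt{\Delta_n}$ factor, the first vanishes in probability, and by \eqref{eq:aux_conv_2} the second is $\mathcal{O}_{\mathbb{P}}(\Delta_n)$. The remaining summands coming from $\partial_{\alpha_S}\Lambda^{[i]}$ and $\mathrm{tr}(\Lambda^{[i]}\partial_{\alpha_S}\Sigma^{[i]})$ are of order $nN = NT/\Delta_n$ in sum, hence become $\mathcal{O}_{\mathbb{P}}(\sqrt{\Delta_n})$ after scaling and vanish via \eqref{eq:conv_aux} or directly. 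Uniformity in $(\alpha_R,\beta)$ follows from the uniform-in-$\theta$ conclusions of Lemmas~\ref{lemma:base_conv},~\ref{lemma:key_conv}.

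For \eqref{eq:limit_B}, the same differentiation strategy applies but now the dominant summand in $\partial_{\alpha_{S,k_1}}\partial_{\alpha_{S,k_2}}(\mathbf{m}^\top\Lambda\mathbf{m})$ is $2(\partial_{\alpha_{S,k_1}}\mathbf{m}_S^{[i]})^\top\Lambda_{SS,j-1}^{[i]}(\partial_{\alpha_{S,k_2}}\mathbf{m}_S^{[i]})$ whose per-summand magnitude is $\frac{1}{\Delta_n}(\partial_{\alpha_{S,k_1}}V_{S,0})^\top\Lambda_{SS,j-1}^{[i]}(\partial_{\alpha_{S,k_2}}V_{S,0})+\mathcal{O}(1)$. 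Multiplied by the prefactor $\Delta_n^2/N$, this reduces to $\frac{\Delta_n}{N}\sum_{i,j}$ of a function in $\mathcal{R}$, which by Lemma~\ref{lemma:base_conv} converges uniformly in $\theta$ to $2[\mathcal{B}(\alpha_S,\beta)]_{k_1 k_2}$. All other summands involving mixed products $\mathbf{m}\cdot\mathbf{m}$, $\partial\mathbf{m}\cdot\mathbf{m}$, $\partial\Lambda$, $\partial^2\Lambda$ or derivatives of $\log\det\Sigma$ contribute at order $\sqrt{\Delta_n}$ or smaller after scaling and are handled by \eqref{eq:aux_conv_1}--\eqref{eq:aux_conv_2}. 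To pass from the intermediate $\alpha_S^\dagger+\lambda(\hat{\alpha}_S^{n,N}-\alpha_S^\dagger)$ to the limit $\mathcal{B}(\alpha_S^\dagger,\beta)$, I invoke the consistency $\hat{\alpha}_S^{n,N}\probconv\alpha_S^\dagger$ from Step~I together with uniform convergence in $\theta$.

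The main obstacle will be the meticulous bookkeeping of $\Delta_n$-orders for the many cross-terms produced by differentiating a quadratic form whose ingredients themselves scale like inverse powers of $\sqrt{\Delta_n}$, and in particular verifying that every non-leading term is genuinely $o_{\mathbb{P}_\trueparam}(1)$ under only the design condition $N\Delta_n\to 0$. A secondary technical point is that the intermediate parameter $\alpha_S^\dagger+\lambda(\hat{\alpha}_S^{n,N}-\alpha_S^\dagger)$ appearing inside $B^{n,N}$ forces us to work with fully uniform-in-$\theta$ versions of Lemmas~\ref{lemma:base_conv}, \ref{lemma:key_conv}, and to check continuity of $\mathcal{B}$ in its first argument so that substitution of $\hat{\alpha}_S^{n,N}$ for $\alpha_S^\dagger$ in the limit is justified.
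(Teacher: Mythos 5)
Your approach is essentially the paper's: exploit $b_S(\alpha_S^\dagger,\cdot)=0$ to make (\ref{eq:m_diff}) collapse to an $\mathcal{O}(\sqrt{\Delta_n})$ perturbation when $\alpha_S=\alpha_S^\dagger$, expand the derivatives of $\ell_{n,N}$, track $\Delta_n$-orders, invoke Lemmas~\ref{lemma:base_conv} and~\ref{lemma:key_conv} for the surviving pieces, and finally substitute the intermediate point via uniform convergence and consistency of $\hat{\alpha}_S^{n,N}$. Your treatment of \eqref{eq:limit_A} is correct and matches the paper's decomposition into $\mathcal{S}^{(1)},\mathcal{S}^{(2)},\mathcal{S}^{(3)}$.

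There is, however, a concrete bookkeeping error in your argument for \eqref{eq:limit_B}. You claim that, apart from the $(\partial_{\alpha_S}\mathbf{m}_S)^\top\Lambda_{SS}(\partial_{\alpha_S}\mathbf{m}_S)$ piece that yields $2\mathcal{B}(\alpha_S,\beta)$, ``all other summands contribute at order $\sqrt{\Delta_n}$ or smaller after scaling.'' That is not true at a generic $\theta$: the cross term $(\partial^2_{\alpha_S}\mathbf{m}_S^{[i]})^\top\Lambda_{S}^{[i]}\,\mathbf{m}^{[i]}(\theta)$ carries $-\partial^2_{\alpha_S}V_{S,0}/\sqrt{\Delta_n}$ from the first factor and $b_S(\alpha_S,\cdot)/\sqrt{\Delta_n}$ from \eqref{eq:m_diff} in the second, so the summand is genuinely $\mathcal{O}(1/\Delta_n)$ in magnitude. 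After multiplying by $\Delta_n^2/N$ and summing over $nN$ terms this produces an $\mathcal{O}(1)$ contribution $\tfrac{\Delta_n}{N}\sum_{i,j}\partial^2_{\alpha_S}\bigl(V_{S,0}^\top\Lambda\bigr)\,b_S$, which by Lemma~\ref{lemma:base_conv} converges to a non-vanishing limit $2\int_0^T\mathbb{E}_{\mu_t}\bigl[\partial_{\alpha_S,k_1}\partial_{\alpha_S,k_2}(V_{S,0}^\top\Lambda)\,b_S(\alpha_S,W)\bigr]dt$; this is exactly the paper's $\mathcal{T}_{n,N}^{(2)}$ term in \eqref{eq:f_a_S}. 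Consequently the uniform-in-$\theta$ limit of $\tfrac{\Delta_n^2}{N}\partial^2_{\alpha_S}\ell_{n,N}(\theta)$ is \emph{not} simply $2\mathcal{B}(\alpha_S,\beta)$; it has an additional $b_S$-dependent term, and the consistency step at the end of your argument is needed precisely to kill this term (since $b_S(\alpha_S^\dagger,\cdot)=0$), not merely to replace $\alpha_S$ with $\alpha_S^\dagger$ inside $\mathcal{B}$. Once you identify this term explicitly (as the paper does) and verify that consistency of $\hat{\alpha}_S^{n,N}$ together with continuity of $b_S$ annihilates it in the limit, the proof goes through.
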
 

\noindent 
\textit{Proof of Lemma \ref{lemma:step-II}.}  \textit{I. Proof of  (\ref{eq:limit_A})}. 
We set $\theta_S^\dagger := (\alpha_S^\dagger, \alpha_R, \beta)$.
We have from (\ref{eq:m_diff}) that: 
\begin{align} \label{eq:decomp_S}
\mathbf{m}_{j-1}^{[i]} (\alpha_S^\dagger, \alpha_R, \beta) 
- \mathbf{m}_{j-1}^{[i]} (\trueparam)  
= \sqrt{\Delta_n} \times f (\theta_S^\dagger,  X_{t_{j-1}}^{[i]}, \mu_{t_{j-1}}^N), 
\end{align}
for some $f: \Theta \times \mathbb{R}^d \times \mathcal{P}_2 (\mathbb{R}^d) \to \mathbb{R}^d$ with  each coordinate shown to satisfy $f^m \in \mathcal{R}$, $1 \le m \le d$, under Assumptions \ref{ass:growth}, \ref{ass:test_function}, \ref{ass:reg_coeff}. We then have from (\ref{eq:contrast_decomp}) and \eqref{eq:decomp_S} that, for $1 \le k \le d_{\alpha_S}$,  $(\alpha_R, \beta) \in \Theta_{\alpha_R} \times \Theta_{\beta}$:
\begin{align*} 
- \tfrac{\sqrt{\Delta_n^3}}{N} \partial_{\alpha_S, k} \ell_{n, N} (\alpha_S, \alpha_R, \beta) |_{\alpha_S = \alpha^\dagger_S} 
= \sum_{1 \le \iota \le 3} \mathcal{S}_{n, N}^{(\iota), k} (\alpha_S^\dagger, \alpha_R, \beta),
\end{align*} 
with individual terms:
\begin{align*}
\mathcal{S}_{n, N}^{(1), k} (\theta)
& = 
\tfrac{\sqrt{\Delta_n^3}}{N} \sum_{1 \le i \le N} 
\sum_{1 \le j \le n} \sum_{1 \le \ell_1, \ell_2 \le d} f^{(1)}_{\ell_1 \ell_2} (Z_{j-1}^{[i], \trueparam_S}) 
\mathbf{m}_{j-1}^{[i], \ell_1} (\trueparam)
\mathbf{m}_{j-1}^{[i], \ell_2} (\trueparam);  \\   
\mathcal{S}_{n, N}^{(2), k} (\theta) 
& =  h (\Delta_n^{1/2}) \times 
\tfrac{\sqrt{\Delta_n}}{N} 
\sum_{1 \le i \le N} 
\sum_{1 \le j \le n} 
\sum_{1 \le \ell \le d} 
f^{(2)}_{\ell} (Z_{j-1}^{[i], \trueparam_S}) \mathbf{m}_{j-1}^{[i], \ell} (\trueparam), \\ 
\mathcal{S}_{n, N}^{(3), k} (\theta)   
&= h  (\Delta_n^{1/2}) \times 
\tfrac{\Delta_n}{N} 
\sum_{1 \le i \le N} 
\sum_{1 \le j \le n}  
f^{(3)} (Z_{j-1}^{[i], \trueparam_S}), 
\end{align*} 
where $f_{\ell_1 \ell_2}^{(1)}, \, f_{\ell}^{(2)}, f^{(3)} \in \mathcal{R}$ and $h  : [0, \infty) \to \mathbb{R}$ such that $| h (t) | \lesssim t$, $t \in [0, \infty)$.  Applying Lemmas \ref{lemma:base_conv}, \ref{lemma:key_conv} to $\mathcal{S}_{n, N}^{(\iota), k} (\theta)$, $\iota = 1, 2, 3$, we conclude \eqref{eq:limit_A}.  \\ 

\noindent  
\textit{II. Proof of  (\ref{eq:limit_B})}. We will show that for $1 \le k_1, k_2 \le d$: 
\begin{align} 
\label{eq:f_a_S}
\begin{aligned} 
\tfrac{\Delta_n^2}{N} &\partial_{\alpha_S, k_1}  \partial_{\alpha_S, k_2}  \ell_{n, N} (\theta) \probconv \\   &2  
\textstyle \int_{0}^T \mathbb{E}_{\mu_t} \bigl[  \partial_{\alpha_S, k_1} V_{S,0} (\alpha_{S},  W)^\top  \Lambda (\theta, W, \mu_t)  \partial_{\alpha_S, k_2} V_{S,0} (\alpha_{S},  W)  \bigr] dt \\ 
&   
\qquad + 2  \textstyle \int_{0}^T \mathbb{E}_{\mu_t} \bigl[  \partial_{\alpha_S, k_1}  \partial_{\alpha_S, k_2} \bigl( V_{S,0} (\alpha_{S}, W)^\top  \Lambda (\theta, W, \mu_t)  \bigr) 
b_S (\alpha_S, W)
\bigr] dt,  
\end{aligned}  
\end{align} 
uniformly in $\theta$, as $N, n \to \infty$.  We have that  $\textstyle \tfrac{\Delta_n^2}{N} \partial_{\alpha_S, k_1}  \partial_{\alpha_S, k_2}  \ell_{n, N} (\theta) = \sum_{i=1}^5 \mathcal{T}_{n, N}^{(\iota)} (\theta)$ with:
\begin{align*}
\mathcal{T}_{n, N}^{(1)} (\theta) 
& = 2 \times \tfrac{\Delta_n}{N} \sum_{1 \le i \le N} 
\sum_{1 \le j \le n} 
\partial_{\alpha_S, k_1} V_{S, 0} (\alpha_S, \sample{j-1}{i})^\top \Lambda_{j-1}^{[i]} (\theta)
\partial_{\alpha_S, k_2} V_{S, 0} (\alpha_S, \sample{j-1}{i}); \\  
\mathcal{T}_{n, N}^{(2)} (\theta) 
& = 2 \times \tfrac{\Delta_n}{N} \sum_{1 \le i \le N} 
\sum_{1 \le j \le n} 
\partial_{\alpha_S, k_1}  \partial_{\alpha_S, k_2} \bigl( V_{S, 0} (\alpha_S, \sample{j-1}{i}) \Lambda_{j-1}^{[i]} (\theta) \bigr)^\top  
b_S (\alpha_S, \sample{j-1}{i}) ; \\   
\mathcal{T}_{n, N}^{(3)} (\theta)  
& = \tfrac{\Delta_n^2}{N} \sum_{1 \le i \le N} 
\sum_{1 \le j \le n}  
\mathbf{m}_{j-1}^{[i]} (\trueparam)^\top  \partial_{\alpha_S, k_1}  \partial_{\alpha_S, k_2}  \Lambda_{j-1}^{[i]} (\theta) \, \mathbf{m}_{j-1}^{[i]} (\trueparam);  \\ 
\mathcal{T}_{n, N}^{(4)} (\theta)   
& = \tfrac{\sqrt{\Delta_n^3}}{N} \sum_{1 \le i \le N} 
\sum_{1 \le j \le n} 
\sum_{1 \le \ell \le d}  
f_{k_1 k_2}^{(4), \ell}  (Z_{j-1}^{[i], \theta}) \,  \mathbf{m}_{j-1}^{[i], \ell} (\trueparam), \\ 
\mathcal{T}_{n, N}^{(5)} (\theta)   
&= \tfrac{\Delta_n^2}{N} \sum_{1 \le i \le N} 
\sum_{1 \le j \le n}  
f_{k_1 k_2}^{(5)} 
(Z_{j-1}^{[i], \theta}),  
\end{align*} 
where $f_{k_1 k_2}^{(4), \ell}, f_{k_1 k_2}^{(5)} \in \mathcal{R}$. We then obtain \eqref{eq:f_a_S} by applying Lemmas \ref{lemma:base_conv}, \ref{lemma:key_conv} to $\mathcal{T}_{n, N}^{(\iota)} (\theta), \ 1 \le \iota \le 5$. Due to the uniform convergence w.r.t. $\theta \in \Theta$ and the consistency of $\hat{\alpha}_{S}^{n, N}$, we get (\ref{eq:limit_B}). Proof of Lemma \ref{lemma:step-II} is now complete. \qed

Given Lemma \ref{lemma:step-II}, from equation (\ref{eq:taylor_step_II}) we immediately obtain that: 
\begin{align} 
\label{eq:rate_a_S}
\hat{\alpha}_{S}^{n ,N} - \alpha_S^\dagger = o_{\,\mathbb{P}_{\trueparam}} (\sqrt{\Delta_n}). 
\end{align}  

\noindent 
\textbf{Step III.} Given the convergence rate (\ref{eq:rate_a_S}) for $\hat{\alpha}_{S}^{n, N}$ in \textbf{Step  II.}, we prove that $\hat{\beta}^{n, N} \probconv \beta^\dagger$ as $n, N \to \infty$. Using the consistency of $\hat{\alpha}_S^{n, N}$ with (\ref{eq:rate_a_S}), we obtain the following result. 
%
\begin{lemma} \label{lemma:step-III}
Under Assumptions \ref{ass:law_dep}--\ref{ass:reg_coeff} it holds that: 
\begin{align*}
\sup_{ (\alpha_R, \beta)  \in \Theta_{\alpha_R} \times \Theta_{\beta}} 
\bigl|  
\tfrac{\Delta_n}{N}  \ell_{n, N} (\hat{\alpha}_S^{n, N}, \alpha_R, \beta)   
- \mathcal{R} (\beta) 
\bigr| \probconv 0, 
\end{align*}
where we have defined:
\begin{align*}
\mathcal{R} (\beta) := \int_0^T \mathbb{E}_{\mu_t}
\bigl[ \mathrm{tr} \bigl\{ \Lambda \bigl(
(\alpha_S^\dagger, \beta), W, \mu_t
\bigr) \Sigma \bigl( (\alpha_S^\dagger, \beta), W, \mu_t \bigr) 
\bigr\} + \log \det \Sigma \bigl( (\alpha_S^\dagger, \beta), W, \mu_t \bigr) \bigr] dt.  
\end{align*}
%
%
\end{lemma}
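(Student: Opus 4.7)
The plan is to start from the decomposition \eqref{eq:contrast_decomp}, substitute $\alpha_S = \hat{\alpha}_S^{n,N}$ into each piece, multiply by $\Delta_n/N$, and show that $L^{(1)}$ and $L^{(4)}$ generate the limit $\mathcal{R}(\beta)$ while $L^{(2)}$ and $L^{(3)}$ are negligible thanks to the rate \eqref{eq:rate_a_S} obtained in \textbf{Step II}. For $\tfrac{\Delta_n}{N}L^{(1)}_{n,N}(\theta)$, applying (\ref{eq:aux_conv_2}) of Lemma~\ref{lemma:key_conv} coordinate-wise to the entries of $\Lambda_{j-1}^{[i]}(\theta)$ -- which belong to $\mathcal{R}$ under Assumptions~\ref{ass:test_function}--\ref{ass:reg_coeff} and the uniform lower bound on $\det\Sigma$ from Lemma~\ref{lemma:inv_Sigma} -- yields uniform convergence to $\int_0^T \mathbb{E}_{\mu_t}[\mathrm{tr}\{\Lambda(\theta,W,\mu_t)\,\Sigma(\trueparam,W,\mu_t)\}]\,dt$. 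For $\tfrac{\Delta_n}{N}L^{(4)}_{n,N}(\theta)$, Lemma~\ref{lemma:base_conv} applied to $(\theta,x,\mu)\mapsto \log\det\Sigma(\theta,x,\mu)\in\mathcal{R}$ gives the $\log\det$ contribution uniformly in $\theta$. Since $\Sigma$, and hence $\Lambda$, depends on $\theta$ only through $(\alpha_S,\beta)$, the combined limit is a function of $\beta$ alone once we set $\alpha_S=\alpha_S^\dagger$, which produces $\mathcal{R}(\beta)$.

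The delicate piece is $\tfrac{\Delta_n}{N}[L^{(2)}_{n,N}+L^{(3)}_{n,N}](\hat{\alpha}_S^{n,N},\alpha_R,\beta)$. Expansion \eqref{eq:m_diff} writes $\mathbf{m}_{j-1}^{[i]}(\theta)-\mathbf{m}_{j-1}^{[i]}(\trueparam)$ as a singular $\Delta_n^{-1/2}\,b_S(\alpha_S,X_{t_{j-1}}^{[i]})$ term in the smooth block plus a benign $\sqrt{\Delta_n}$-order remainder with coordinates in $\mathcal{R}$. The local Lipschitz estimate on $V_{S,0}$ in $\alpha_S$ from Assumption~\ref{ass:test_function} combined with the rate \eqref{eq:rate_a_S} produces the key bound $\Delta_n^{-1/2}|b_S(\hat{\alpha}_S^{n,N},x)| = o_{\mathbb{P}_{\trueparam}}(1)\,(1+|x|^q)$. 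Plugging this into $L^{(2)}$ gives cross terms of the form $\tfrac{\sqrt{\Delta_n}}{N}\sum_{i,j} g(Z_{j-1}^{[i],\theta})\,\mathbf{m}_{j-1}^{[i], k}(\trueparam)$ handled by (\ref{eq:aux_conv_1}) of Lemma~\ref{lemma:key_conv}, while $L^{(3)}$ contributes a quadratic $b_S$-term of magnitude $\tfrac{1}{N}\sum_{i,j} b_S^\top\Lambda_{SS,j-1}^{[i]}(\theta)\,b_S = o_{\mathbb{P}_{\trueparam}}(1)\cdot\tfrac{n\Delta_n}{N}\cdot O_{\mathbb{P}_{\trueparam}}(1)=o_{\mathbb{P}_{\trueparam}}(T/N)$ after exploiting the uniform bound on $\Lambda_{SS}$ from Lemma~\ref{lemma:inv_Sigma} and the particle moment bounds from Lemma~\ref{lemma:bds_aux}. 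The purely $\sqrt{\Delta_n}$-order contributions and the cross terms between the singular and regular parts likewise vanish by direct applications of Lemmas~\ref{lemma:base_conv} and~\ref{lemma:key_conv}.

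The main obstacle is justifying \emph{uniformity} in $(\alpha_R,\beta)$ when the data-dependent $\hat{\alpha}_S^{n,N}$ is plugged in, since the uniform-in-$\theta$ statements of Lemmas~\ref{lemma:base_conv}--\ref{lemma:key_conv} require deterministic arguments. The workaround is to isolate the stochastic prefactor $(\hat{\alpha}_S^{n,N}-\alpha_S^\dagger)/\sqrt{\Delta_n}=o_{\mathbb{P}_{\trueparam}}(1)$ via the mean-value inequality $|b_S(\hat{\alpha}_S^{n,N},x)|\le C|\hat{\alpha}_S^{n,N}-\alpha_S^\dagger|(1+|x|^q)$, thereby reducing each inner sum to a functional of $(Z_{j-1}^{[i],\theta})$ that depends only on $(\alpha_R,\beta)$ and lies in $\mathcal{R}$; this reduced sum is then controlled uniformly in $(\alpha_R,\beta)$ by the cited lemmas, and the outer $o_{\mathbb{P}_{\trueparam}}(1)$ factor absorbs the loss. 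Assembling the four convergences yields the claimed uniform limit $\mathcal{R}(\beta)$.
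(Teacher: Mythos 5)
Your proof takes essentially the same route as the paper's: split the scaled contrast via \eqref{eq:contrast_decomp} and \eqref{eq:m_diff}, let $L^{(1)}+L^{(4)}$ produce the limit through Lemmas~\ref{lemma:base_conv}--\ref{lemma:key_conv}, and kill the singular $\Delta_n^{-1}b_S^\top\Lambda_{SS}b_S$ block (the piece that would otherwise grow like $n$) by factoring out $(\hat\alpha_S^{n,N}-\alpha_S^\dagger)/\sqrt{\Delta_n}=o_{\mathbb{P}_{\trueparam}}(1)$ via a mean-value representation, exactly the paper's $\Phi$-factorisation of $\mathcal{U}_{n,N}^{(1)}$, which also resolves the uniformity-in-$(\alpha_R,\beta)$ issue you correctly flag. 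One arithmetic slip: in your bound for the quadratic $b_S$ term you wrote $o_{\mathbb{P}_{\trueparam}}(1)\cdot\tfrac{n\Delta_n}{N}\cdot O_{\mathbb{P}_{\trueparam}}(1)=o_{\mathbb{P}_{\trueparam}}(T/N)$, but $\tfrac{1}{N}\sum_{i=1}^N\sum_{j=1}^n$ has $n$ as its effective count, so the correct order is $o_{\mathbb{P}_{\trueparam}}(n\Delta_n)=o_{\mathbb{P}_{\trueparam}}(T)=o_{\mathbb{P}_{\trueparam}}(1)$ (no extra $1/N$); the conclusion that the term is negligible is unchanged, so this is cosmetic rather than a gap.
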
 

\noindent 
\textit{Proof of Lemma \ref{lemma:step-III}.} From (\ref{eq:contrast_decomp}), we obtain    $\textstyle \tfrac{\Delta_n}{N} \ell_{n, N} (\theta) = \sum_{\iota = 1}^4 \mathcal{U}_{n, N}^{(\iota)} (\theta)$ for the functions: 
\begin{align*}
\mathcal{U}_{n, N}^{(1)} (\theta) 
& :=  \tfrac{1}{N} \sum_{1 \le i \le N} \sum_{1 \le j \le n} 
b_S (\alpha_S, \sample{j-1}{i})^\top 
\Lambda_{SS, j-1}^{[i]} (\theta) b_S (\alpha_S, \sample{j-1}{i}), \\[0.2cm]
\mathcal{U}_{n, N}^{(2)} (\theta)  
&:=  \tfrac{\Delta_n}{N} \bigl\{ L_{n, N}^{(1)} (\theta) +  L_{n, N}^{(4)} (\theta)  \bigr\}; 
\\[0.2cm] 
\mathcal{U}_{n, N}^{(3)} (\theta)  
& :=  h^{(3)} (1) \times \tfrac{\sqrt{\Delta_n}}{N} \sum_{1 \le i \le N} \sum_{1 \le j \le n} \sum_{1 \le k \le d} f^{(3), k} (Z_{j-1}^{[i], \theta}) \mathbf{m}_{j-1}^{[i], k} (\trueparam); \\  
\mathcal{U}_{n, N}^{(4)} (\theta)  
&  := h^{(4)} (\Delta_n^{1/2}) \times \tfrac{\Delta_n}{N} \sum_{1 \le i \le N} \sum_{1 \le j \le n} 
f^{(4)} (Z_{j-1}^{[i], \theta}),  
\end{align*} 
for $\theta = (\alpha_S, \alpha_R, \beta) \in \Theta$, where $f^{(3), k}, f^{(4)} \in \mathcal{R}$ and $h^{(\iota)} : [0, \infty) \to \mathbb{R}, \, \iota = 3, 4$, such that $| h^{(\iota)} (t) | \lesssim t$, $t \in [0, \infty)$. We directly have from Lemmas \ref{lemma:base_conv}, \ref{lemma:key_conv} that if $n, N \to \infty$, then: $\mathcal{U}_{n, N}^{(\iota)} (\theta)  \probconv 0, \, \iota = 3, 4$ and 
\begin{align*}
\mathcal{U}_{n, N}^{(2)} (\theta)  
\probconv 
\textstyle \int_0^T \mathbb{E}_{\mu_t} \bigl[  \mathrm{tr} \bigl\{ \Lambda (\theta, W, \mu_t)  \Sigma (\theta, W, \mu_t)  \bigr\} 
+ \log \det \Sigma (\theta, W, \mu_t) \bigr] dt,   
\end{align*} 
uniformly in $\theta \in \Theta$. 
We now study the limit of $\mathcal{U}_{n, N}^{(1)} (\hat{\alpha}_S^{n, N}, \alpha_R, \beta)$. Performing Taylor expansion for $V_{S, 0} (\alpha_S, \sample{j-1}{i}) $ at $\alpha_S = \alpha_S^\dagger$, we have that: 
$
b_S (\alpha_S, \sample{j-1}{i}) / \sqrt{\Delta_n} 
= \Phi (\alpha_S, \sample{j-1}{i}) \times \Bigl( \tfrac{\alpha_S^\dagger -\alpha_S}{\sqrt{\Delta_n}} \Bigr),
$
where we have considered the function:
\begin{align*}
\Phi (\alpha_S, \sample{j-1}{i})  
:= \textstyle 
\int_0^1  \partial_{\xi}^\top V_{S, 0}  \bigl( \xi, \sample{j-1}{i}  \bigr) |_{\xi = \alpha_S + \nu (\alpha_S^\dagger - \alpha_S)}  d \nu. 
\end{align*}
We then express $\mathcal{U}_{n, N}^{(1)} (\theta)$ as  
$
\mathcal{U}_{n, N}^{(1)} (\theta)  := \Bigl(  \tfrac{ \alpha_S^\dagger- \alpha_S}{\sqrt{\Delta_n}} \Bigr)^\top 
\,  
\widetilde{\mathcal{U}}_{n, N}^{(1)} (\theta) 
\, 
\Bigl(  \tfrac{ \alpha_S^\dagger- \alpha_S}{\sqrt{\Delta_n}} \Bigr) 
$ 
with: 
\begin{align*} 
\widetilde{\mathcal{U}}_{n, N}^{(1)} (\theta)
:= \tfrac{\Delta_n}{N} 
\sum_{1 \le i \le N} 
\sum_{1 \le j \le n} 
\Phi (\alpha_S, \sample{j-1}{i})^\top 
\Lambda_{SS, j-1}^{[i]} (\theta)
\,\Phi (\alpha_S, \sample{j-1}{i}).   
\end{align*}
Assumptions \ref{ass:hor}-\ref{ass:reg_coeff} together with the finite moments of IPS in $L_p$ sense (Lemma \ref{lemma:bds_aux} in \cite{supp_AoS}) leads to that the $\{ \widetilde{\mathcal{U}}_{n, N}^{(1)} (\theta)\}_{n, N}$ is a  bounded sequence uniformly in $\theta \in \Theta$ in probability. Then, due to (\ref{eq:rate_a_S}), we obtain:
\begin{align*}
\mathcal{U}_{n, N}^{(1)} (\hat{\alpha}_S^{n, N}, \alpha_R, \beta) 
\probconv 0,  \quad n, N \to \infty,
\end{align*}
uniformly in $(\alpha_R, \beta) \in \Theta_{\alpha_R} \times \Theta_\beta$.  
The proof of Lemma \ref{lemma:step-III} is now complete.  \qed 
\\ 

Given Lemma \ref{lemma:step-III}, we prove the consistency of $\hat{\beta}^{n, N}$ as follows. First, we have that 
$ \textstyle 
\mathcal{R} (\beta) - \mathcal{R} (\truebeta)  = \int_0^T \mathbb{E}_{\mu_t} \bigl[ \widetilde{\mathcal{R}}  (\beta, W, \mu_t )\bigr]  dt, 
$
with: 
\begin{align*}
\widetilde{\mathcal{R}}  (\beta, W, \mu_t ) :=
\mathrm{tr} \bigl\{ \Lambda \bigl( (\alpha_S^\dagger, \beta), W, \mu_t
\bigr) \Sigma \bigl( (\alpha_S^\dagger, \truebeta), W, \mu_t \bigr) 
\bigr\} - d 
+ \log \tfrac{\det \Sigma \bigl( (\alpha_S^\dagger, \beta), W, \mu_t \bigr) }{\det \Sigma \bigl( (\alpha_S^\dagger, \truebeta), W, \mu_t \bigr)}.   
\end{align*}
Note that $\tfrac{1}{2} \widetilde{\mathcal{R}}  (\beta, W, \mu_t ) $ coincides with the Kullback-Leibler divergence between the Gaussian distributions $\mathscr{N} \bigl( \mathbf{0}_d, \Sigma \bigl( (\alpha_S^\dagger, \beta), W, \mu_t \bigr) \bigr)$ and $\mathscr{N} \bigl( \mathbf{0}_d, \Sigma \bigl( (\alpha_S^\dagger, \truebeta), W, \mu_t \bigr) \bigr)$,   so $\mathcal{R} (\beta) - \mathcal{R}(\truebeta) \ge 0$ for all $\beta \in \Theta_\beta$ with the equality satisfied iff $\beta = \truebeta$  under Assumption \ref{ass:ident}. Thus, for any $\epsilon > 0 $ there exists $\delta > 0$ such that 
$
\mathbb{P}_{\trueparam} \bigl( | \hat{\beta}^{n, N} - \truebeta | > \varepsilon \bigr)\le 
\mathbb{P}_{\trueparam} \bigl( \mathcal{R}(\hat{\beta}^{n, N}) - \mathcal{R} (\truebeta) > \delta \bigr). 
$ 
%
Then, the last term is shown to converge to $0$ as $n, N \to \infty$ from a similar argument used in \textbf{Step I.} together with Lemma \ref{lemma:step-III}, and then we conclude the consistency of $\hat{\beta}_{n, N}$.  \\

\noindent 
\textbf{Step IV.}  
We finally consider $\hat{\alpha}_R^{n,N}$. We introduce 
$
D (\theta) := \tfrac{1}{N} \ell_{n, N} (\theta) - \tfrac{1}{N} \ell_{n, N} (\alpha_S, \alpha_R^\dagger, \beta)$,  $\theta = (\alpha_S, \alpha_R, \beta) \in \Theta.  
$
Then, the consistency of $\hat{\alpha}_R^{n,N}$ is shown via the following result: 
%
\begin{lemma} \label{lemma:step_IV}
Under Assumptions \ref{ass:law_dep}--\ref{ass:reg_coeff}, it holds that:
\begin{align*}
\sup_{ \theta \in \Theta} 
\bigl| D (\theta) - \mathcal{S} (\alpha_R) \bigr| \probconv 0, 
\qquad n , N \to \infty, 
\end{align*}
where we have defined:
\begin{align*}
\mathcal{S} (\alpha_R) = \textstyle \int_0^T 
\mathbb{E}_{\mu_t}
\bigl[ 
b_R (\alpha_R, W, \mu_t^\dagger)^\top 
\Sigma_{RR}^{-1} (\beta^\dagger, W, \mu_t^{\dagger})
\, b_R (\alpha_R, W, \mu_t^\dagger)  
\bigr] dt. 
\end{align*}
%
%
%
\end{lemma}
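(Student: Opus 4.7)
My plan is to mirror the strategies of Lemmas \ref{lemma:step-1} and \ref{lemma:step-III}, exploiting the decomposition \eqref{eq:contrast_decomp} together with the block structure of $\Sigma$ in \eqref{eq:Sigma}. First I would note that $L^{(1)}_{n,N}(\theta)$ and $L^{(4)}_{n,N}(\theta)$ depend on $\theta$ only through $\Sigma$, which in turn depends only on $(\alpha_S,\beta)$; hence both cancel identically in $D(\theta)$, leaving only the $L^{(2)}_{n,N}$ and $L^{(3)}_{n,N}$ contributions.

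Next, using \eqref{eq:m_diff}, I would write $\mathbf{m}^{[i]}_{j-1}(\theta)-\mathbf{m}^{[i]}_{j-1}(\trueparam)=\mathbf{d}_1+\mathbf{d}_2(\theta)$, where $\mathbf{d}_1 = (b_S(\alpha_S,X^{[i]}_{t_{j-1}})/\sqrt{\Delta_n},\ \mathbf{0})^\top$ depends only on $\alpha_S$ and so cancels under the $D$-difference. What survives is
\begin{align*}
\mathbf{d}_2(\theta)-\mathbf{d}_2(\alpha_S,\alpha_R^\dagger,\beta)=\sqrt{\Delta_n}\bigl[\tfrac{1}{2}M b_R,\ b_R\bigr]^\top, \qquad M := \partial_{x_R}^\top V_{S,0}(\alpha_S,X^{[i]}_{t_{j-1}}),
\end{align*}
since the $\alpha_R$-dependence of $\mathscr{L}_0[V_{S,0}(\alpha_S,\cdot)]$ enters linearly through the rough drift $V_{R,0}$ in the first-order part, while the $a$-dependent second-order part involves only $\beta$. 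This gives $c_S^{[i]}(\theta)-c_S^{[i]}(\alpha_S,\alpha_R^\dagger,\beta)=\tfrac12 M b_R$ directly from the definition of $\mathscr{L}_0$.

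The crux of the proof is a pair of block-inversion identities read off from \eqref{eq:Sigma} through $\Sigma_{SR}\Sigma_{RR}^{-1}=\tfrac12 M$ and $\Sigma_{SS}-\Sigma_{SR}\Sigma_{RR}^{-1}\Sigma_{RS}=\tfrac{1}{12}a_S$:
\begin{align*}
\Lambda_{SR} = -\tfrac{1}{2}\Lambda_{SS} M,\qquad \Lambda_{RR} - \tfrac{1}{4} M^\top \Lambda_{SS} M = \Sigma_{RR}^{-1}.
\end{align*}
The first identity annihilates the cross term $2\mathbf{d}_1^\top\Lambda^{[i]}(\theta)(\mathbf{d}_2(\theta)-\mathbf{d}_2^\dagger)$, which per summand is of order $O(1)$ (the $\Delta_n^{-1/2}$ in $\mathbf{d}_1$ cancels the $\Delta_n^{1/2}$ in $\mathbf{d}_2$) and would otherwise sum to an exploding $O(n)$ term after rescaling by $1/N$. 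Setting $A := \tfrac12 M b_R$, the same identity also yields $\Lambda_{SS}A+\Lambda_{SR}b_R=0$, eliminating every contribution linear in $c_S^\dagger$ from $\mathbf{d}_2(\theta)^\top\Lambda^{[i]}(\theta)\mathbf{d}_2(\theta)-(\mathbf{d}_2^\dagger)^\top\Lambda^{[i]}(\theta)\mathbf{d}_2^\dagger$. The second identity then collapses the remainder to $\Delta_n\, b_R^\top\Sigma_{RR}^{-1}b_R$, so that
\begin{align*}
L^{(3)}_{n,N}(\theta) - L^{(3)}_{n,N}(\alpha_S,\alpha_R^\dagger,\beta) = \Delta_n \sum_{i=1}^N\sum_{j=1}^n b_R^\top\, \Sigma_{RR}^{-1}\, b_R,
\end{align*}
evaluated at $(\alpha_R,\beta,X^{[i]}_{t_{j-1}},\mu^N_{t_{j-1}})$.

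To close the argument, the scaled $L^{(2)}$ difference takes the form $\tfrac{2\sqrt{\Delta_n}}{N}\sum_{i,j} g(Z^{[i],\theta}_{j-1})^\top \mathbf{m}^{[i]}_{j-1}(\trueparam)$ with components of $g$ in the class $\mathcal{R}$ under Assumptions \ref{ass:law_dep}--\ref{ass:reg_coeff}, so \eqref{eq:aux_conv_1} of Lemma \ref{lemma:key_conv} forces this to vanish uniformly in $\theta\in\Theta$. Lemma \ref{lemma:base_conv} applied to the scaled $L^{(3)}$ difference then delivers the uniform limit $\mathcal{S}(\alpha_R)$ (the evaluation of $\Sigma_{RR}^{-1}$ at $\beta$ versus $\beta^\dagger$ is reconciled in the closing consistency argument via $\hat{\beta}^{n,N}\probconv\beta^\dagger$ from Step III, analogously to how Step I was concluded for $\hat{\alpha}_S^{n,N}$). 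The main obstacle is identifying the algebraic cancellations enabled by the two block identities above: without them the $\Delta_n^{-1/2}$ factors in $\mathbf{d}_1$ would generate divergent contributions, and it is precisely to secure these cancellations that the higher-order term $\tfrac12\mathscr{L}_0[V_{S,0}]\Delta_n^2$ is built into \eqref{eq:LG-1}, a design feature distinguishing the locally Gaussian approximation from a plain Euler-Maruyama scheme.
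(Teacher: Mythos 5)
Your proposal is correct and takes essentially the paper's route: your block-inversion identities $\Lambda_{SR}=-\tfrac12\Lambda_{SS}M$ and $\Lambda_{RR}-\tfrac14M^\top\Lambda_{SS}M=\Sigma_{RR}^{-1}$ are exactly Lemma \ref{lemma:key_matrix} (the identity $\Lambda\bigl[\Sigma_{SR}^\top\ \Sigma_{RR}^\top\bigr]^\top=\bigl[\mathbf{0}\ I_{d_R}\bigr]^\top$) written out component-wise, and the rest is the same $L^{(1)}$--$L^{(4)}$ decomposition, the same use of Lemma \ref{lemma:key_conv} to annihilate the $L^{(2)}$ difference, and the same application of Lemma \ref{lemma:base_conv}. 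Your parenthetical remark also correctly flags a small imprecision that the paper's proof leaves implicit: the $L^{(3)}$ computation naturally yields $\Sigma_{RR}^{-1}(\beta,\cdot,\cdot)$ with a free $\beta$, whereas $\mathcal{S}(\alpha_R)$ is stated with $\beta^\dagger$ frozen, and the two are reconciled only via $\hat{\beta}^{n,N}\probconv\beta^\dagger$ from Step III.
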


\noindent 
\textit{Proof of Lemma \ref{lemma:step_IV}.} We write $\widetilde{\theta} = (\alpha_S, \alpha_R^\dagger, \beta)$. We note that $L_{n, N}^{(1)} (\theta)$ and $L_{n, N}^{(4)} (\theta)$ defined in (\ref{eq:contrast_decomp}) are independent of $\alpha_R$ (since $\Sigma_{j-1}^{[i]} (\theta)$ and  $\Lambda_{j-1}^{[i]} (\theta)$ depend only  on $(\alpha_S, \beta)$) and then $L_{n, N}^{(k)} (\theta)  -  L_{n, N}^{(k)} (\widetilde{\theta}) = 0$ for $k = 1, 4$.  We then have that $D (\theta) = D^{(1)} (\theta) + D^{(2)} (\theta)$ with 
$
D^{(1)} (\theta) :=  \tfrac{1}{N} \bigl( L_{n, N}^{(2)} (\theta) -  L_{n, N}^{(2)} (\widetilde{\theta}) \bigr)$,  $D^{(2)} (\theta) :=  \tfrac{1}{N} \bigl( L_{n, N}^{(3)} (\theta) -  L_{n, N}^{(3)} (\widetilde{\theta}) \bigr).  
$ 
From the definition of $L^{(2)}$ in (\ref{eq:contrast_decomp}) and formula (\ref{eq:m_diff}) we have that: 
\begin{align*}
D^{(1)} (\theta) =  & \tfrac{\sqrt{\Delta_n}}{N}  \sum_{i = 1}^N \sum_{j = 1}^n 
\begin{bmatrix}
\mathscr{L}_0 [V_{S, 0} (\alpha_S, \cdot)] (Z_{j-1}^{[i], \trueparam})
- 
\mathscr{L}_0 [V_{S, 0} (\alpha_S, \cdot)] (Z_{j-1}^{[i], \theta}) \\[0.2cm] 
2 b_{R} (\alpha_R, X_{t_{j-1}}^{[i]}, \mu_{t_{j-1}}) 
\end{bmatrix}^\top 
\Lambda_{j-1}^{[i]} (\theta) \, \mathbf{m}_{j-1}^{[i]} (\trueparam) 
\\ 
& \probconv 0, 
\end{align*} 
as $n, N \to \infty$ uniformly in $\theta \in \Theta$, where we have applied Lemma \ref{lemma:key_conv}. 
Similarly, noticing again that the terms that do not  involve $\alpha_R$ cancel out, we have that: 
\begin{align*}
& D^{(2)} (\theta)   
= \tfrac{2}{N} 
\sum_{1 \le i \le N} 
\sum_{1 \le j \le n} 
\begin{bmatrix}
b_S (\alpha_S, \sample{j-1}{i})^\top, \,  
\mathbf{0}_{d_R}^\top 		
\end{bmatrix}  
\Lambda_{j-1}^{[i]} (\theta) \mathcal{B} (Z_{j-1}^{[i], \theta}) 
\\  & +  \tfrac{\Delta_n}{N} \sum_{1 \le i \le N} \sum_{1 \le j \le n} 
\Biggl\{ 
\begin{bmatrix}
\tfrac{1}{2} \mathscr{L}_0 
[V_{S, 0 } (\alpha_S^\dagger, \cdot)] (Z_{j-1}^{[i], \trueparam})
- 
\tfrac{1}{2} \mathscr{L}_0 
[V_{S, 0 } (\alpha_S, \cdot)] (Z_{j-1}^{[i], \theta}) \\[0.2cm] 
b_{R} (\alpha_R, X_{t_{j-1}}^{[i]}, \mu_{t_{j-1}}^N)
\end{bmatrix}^\top \\
& \qquad + 
\begin{bmatrix}
\tfrac{1}{2} \mathscr{L}_0 
[V_{S, 0 } (\alpha_S^\dagger, \cdot)] (Z_{j-1}^{[i], \trueparam})
- 
\tfrac{1}{2} \mathscr{L}_0  
[V_{S, 0 } (\alpha_S, \cdot)] (Z_{j-1}^{[i], \widetilde{\theta}}) \\[0.2cm] 
\mathbf{0}_{d_R}
\end{bmatrix}^\top 
\Biggr\} 
\Lambda_{j-1}^{[i]} (\theta) \mathcal{B} (Z_{j-1}^{[i], \theta}), 
\end{align*}
where we have considered the term:
\begin{align} 
\label{eq:B}
\mathcal{B} (Z_{j-1}^{[i], \theta}) :=
\begin{bmatrix}
\tfrac{1}{2} \partial_{x_R}^\top V_{S, 0} (\alpha_S, X_{t_{j-1}}^{[i]})  \, 
b_R (\alpha_R, \sample{j-1}{i}, \mu_{t_{j-1}}^N ) \\[0.2cm]
b_R (\alpha_R, \sample{j-1}{i}, \mu_{t_{j-1}}^N ) 
\end{bmatrix}.   
\end{align}
We note that the first term of the RHS of $D^{(2)} (\theta)$ is exactly zero due to Lemma \ref{lemma:key_matrix} provided below. Further application of Lemma \ref{lemma:key_matrix} to the second term together with Lemma \ref{lemma:base_conv} yields: 
\begin{align*}
D^{(2)} (\theta)  
& = \tfrac{\Delta_n}{N} \sum_{1 \le i \le N} 
\sum_{1 \le j \le n} 
b_R (\alpha_R, \sample{j-1}{i}, \mu_{t_{j-1}}^N )^\top  
\Sigma_{RR}^{-1} (\beta, \sample{j-1}{i}, \mu_{t_{j-1}}^N ) 
b_R (\alpha_R, \sample{j-1}{i}, \mu_{t_{j-1}}^N )   \\ 
& \probconv  \textstyle \int_0^T \mathbb{E}_{\mu_t} \bigl[ 
b_R (\alpha_R, W, \mu_{t} )^\top   
\Sigma_{RR}^{-1} (\beta, W, \mu_t)  
b_R (\alpha_R, W, \mu_{t} )   \bigr] dt 
\end{align*}  
as $n, N \to \infty$, uniformly in $(\alpha_R, \beta) \in \Theta_{\alpha_R} \times \Theta_\beta$.   We thus conclude Lemma \ref{lemma:step_IV}. \qed 

\begin{lemma} \label{lemma:key_matrix}
It holds that for any $\theta \in \Theta$ and $j = 1, \ldots, n$, $i = 1, \ldots, N$:  
\begin{align*}
\Lambda_{j-1}^{[i]} (\theta) \, \mathcal{B} (Z_{j-1}^{[i]}) 
= 
\begin{bmatrix}
\mathbf{0}_{d_S} \\[0.2cm] 
\Sigma_{RR}^{-1} (\beta, \sample{j-1}{i}, \mu_{t_{j-1}}) 
b_R (\alpha_R, \sample{j-1}{i}, \mu_{t_{j-1}} ) 
\end{bmatrix}. 
\end{align*} 
\end{lemma}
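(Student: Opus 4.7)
The plan is to avoid explicitly computing the inverse $\Lambda_{j-1}^{[i]}(\theta)$ and instead verify the claimed identity by checking that $\Sigma_{j-1}^{[i]}(\theta)$ applied to the candidate vector on the right-hand side yields $\mathcal{B}(Z_{j-1}^{[i]})$. Since Lemma \ref{lemma:inv_Sigma} guarantees that $\Sigma_{j-1}^{[i]}(\theta)$ is invertible under Assumption \ref{ass:hor}, the identity $\Sigma v = \mathcal{B}$ uniquely determines $v = \Lambda \mathcal{B}$.

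For notational brevity write $A := \partial_{x_R}^\top V_{S, 0}(\alpha_S, X_{t_{j-1}}^{[i]}) \in \mathbb{R}^{d_S \times d_R}$ and suppress the arguments of $a_R$, $b_R$, $\Sigma_{RR}$. From the definition \eqref{eq:Sigma} of $\Sigma$ together with formula \eqref{eq:a_S} for $a_S$, the block structure reads
\begin{align*}
\Sigma_{j-1}^{[i]}(\theta) =
\begin{bmatrix}
\tfrac{1}{3} A \,a_R A^\top & \tfrac{1}{2} A\, a_R \\[0.1cm]
\tfrac{1}{2}\,a_R A^\top & a_R
\end{bmatrix},
\qquad
\mathcal{B}(Z_{j-1}^{[i]}) =
\begin{bmatrix}
\tfrac{1}{2} A \,b_R \\[0.1cm]
b_R
\end{bmatrix},
\end{align*}
so that $\Sigma_{SR} = \tfrac{1}{2} A \Sigma_{RR}$ and $\Sigma_{RR} = a_R$.

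Setting $v := \bigl[\mathbf{0}_{d_S}^\top, (\Sigma_{RR}^{-1} b_R)^\top \bigr]^\top$, a direct block multiplication yields
\begin{align*}
\Sigma_{j-1}^{[i]}(\theta)\, v
=
\begin{bmatrix}
\Sigma_{SR}\, \Sigma_{RR}^{-1} b_R \\[0.1cm]
\Sigma_{RR}\, \Sigma_{RR}^{-1} b_R
\end{bmatrix}
=
\begin{bmatrix}
\tfrac{1}{2} A\, a_R\, a_R^{-1} b_R \\[0.1cm]
b_R
\end{bmatrix}
=
\begin{bmatrix}
\tfrac{1}{2} A\, b_R \\[0.1cm]
b_R
\end{bmatrix}
= \mathcal{B}(Z_{j-1}^{[i]}).
\end{align*}
Multiplying both sides from the left by $\Lambda_{j-1}^{[i]}(\theta)$ gives the claimed identity. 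The argument involves no analytic difficulty once the key algebraic observation $\Sigma_{SR} = \tfrac{1}{2} A\, \Sigma_{RR}$ is noted: this is the structural feature of the locally Gaussian approximation \eqref{eq:LG-1} that makes $\mathcal{B}$ belong, up to the $\Sigma_{RR}^{-1} b_R$ factor, to the range of $\Sigma$ restricted to vectors supported on the rough coordinates. The only subtlety worth flagging is that the identity is crucially tied to the precise factors $1/2$, $1/3$ appearing in $\Sigma$ (which stem from the covariance of $\int_{t_{j-1}}^{t_j} B_u^{[i]} du$ and its correlation with $B_{t_j}^{[i]}-B_{t_{j-1}}^{[i]}$); any mismatch would destroy the orthogonality used in the proof of Lemma \ref{lemma:step_IV}.
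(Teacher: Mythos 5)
Your proof is correct and is essentially the same calculation as the paper's: both rest on the observation that $\mathcal{B}$ equals the last $d_R$ block-columns of $\Sigma$ applied to $\Sigma_{RR}^{-1}b_R$, so that $\Lambda\mathcal{B}$ reduces to $[\mathbf{0}_{d_S\times d_R};\,I_{d_R}]\Sigma_{RR}^{-1}b_R$. (One small inaccuracy in your closing remark: only the factor $1/2$ in $\Sigma_{SR}$ is used here; the factor $1/3$ in $\Sigma_{SS}$ never enters, since the vector $v$ is supported on the rough coordinates.)
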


\noindent 
\textit{Proof of Lemma \ref{lemma:key_matrix}.} Recall the definition of $\Sigma$ in (\ref{eq:Sigma}). Noticing that $\mathcal{B} (Z_{j-1}^{[i]})$ can be expressed as:
\begin{align*}
\mathcal{B} (Z_{j-1}^{[i]}) = 
\begin{bmatrix}
\Sigma_{SR} (Z_{j-1}^{[i]}) \\[0.1cm]
\Sigma_{RR} (Z_{j-1}^{[i]})
\end{bmatrix} 
\Sigma_{RR}^{-1} (Z_{j-1}^{[i], \theta}) b_R (\alpha_R, \sample{j-1}{i}, \mu_{t_{j-1}} ) 
\end{align*} 
and that: 
\begin{align*} \
\Lambda_{j-1}^{[i]} (\theta) 
\begin{bmatrix}
\Sigma_{SR} (Z_{j-1}^{[i], \theta})  \\[0.1cm] 
\Sigma_{RR} (Z_{j-1}^{[i], \theta}) 
\end{bmatrix}
= 
\begin{bmatrix}
\mathbf{0}_{d_S \times d_R} \\[0.1cm] 
I_{d_R}
\end{bmatrix}, 
\end{align*}
we obtain the desired result. \qed \\ 

\noindent 
Proof of Theorem \ref{thm:consistency} is now complete. 

\subsection{Proof of Theorem \ref{thm:clt}} 
\label{sec:pf_clt}
A Taylor expansion of $\nabla \ell_{n, N} (\hat{\theta}^{n, N})$ around $\nabla \ell_{n, N} (\trueparam) $ yields: 
\begin{align} 
\label{eq:taylor_contrast}
- \nabla \ell_{n, N} (\trueparam)
= \textstyle \int_0^1 \nabla^2  \ell_{n, N} \bigl( \trueparam  + \lambda (\hat{\theta}^{n,N}  - \trueparam) \bigr) d\lambda \, \bigl( \hat{\theta}^{n,N}  - \trueparam \bigr).  
\end{align} 
Multiplying both sides of (\ref{eq:taylor_contrast}) by an appropriate scaling factor,  we obtain: 
\begin{align*}
\mathscr{G}_{n, N} (\trueparam)	= \int_0^1 \mathscr{H}_{n, N} \bigl( \trueparam  + \lambda (\hat{\theta}^{n,N}  - \trueparam) \bigr) d\lambda \times 
\begin{bmatrix}
\sqrt{{N}/{\Delta_n^2}} (\hat{\alpha}_S^{n, N} - \alpha_S^\dagger) \\[0.2cm]
\sqrt{N} (\hat{\alpha}_R^{n, N} - \alpha_R^\dagger)  \\[0.1cm]
\sqrt{{N}/{\Delta_n}} (\hat{\beta}^{n, N} - \beta^\dagger)
\end{bmatrix},
\end{align*}
where we have defined, for $\theta \in \Theta$: 
\begin{align}
\mathscr{G}_{n, N} (\theta) :=  - M_{n, N}  \cdot \nabla \ell_{n, N} (\theta), 
\qquad 
\mathscr{H}_{n, N} (\theta) := M_{n, N} \cdot \nabla^2 \ell_{n, N} (\theta)  \cdot M_{n, N},  
\label{eq:G}
\end{align}
with the scaling term:
$$
M_{n, N} = \mathrm{diag} \Bigl[ \,  \underbrace{\sqrt{{\Delta_n^2}/{N}} \cdots \sqrt{{\Delta_n^2}/{N}}  }_{d_{\alpha_S}}, \, 
\underbrace{\sqrt{{1}/{N}} \cdots \sqrt{{1}/{N}}  }_{d_{\alpha_R}}, \,  
\underbrace{\sqrt{{\Delta_n}/{N}} \cdots \sqrt{{\Delta_n}/{N}}  }_{d_{\beta}}   \,  \Bigr]. 
$$  
We conclude from the following two results, whose proofs are given in Appendices \ref{sec:pf_prop_lln}-\ref{sec:pf_prop_clt} in the Supplementary Material \cite{supp_AoS}. 
\begin{proposition} \label{prop:lln}
Under Assumptions \ref{ass:law_dep}--\ref{ass:ident} it holds that: 
\begin{align*} 
\sup_{\lambda \in [0,1]} \Bigl|  
\mathscr{H}_{n, N} \bigl( \trueparam+ \lambda ( \hat{\theta}^{n, N} - \trueparam)  \bigr)
-  2 \Gamma (\trueparam)  
\Bigr|  \probconv 0,  \qquad n , N \to \infty,   
\end{align*}
where we have set: 
\begin{align} 
\label{eq:precision2}
\Gamma (\trueparam)  
= 
\begin{cases}
\mathrm{diag} \bigl[
\Gamma^{(\mathrm{E})}_{\alpha_R} (\trueparam), \Gamma^{(\mathrm{E})}_\beta (\trueparam) \bigr]  , & d_S = 0; \\[0.2cm] 
\mathrm{diag} \bigl[
\Gamma^{(\mathrm{H})}_{\alpha_S} (\trueparam), \, 
\Gamma^{(\mathrm{H})}_{\alpha_R} (\trueparam),
\,  
\Gamma^{(\mathrm{H})}_\beta (\trueparam) \bigr]  , & d_S \ge  1; \\  
\end{cases}
\end{align} 
\end{proposition}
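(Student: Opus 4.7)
The plan is to establish the blockwise convergence of $\mathscr{H}_{n,N}(\theta) = M_{n,N}\nabla^2\ell_{n,N}(\theta) M_{n,N}$ uniformly in $\theta$ in a neighborhood of $\trueparam$, and then invoke the consistency of $\hat{\theta}^{n,N}$ from Theorem \ref{thm:consistency} to pass to a uniform-in-$\lambda$ statement along the path $\trueparam + \lambda(\hat{\theta}^{n,N} - \trueparam)$. I focus on the hypoelliptic case $d_S \geq 1$; the elliptic case is obtained by deleting the $\alpha_S$-block. The overall strategy mirrors the contrast decomposition $\ell_{n,N} = \sum_{k=1}^4 L^{(k)}_{n,N}$ of (\ref{eq:contrast_decomp}) and the identity (\ref{eq:m_diff}), differentiating twice and applying Lemmas \ref{lemma:base_conv} and \ref{lemma:key_conv} term by term.

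For the three diagonal blocks I proceed as follows. The $\alpha_S\alpha_S$-block is already essentially computed in Step II of the consistency proof: the limit in (\ref{eq:f_a_S}), evaluated at $\theta = \trueparam$, kills the second summand (since $b_S(\alpha_S^\dagger, W) = 0$) and leaves $2\Gamma^{(\mathrm{H})}_{\alpha_S}(\trueparam)$. For the $\alpha_R\alpha_R$-block, I observe that $\Sigma$ does not depend on $\alpha_R$, so $\partial_{\alpha_R}^2\ell_{n,N}$ is dominated by the contribution from $L^{(3)}_{n,N}$, whose leading $\mathcal{O}(\Delta_n^{-1}\cdot\Delta_n)$ piece under the scaling $1/N$ converges by Lemma \ref{lemma:base_conv} to $2\Gamma^{(\mathrm{H})}_{\alpha_R}(\trueparam)$; the remaining terms factor through (\ref{eq:aux_conv_1}) and vanish at $\theta = \trueparam$. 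For the $\beta\beta$-block, I differentiate $L^{(1)}_{n,N}+L^{(4)}_{n,N}$ twice using the identities $\partial_\beta\Lambda = -\Lambda(\partial_\beta\Sigma)\Lambda$ and $\partial_\beta^2\log\det\Sigma = \mathrm{tr}(\Lambda\partial_\beta^2\Sigma) - \mathrm{tr}(\Lambda(\partial_\beta\Sigma)\Lambda(\partial_\beta\Sigma))$; applying Lemma \ref{lemma:key_conv} to the resulting quadratic forms in $\mathbf{m}^{[i]}_{j-1}(\trueparam)$ replaces $\mathbf{m}\mathbf{m}^\top$ by $\Sigma$, and the trace identities combine to give exactly $2\Gamma^{(\mathrm{H})}_{\beta}(\trueparam)$ at $\theta=\trueparam$. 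Off-diagonal blocks vanish: each mixed partial derivative, after the asymmetric scaling in $M_{n,N}$, factors through either a centered sum of the form $\tfrac{\sqrt{\Delta_n^a}}{N}\sum f(Z_{j-1}^{[i],\theta})\mathbf{m}^{[i]}_{j-1}(\trueparam)$ which is $o_{\mathbb{P}}(1)$ by (\ref{eq:aux_conv_1}), or a shrinking residual $\tfrac{\Delta_n^a}{N}\sum g(Z^{[i],\theta}_{j-1})$ with $a\geq 1$ that vanishes by Lemma \ref{lemma:base_conv}. Finally, since Lemmas \ref{lemma:base_conv} and \ref{lemma:key_conv} give \emph{uniform} convergence in $\theta\in\Theta$, and $\theta\mapsto\Gamma(\theta)$ is continuous under Assumption \ref{ass:reg_coeff}, the consistency $\hat{\theta}^{n,N}\probconv\trueparam$ promotes the pointwise limit to the required supremum over $\lambda\in[0,1]$.

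The technical heart of the argument, and what I expect to be the main obstacle, is the $\beta\beta$-block. Unlike the drift-parameter blocks, where differentiation produces linear-in-$\mathbf{m}$ terms that are easily centered, the $\beta$-derivatives of $\log\det\Sigma$ and of $\Lambda$ produce quadratic-in-$\mathbf{m}$ quantities together with deterministic trace terms that must cancel exactly to leave $2\Gamma^{(\mathrm{H})}_\beta$; this amounts to recovering the Fisher information at $\beta^\dagger$ of the Gaussian family $\mathscr{N}(\mathbf{0}_d, \Sigma(\theta,W,\mu_t))$, and keeping all lower-order remainders (in particular those generated by differentiating the $\alpha_S^\dagger\to\alpha_S$ and $\alpha_R^\dagger\to\alpha_R$ parts of $b_S, b_R$ in (\ref{eq:m_diff})) under $o_{\mathbb{P}}(1)$ control requires careful bookkeeping and repeated use of Lemma \ref{lemma:key_conv}.
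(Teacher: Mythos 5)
Your high-level organisation (casewise analysis of the blocks of the rescaled Hessian, decomposition via \eqref{eq:contrast_decomp} and \eqref{eq:m_diff}, reduction to Lemmas~\ref{lemma:base_conv} and~\ref{lemma:key_conv}, then consistency of $\hat\theta^{n,N}$) matches the paper's, and your $\alpha_S\alpha_S$-block argument via \eqref{eq:f_a_S} is fine. However, there is a genuine gap in how you dismiss the ``remaining terms,'' and it shows up precisely in the $\alpha_R\alpha_R$-block and the off-diagonal blocks. Differentiating $L^{(3)}_{n,N}$ twice in $\alpha_R$ and applying the scaling $1/N$ produces, besides the quadratic-in-$\partial\mathcal{B}$ piece you describe, the cross term
\begin{align*}
\tfrac{1}{N}\sum_{i,j}
b_S\bigl(\alpha_S,X_{t_{j-1}}^{[i]}\bigr)^\top
\Lambda_{S,j-1}^{[i]}(\theta)\,
\partial_{\alpha_R,k_1}\partial_{\alpha_R,k_2}\mathcal{B}_{j-1}^{[i]}(\theta),
\end{align*}
which arises because $\mathbf{m}_{j-1}^{[i]}(\theta)-\mathbf{m}_{j-1}^{[i]}(\trueparam)$ carries the $\Delta_n^{-1/2}b_S$ contribution in its smooth coordinates. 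This term is deterministic given $\mathcal{F}_{t_{j-1}}^N$, so it does not ``factor through \eqref{eq:aux_conv_1},'' and its nominal order along the path $\theta(\lambda)=\trueparam+\lambda(\hat\theta^{n,N}-\trueparam)$ is $n\cdot o_{\mathbb{P}}(\sqrt{\Delta_n}) = o_{\mathbb{P}}(\Delta_n^{-1/2})$, which \emph{diverges}: neither the fact that $b_S(\alpha_S^\dagger,\cdot)\equiv 0$ nor the rate from Step~II of the consistency proof is enough. The paper kills this term by an \emph{exact algebraic cancellation}, namely the matrix identity (in the spirit of Lemma~\ref{lemma:key_matrix}) $\partial_{\alpha_R}^2\mathcal{B}_{j-1}^{[i]}(\theta)^\top\bigl(\Lambda_{S,j-1}^{[i]}(\theta)\bigr)^\top = \partial_{\alpha_R}^2 V_{R,0}^\top\,\Sigma_{RR}^{-1}\,[\Sigma_{RS}\ \ \Sigma_{RR}]\,\Lambda_S^\top = \mathbf{0}_{d_R}$, which holds for \emph{every} $\theta$. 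The same identity is also what collapses $\partial_{\alpha_R,k_1}\mathcal{B}^\top\Lambda\,\partial_{\alpha_R,k_2}\mathcal{B}$ to $\partial_{\alpha_R,k_1}V_{R,0}^\top\Sigma_{RR}^{-1}\partial_{\alpha_R,k_2}V_{R,0}$ so that the surviving limit is literally $\Gamma^{(\mathrm{H})}_{\alpha_R}$; without it, Lemma~\ref{lemma:base_conv} alone only identifies the limit as $\int\mathcal{B}^\top\Lambda\mathcal{B}$, which you would still need to match to \eqref{eq:G_alpha_R}.

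A second, related omission concerns the $\beta\beta$-block, where you restrict attention to $L^{(1)}+L^{(4)}$ and standard matrix-calculus identities. But $L^{(3)}$ contributes a piece of the form $\tfrac{\Delta_n}{N}\sum_{i,j}\Delta_n^{-1}\,b_S^\top(\cdot)\,b_S$ (from squaring the $\Delta_n^{-1/2}b_S$ part of $\mathbf{m}(\theta)-\mathbf{m}(\trueparam)$), which equals $\tfrac{1}{N}\sum_{i,j}b_S^\top(\cdot)\,b_S$ and is of order $n\cdot|\alpha_S(\lambda)-\alpha_S^\dagger|^2$. Controlling this along the path $\lambda\in[0,1]$ genuinely requires the convergence rate $\hat\alpha_S^{n,N}-\alpha_S^\dagger=o_{\mathbb{P}_{\trueparam}}(\sqrt{\Delta_n})$ established in \eqref{eq:rate_a_S}; consistency alone does not suffice. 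The paper invokes this rate explicitly in Cases~III and~IV, together with the matrix cancellation in Cases~II and~IV. In short, your frame of ``prove uniform-in-$\theta$ convergence of the Hessian, then insert consistency'' cannot be correct as stated, since $\mathscr{H}_{n,N}(\theta)$ does \emph{not} converge uniformly over $\Theta$ — the two mechanisms above (exact cancellation and the $o_{\mathbb{P}}(\sqrt{\Delta_n})$ rate along the path) are precisely what tames the non-uniform pieces, and your write-up identifies neither.
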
 

\begin{proposition}  \label{prop:clt}
Under Assumptions \ref{ass:law_dep}--\ref{ass:ident} it holds that 
$
\mathscr{G}_{n, N} (\trueparam) \distconv \mathscr{N} \bigl( \mathbf{0}_{d_\theta}, 4 \Gamma (\trueparam) \bigr)$ as 
$n , N \to \infty, \,  N \Delta_n \to 0,
$  
with the matrix $\Gamma (\trueparam)$ as defined in (\ref{eq:precision2}). 
\end{proposition}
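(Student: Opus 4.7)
The plan is to prove Proposition~\ref{prop:clt} via a triangular-array martingale central limit theorem applied to the scaled score $\mathscr{G}_{n,N}(\trueparam)=-M_{n,N}\nabla\ell_{n,N}(\trueparam)$ with respect to the filtration $\{\mathcal{F}_{t_j}^N\}_{0\le j\le n}$. The key structural fact is that, conditionally on $\mathcal{F}_{t_{j-1}}^N$, the Brownian increments $\{B^{[i]}_u-B^{[i]}_{t_{j-1}}\}_{u\in(t_{j-1},t_j]}$ are independent across $i$, so that $\{\mathbf{m}_{j-1}^{[i]}(\trueparam)\}_{1\le i\le N}$ are conditionally independent and, up to a stochastic Taylor remainder whose moments decay in powers of $\Delta_n$, distributed as $\mathscr{N}(\mathbf{0}_d,\Sigma_{j-1}^{[i]})$. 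I would first differentiate $\ell_{n,N}$ at $\trueparam$ componentwise, using $\partial_\theta\Lambda=-\Lambda(\partial_\theta\Sigma)\Lambda$ and $\partial_\theta\log\det\Sigma=\mathrm{tr}(\Lambda\partial_\theta\Sigma)$, and split each block of the resulting expression into a principal centered summand (in $\mathcal{F}_{t_j}^N$-martingale-difference form) and remainders to be handled with Lemmas~\ref{lemma:base_conv}--\ref{lemma:key_conv}.

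Identifying the leading martingale increments block by block is the next step. Using $\partial_{\alpha_S,k}\mathbf{m}_{S,j-1}^{[i]}=-\partial_{\alpha_S,k}V_{S,0}/\sqrt{\Delta_n}+O(\sqrt{\Delta_n})$ and $\partial_{\alpha_S,k}\mathbf{m}_{R,j-1}^{[i]}=0$, the $\alpha_S$ block rescaled by $\sqrt{\Delta_n^2/N}$ has leading term
\begin{align*}
\xi_{i,j}^{\alpha_S,k}=-2\sqrt{\Delta_n/N}\,\bigl(\partial_{\alpha_S,k}V_{S,0}\bigr)^\top\bigl(\Lambda_{SS}\mathbf{m}_{S,j-1}^{[i]}+\Lambda_{SR}\mathbf{m}_{R,j-1}^{[i]}\bigr).
\end{align*}
For $\alpha_R$, the derivative $\partial_{\alpha_R,k}\mathbf{m}_{j-1}^{[i]}$ has the structure $-\sqrt{\Delta_n}\,\mathcal{B}(Z_{j-1}^{[i]})$ in the form of \eqref{eq:B}, so Lemma~\ref{lemma:key_matrix} collapses the leading increment to
\begin{align*}
\xi_{i,j}^{\alpha_R,k}=2\sqrt{\Delta_n/N}\,\bigl(\partial_{\alpha_R,k}V_{R,0}\bigr)^\top\Sigma_{RR}^{-1}\mathbf{m}_{R,j-1}^{[i]}.
\end{align*}
For $\beta$, combining the quadratic $\mathbf{m}^\top\Lambda\mathbf{m}$ and $\log\det\Sigma$ contributions yields the centered quadratic form
\begin{align*}
\xi_{i,j}^{\beta,k}=-\sqrt{\Delta_n/N}\,\bigl(\mathbf{m}_{j-1}^{[i]\top}\Lambda(\partial_{\beta,k}\Sigma)\Lambda\mathbf{m}_{j-1}^{[i]}-\mathrm{tr}(\Lambda\partial_{\beta,k}\Sigma)\bigr).
\end{align*}

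Then I would verify the two hypotheses of the martingale CLT (e.g. Jacod--Shiryaev, Thm.~VIII.3.2). The conditional quadratic characteristic $\sum_j\mathbb{E}[(\sum_i\xi_{i,j})(\sum_i\xi_{i,j})^\top\mid\mathcal{F}_{t_{j-1}}^N]$ is computed using the conditional Gaussianity of $\mathbf{m}_{j-1}^{[i]}(\trueparam)$: the block identity $\Lambda\Sigma=I$ gives $\Lambda_{SS}\Sigma_{SR}+\Lambda_{SR}\Sigma_{RR}=0$, which kills the $\alpha_S$/$\alpha_R$ cross-covariance, while vanishing of third Gaussian moments kills the linear-quadratic $\alpha_S$/$\beta$ and $\alpha_R$/$\beta$ cross-covariances. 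The diagonal entries converge by Lemma~\ref{lemma:base_conv} to the stated limits; in particular the Schur-complement identity $\Lambda_{SS}=4\Sigma_{SS}^{-1}$ (from $\Sigma_{SS}-\Sigma_{SR}\Sigma_{RR}^{-1}\Sigma_{RS}=a_S/12$) reproduces the factor $4$ in $\Gamma^{(\mathrm{H})}_{\alpha_S}$, and the identity $\mathrm{tr}(\Lambda\partial_\beta\Sigma\Lambda\partial_\beta\Sigma)$ reproduces $\Gamma^{(\mathrm{H})}_\beta$. The Lindeberg condition is verified via a fourth-moment bound since each $\xi_{i,j}$ is of order $\sqrt{\Delta_n/N}$ times polynomially bounded functions of $(X_{t_{j-1}}^{[i]},\mu_{t_{j-1}}^N)$, whose moments are controlled under Assumptions~\ref{ass:initial_dist}--\ref{ass:reg_coeff}.

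The main obstacle will be controlling the remainder terms, and this is where the design condition $N\Delta_n\to 0$ becomes critical. Two types of errors must be absorbed: (i) the discrepancy between the true SDE transition and the \eqref{eq:LG-1} proxy, which requires higher-order It\^o--Taylor expansions of $X_{t_j}^{[i]}-X_{t_{j-1}}^{[i]}$ under Assumptions~\ref{ass:test_function}--\ref{ass:reg_coeff}; and (ii) the fluctuation $\mu_{t_j}^N-\mu_{t_j}$ of the empirical measure against its McKean--Vlasov limit, controlled by standard propagation-of-chaos bounds of the form $\mathbb{E}[\mathcal{W}_2(\mu_{t}^N,\mu_t)^2]\lesssim 1/N$ combined with the Lipschitz-type estimates \eqref{eq:f_bd}. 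The $\beta$ block, with rescaling $\sqrt{N/\Delta_n}$, is where one typically needs $\sqrt{N\Delta_n}\cdot O(1)\to 0$, i.e.~$N\Delta_n\to 0$; the $\alpha_S$ block, with the aggressive rescaling $\sqrt{N/\Delta_n^2}$, is the most delicate and requires that the $\mathcal{O}(\Delta_n^2)$ stochastic-Taylor correction in $\mathbf{m}_S$ is included in \eqref{eq:def_m} so that the post-rescaling bias stays of order $\sqrt{N\Delta_n}\to 0$. Lemma~\ref{lemma:key_conv} delivers the convergence of mixed sums of $\mathbf{m}$-moments needed to make these control arguments rigorous.
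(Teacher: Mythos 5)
Your proposal is correct and follows essentially the same route as the paper: a triangular-array martingale CLT with respect to $\{\mathcal{F}_{t_j}^N\}$, the same identification of leading increments in each parameter block (matching the paper's $\widetilde{\mathscr{G}}_{j-1}^{[i],m}$ with the identity from Lemma \ref{lemma:key_matrix} for the $\alpha_R$ block and the matrix-calculus reduction for the $\beta$ block), the same off-diagonal cancellations via $\Lambda\Sigma = I$ and vanishing odd Gaussian moments, and the same fourth-moment Lyapunov bound. You also correctly isolate the two remainder sources and the role of the design condition $N\Delta_n\to 0$, with the only small overstatement being that the $\alpha_S$-block bias actually scales as $\sqrt{N\Delta_n^2}$ rather than $\sqrt{N\Delta_n}$, so the binding constraint comes from the $\beta$ block as you note.
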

\section{Conclusion} \label{sec:conclusion}
We have studied parameter estimation for the class of weakly interacting hypoelliptic diffusions determined in (\ref{eq:ips-1}) and which includes a variety of important models used in applications. E.g., our numerical implementations considered the interacting FitzHugh-Nagumo model (\ref{eq:I-FHN}). In brief, we developed a conditionally non-degenerate Gaussian approximation for the intractable particle dynamics thus forming an analytically available joint likelihood proxy, and we analysed the asymptotic properties of the associated contrast estimator. While inferential procedures relying on the Euler-Maruyama approximation -- a conditionally degenerate scheme -- requires the  specific structure of  $dX_{S, t}^{[i]} = X_{R, t}^{[i]} dt$ under the partial observation regime to recover the latent variables via finite differences, our own development -- a non-degenerate approximate density -- allows for filtering or for incorporation to general Bayesian approaches,  thus offering great flexibility for statistical inference for a very broad class of IPS models. Numerical experiments confirmed the effectiveness of the developed likelihood in both complete and partial observation regimes. The experiments presented in Section \ref{sec:I-UL-num} demonstrated the advantages of the proposed likelihood approach against alternatives in the current literature. 

Potential future research directions include consideration of IPSs of \emph{highly degenerate} diffusions, as studied in \cite{igu:24} within a non-interacting setting. Weakening the design condition of $\Delta_n = o (N^{-1})$ 
on the step-size would also be important for applications, similarly to corresponding recent advances for standard non-interacting ergodic SDEs   \citep{kessler1997estimation, uchi:12, igu:bj, igu:ejs}. Finally, statistical inference in a \emph{low frequency observation regime} for IPSs, i.e.~the case where the 
data step-size is fixed, is also an important direction. The objective here will be the development of accurate closed-form density approximations which move beyond Gaussian proxies, thus extending results obtained in the literature for linear, in the sense of McKean, multivariate SDEs (see e.g.~\cite{ait:08, li:13, IgBe:25}).

\subsection*{Funding}
YI is supported by the EPSRC grant Prob\_AI (EP/Y028783/1). GP is partially supported by an ERC-EPSRC Frontier Research Guarantee through Grant No. EP/X038645, ERC Advanced Grant No. 247031 and a Leverhulme Trust Senior Research Fellowship, SRF$\backslash$R1$\backslash$241055.
%

%

\begin{center}
SUPPLEMENTARY MATERIAL
\end{center} 

\appendix 
This material collects technical results to support the proofs for the main analytics results - Theorems \ref{thm:consistency} \& \ref{thm:clt}. Appendix \ref{sec:aux_results} provides some new results to facilitate the analysis in the later sections. Appendix \ref{sec:pfs_conv} contains proofs for two key convergence Lemmas \ref{lemma:base_conv} \& \ref{lemma:key_conv} given in Section \ref{sec:pfs}. Appendices \ref{sec:pf_prop_lln} \& \ref{sec:pf_prop_clt} show Propositions \ref{prop:lln} \& \ref{prop:clt}, respectively, used to prove the CLT in Section  \ref{sec:pf_clt}.

The definition of notations used in this material basically follows those in the main text, in particular, provided in Sections \ref{sec:intro} \& \ref{sec:pre}. We here introduce additional notations. For simplicity, we frequently use $\mathbf{m}_{j-1}^{[i]} \equiv \mathbf{m}_{j-1}^{[i]} (\trueparam)$. We also introduce a set of random variables $\mathcal{S}_t, \, t \in [0,  T]$, as:  
\begin{align*}
\mathcal{S}_t := 
\bigl\{ Y=Y (\Delta)\in \mathbb{R},  \, \Delta > 0  \   \bigl| \,   
\mathcal{F}_t^N\textrm{-}\mathrm{measurable}\  \mathrm{and} \  
\{  \mathbb{E}_{\trueparam} |  Y  (\Delta)|^p \}^{1/p} \lesssim \Delta  \, \mathrm{\, for \, all\,}  \, p \ge 1  \bigr\}. 
\end{align*} 
Finally, we introduce a notation for iterated stochastic integrals. For $q \in \mathbb{N}$, $i_1, \ldots, i_q \in \{1 , \ldots N\}$, $k_1, \ldots, k_q \in \{0, 1, \ldots, d_B \}$ and $j = 1, \ldots, n$: 
\begin{align*}
I_{k_1, \ldots, k_q}^{[i_1, \ldots, i_q]} (j) := 
\int_{t_{j-1}}^{t_j} \int_{t_{j-1}}^{s_q} \cdots \int_{t_{j-1}}^{s_3} \int_{t_{j-1}}^{s_2} d B_{k_1, s_1}^{[i_1]} d B_{k_2, s_2}^{[i_2]} \cdots 
d B_{k_{q-1}, s_{q-1}}^{[i_{q-1}]} d B_{k_q, s_q}^{[i_q]},
\end{align*} 
where we interpret $B_{0, s}^{[i]} \equiv s$ and $\textstyle I_{k_1}^{[i_1]} (j) = \int_{t_{j-1}}^{t_j} d B_{k_1, s}^{[i_1]}$ when $q = 1$. 

\section{Auxiliary results} 
\label{sec:aux_results}
\subsection{First Result}
\begin{lemma} \label{lemma:bds_aux}
Let Assumptions \ref{ass:initial_dist}, \ref{ass:growth} hold. For all $p \ge 1$, all $0 \le s < t \le T$ with $t-s \in (0,1)$ and all $\theta \in \Theta$, we have the following for some constant $C > 0$: 
\begin{align}  
\sup_{t \in [0, T]}  \sup_{i = 1, \ldots, N}
\mathbb{E}_{\theta} 
|X_t^{[i]}|^p   & < C; 
\label{eq:bd_m_ips}  
\\ 
\sup_{t \in [0, T]} \mathbb{E}_{\theta} 
\bigl[ \mathcal{W}_p (\mu_t^N, \delta_0)^q \bigr] & < C, \qquad q \ge p; 
\label{eq:bd_wass_ips}  \\  
\sup_{i = 1, \ldots, N} 
\mathbb{E}_\theta  | X_t^{[i]} - X_{s}^{[i]} |^p & \le C (t-s)^{p/2}.  
\label{eq:bd_t_diff}
\end{align}%
%
\end{lemma}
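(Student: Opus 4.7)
The three bounds are standard moment estimates for weakly interacting SDEs with globally Lipschitz diffusion and a one-sided/locally Lipschitz drift, so the plan is to follow the canonical template of~\cite{dos:22, dos:19}, with care taken to close the Grönwall argument in presence of the measure dependence. The main device is It\^o's formula applied to $\varphi(x) = (1+|x|^{2})^{p}$ (rather than $|x|^{2p}$, to avoid singularities at the origin when $2p<2$), combined with exchangeability of the particles.

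For \eqref{eq:bd_m_ips}, I would apply It\^o's formula to $\varphi(X_t^{[i]})$ and take expectations. The drift term is handled by writing
\begin{align*}
\langle X_t^{[i]}, V_{0}(\alpha,X_t^{[i]},\mu_t^{N})\rangle
&= \langle X_t^{[i]}, V_{0}(\alpha,X_t^{[i]},\mu_t^{N}) - V_{0}(\alpha,0,\mu_t^{N})\rangle
+ \langle X_t^{[i]}, V_{0}(\alpha,0,\mu_t^{N})\rangle,
\end{align*}
where the first summand is controlled by the one-sided Lipschitz condition in Assumption~\ref{ass:growth}(b), giving a contribution of order $|X_t^{[i]}|^{2}$, and the second summand is bounded by Young's inequality together with $|V_{0}(\alpha,0,\mu_t^{N})| \le |V_{0}(\alpha,0,\delta_{0})| + C\mathcal{W}_{2}(\mu_t^{N},\delta_{0})$, obtained from Assumption~\ref{ass:growth}(b). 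The diffusion contribution to the It\^o correction is handled directly by the global Lipschitz condition in Assumption~\ref{ass:growth}(a), yielding a bound of order $1 + |X_t^{[i]}|^{2} + \mathcal{W}_{2}(\mu_t^{N},\delta_0)^{2}$. The key step is to take the average $\tfrac{1}{N}\sum_{i}\mathbb{E}_{\theta}\varphi(X_t^{[i]})$ and to use the elementary identity $\mathcal{W}_{2}(\mu_t^{N},\delta_{0})^{2} = \tfrac{1}{N}\sum_{i}|X_t^{[i]}|^{2}$, which makes the Wasserstein term expressible in terms of the same quantity being controlled. Assumption~\ref{ass:initial_dist} handles the initial condition, and a Grönwall inequality then yields uniform bounds on $\sup_{t\le T}\tfrac{1}{N}\sum_{i}\mathbb{E}_{\theta}|X_t^{[i]}|^{2p}$; exchangeability of the particles upgrades this to the per-particle bound~\eqref{eq:bd_m_ips}.

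The bound \eqref{eq:bd_wass_ips} then follows immediately from \eqref{eq:bd_m_ips}: since $q \ge p$, Jensen's inequality applied to the convex function $u\mapsto u^{q/p}$ yields
\begin{align*}
\mathcal{W}_{p}(\mu_t^{N},\delta_{0})^{q}
= \Bigl(\tfrac{1}{N}\sum_{i=1}^{N}|X_t^{[i]}|^{p}\Bigr)^{q/p}
\le \tfrac{1}{N}\sum_{i=1}^{N}|X_t^{[i]}|^{q},
\end{align*}
and taking expectation and supremum over $t\in[0,T]$ concludes.

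For \eqref{eq:bd_t_diff} I would split $X_t^{[i]} - X_s^{[i]}$ into its drift and martingale parts. The drift contribution is estimated by Hölder's inequality, $\bigl|\int_{s}^{t}V_{0}(\alpha,X_{\tau}^{[i]},\mu_{\tau}^{N})d\tau\bigr|^{p} \le (t-s)^{p-1}\int_{s}^{t}|V_{0}(\alpha,X_{\tau}^{[i]},\mu_{\tau}^{N})|^{p}d\tau$, where the integrand is bounded by a polynomial in $|X_\tau^{[i]}|$ and $\mathcal{W}_{2}(\mu_\tau^{N},\delta_{0})$ via the locally Lipschitz condition, and \eqref{eq:bd_m_ips}--\eqref{eq:bd_wass_ips} yield the required $(t-s)^{p}\le (t-s)^{p/2}$ bound. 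The martingale contribution is handled via the Burkholder-Davis-Gundy inequality, which gives a bound of order $(t-s)^{p/2}$ times moments of the diffusion coefficient; these are again controlled by \eqref{eq:bd_m_ips}--\eqref{eq:bd_wass_ips} using the global Lipschitz property in Assumption~\ref{ass:growth}(a). The main obstacle throughout is purely bookkeeping: closing the Grönwall loop in step one requires the averaging trick to handle the measure dependence, and a standard stopping-time localisation may be needed to rigorously justify the interchange of expectation and stochastic integration when only a locally Lipschitz drift is available.
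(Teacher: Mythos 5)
Your proposal is correct, and for \eqref{eq:bd_wass_ips} and \eqref{eq:bd_t_diff} it matches the paper essentially line for line: the paper also derives \eqref{eq:bd_wass_ips} from \eqref{eq:bd_m_ips} via the power-mean/Jensen inequality $\mathbb{E}_\theta[\mathcal{W}_p(\mu_t^N,\delta_0)^q]\le \tfrac{1}{N}\sum_i\mathbb{E}_\theta|X_t^{[i]}|^q$, and proves \eqref{eq:bd_t_diff} by the same drift/martingale split with H\"older and Burkholder--Davis--Gundy. The one genuine divergence is \eqref{eq:bd_m_ips}: the paper simply invokes \cite[Proposition 2.5]{chen:24} under Assumption \ref{ass:growth}, whereas you reproduce the underlying argument in full --- It\^o on $(1+|x|^2)^p$, one-sided Lipschitz to kill the drift, linear growth of the diffusion from Assumption \ref{ass:growth}(a), the identity $\mathcal{W}_2(\mu_t^N,\delta_0)^2=\tfrac{1}{N}\sum_i|X_t^{[i]}|^2$ to close the measure dependence, particle averaging plus Gr\"onwall, and exchangeability to pass from the averaged bound to the per-particle one. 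Your self-contained route buys transparency (and makes explicit exactly where the locally-Lipschitz-as-opposed-to-globally-Lipschitz drift is harmless), at the cost of length; the paper's citation is quicker but pushes the work into an external result. Both yield the same conclusion, and your remark about stopping-time localisation to justify the It\^o step rigorously is a detail the cited proposition would also need. There is no gap.
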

\begin{remark}
The results collected in Lemma \ref{lemma:bds_aux} are also used in the works \citep{amo:24, amo:23}, where  Lipschitz conditions are assumed on the drift and diffusion coefficients. In our case, the proof of Lemma \ref{lemma:bds_aux} is obtained under the weaker locally Lipschitz condition on the drift function, given in Assumption \ref{ass:growth}. 
\end{remark}
\noindent 
\textit{Proof of Lemma \ref{lemma:bds_aux}.} 
First,  (\ref{eq:bd_m_ips}) follows immediately from \cite[Proposition 2.5]{chen:24} under Assumption \ref{ass:growth}. We now make use of  (\ref{eq:bd_m_ips}) to get (\ref{eq:bd_wass_ips}) since 
$ \textstyle 
\mathbb{E}_{\theta} 
\bigl[ \mathcal{W}_p (\mu_t^N, \delta_0)^q \bigr] 
\le \tfrac{1}{N} \sum_{i = 1}^N \mathbb{E}_\theta |X_t^{[i]}|^q \le C  
$
for some positive constant $C$ independent of $t$ and the index $i$. It remains to show (\ref{eq:bd_t_diff}). We have that: 
\begin{align*}
\mathbb{E}_\theta \big| X_t^{[i]} - X_s^{[i]} \big|^p 
\le C \Bigl( \mathbb{E}_\theta 
\Bigl| \int_s^t V_0 (\alpha, X_u^{[i]}, \mu_u^N) du \Bigr|^p 
+ \sum_{j = 1}^{d_B} \mathbb{E}_\theta 
\Bigl| \int_s^t V_j (\beta, X_u^{[i]}, \mu_u^N) dB_{j, u}^{[i]} \Bigr|^p 
\Bigr). 
\end{align*}
For the first term on the RHS above, we apply H\"older's inequality to obtain: 
\begin{align*}
\mathbb{E}_\theta  
\Bigl| \int_s^t V_0 (\alpha, X_u^{[i]}, \mu_u^N) du \Bigr|^p  \le (t-s)^{p-1} \int_s^t  \mathbb{E}_\theta 
\bigl| V_0 (\alpha, X_u^{[i]}, \mu_u^N) \bigr|^p  du \le C (t-s)^p, 
\end{align*}
where we used Assumption \ref{ass:growth} with the bounds (\ref{eq:bd_m_ips}) and (\ref{eq:bd_wass_ips}). Similarly, Burkholder-Davis-Gundy and H\"older's inequalities yield that for $j = 1, \ldots, d_B$:
\begin{align*}
\mathbb{E}_\theta
\Bigl| \int_s^t V_j (\beta, X_u^{[i]}, \mu_u^N) dB_{j, u}^{[i]} \Bigr|^p 
\le C (t-s)^{\tfrac{p}{2}-1} \times 
\int_s^t \mathbb{E}_\theta
\bigl| V_j (\beta, X_u^{[i]}, \mu_u^N) \bigr|^p du 
\le C (t-s)^{\tfrac{p}{2}},  
\end{align*}
where we used Assumption \ref{ass:growth} with  (\ref{eq:bd_m_ips}) and (\ref{eq:bd_wass_ips}). Thus, we conclude. 
\subsection{Second Result} \label{sec:moments}
\begin{lemma} \label{lemma:moments}
Let Assumptions \ref{ass:law_dep}--~\ref{ass:test_function} hold. For any $1 \le j \le n$, $1 \le i_1, i_2, i_3, i_4 \le N$ and $1 \le k_1, k_2, k_3, k_4 \le d$, we have that: 
\begin{align}
& \label{eq:first_moment} \mathbb{E}_{\trueparam} \bigl[ \mathbf{m}_{j-1}^{[i_1], k_1}  \,\big|\,\mathcal{F}_{t_{j-1}}^N \bigr] 
= e_{j-1}^{(1)} (\Delta_n^{3/2});  \\[0.2cm]  
& \mathbb{E}_{\trueparam} 
\bigl[ \mathbf{m}_{j-1}^{[i_1], k_1} \mathbf{m}_{j-1}^{[i_2], k_2}  \,\big|\, \mathcal{F}_{t_{j-1}}^N \bigr] 
= \Sigma_{k_1 k_2} (Z_{j-1}^{[i_1]}) \times \mathbf{1}_{i_1 = i_2}
+ e_{j-1}^{(2)} (\Delta_n),  \label{eq:second_moment} 
\end{align}
\begin{align}
\label{eq:fourth_moment} 
\mathbb{E}_{\trueparam} &\big[ 
\mathbf{m}_{j-1}^{[i_1], k_1} 
\mathbf{m}_{j-1}^{[i_1], k_2}
\mathbf{m}_{j-1}^{[i_3], k_3} 
\mathbf{m}_{j-1}^{[i_4], k_4} \,\big|\, \mathcal{F}_{t_{j-1}}^N \big] 
\\ & \nonumber = \Sigma_{k_1 k_2} (Z_{j-1}^{[i_1]}) \Sigma_{k_3 k_4} (Z_{j-1}^{[i_3]}) \times  \mathbf{1}_{\substack{i_1 = i_2 \\ i_3 = i_4}}  
+ \Sigma_{k_1 k_3} (Z_{j-1}^{[i_1]}) \Sigma_{k_2 k_4} (Z_{j-1}^{[i_2]}) \times  \mathbf{1}_{\substack{i_1 = i_3 \\ i_2 = i_4}} 
\\ & \nonumber \qquad + \Sigma_{k_1 k_4} (Z_{j-1}^{[i_1]}) \Sigma_{k_2 k_3} (Z_{j-1}^{[i_2]}) \times  \mathbf{1}_{\substack{i_1 = i_4 \\ i_2 = i_3}}  + e_{j-1}^{(3)} (\Delta_n), 
%
%
\end{align} 
%
for some $e_{j-1}^{(i)} \in \mathcal{S}_{t_{j-1}}, \, i = 1, 2, 3$.  
\end{lemma}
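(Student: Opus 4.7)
The plan is to write $\mathbf{m}_{j-1}^{[i]} = \mathbf{G}_{j-1}^{[i]} + \mathbf{R}_{j-1}^{[i]}$, where $\mathbf{G}_{j-1}^{[i]}$ is a conditionally centred Gaussian vector whose covariance matches $\Sigma(Z_{j-1}^{[i]})$, and $\mathbf{R}_{j-1}^{[i]}$ is a controlled remainder. Applying It\^o's formula to the coefficients of (\ref{eq:ips-1}) and freezing at $Z_{j-1}^{[i]}$ gives, for the rough component,
\[
\mathbf{m}_{R,j-1}^{[i]} = \sum_{m=1}^{d_B} V_{R,m}(Z_{j-1}^{[i]})\,\tfrac{I_m^{[i]}(j)}{\sqrt{\Delta_n}} + \mathbf{R}_{R,j-1}^{[i]},
\]
and, for the smooth component (after subtracting the $\Delta_n$- and $\Delta_n^2$-corrections in (\ref{eq:def_m})),
\[
\mathbf{m}_{S,j-1}^{[i]} = \sum_{m=1}^{d_B} \mathscr{L}_m[V_{S,0}](Z_{j-1}^{[i]})\,\tfrac{I_{m,0}^{[i,i]}(j)}{\sqrt{\Delta_n^3}} + \mathbf{R}_{S,j-1}^{[i]}.
\]
One further round of It\^o--Taylor expansion decomposes each component of $\mathbf{R}_{j-1}^{[i]}$ into a sum of doubly-iterated stochastic integrals (pure $\mathcal{F}_{t_{j-1}}^N$-martingale increments with $\mathcal{F}_{t_{j-1}}^N$-measurable coefficients) plus a drift-type contribution lying in $\mathcal{S}_{t_{j-1}}$ of size $\Delta_n^{3/2}$; under Assumptions \ref{ass:test_function}--\ref{ass:reg_coeff} and Lemma \ref{lemma:bds_aux}, the overall $L^p$-norm of $\mathbf{R}_{j-1}^{[i]}$ is $\mathcal{O}(\sqrt{\Delta_n})$ for every $p\ge 1$.

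\textbf{Gaussian moments via particle independence.} The leading term $\mathbf{G}_{j-1}^{[i]}$ is, conditionally on $\mathcal{F}_{t_{j-1}}^N$, a linear combination of the Gaussian variates $\bigl\{B_{m,t_j}^{[i]}-B_{m,t_{j-1}}^{[i]},\, \int_{t_{j-1}}^{t_j}(B_{m,s}^{[i]}-B_{m,t_{j-1}}^{[i]})\,ds\bigr\}_{m=1}^{d_B}$ with $\mathcal{F}_{t_{j-1}}^N$-measurable coefficients, so $(\mathbf{G}_{j-1}^{[i]})_{1\le i\le N}$ is jointly Gaussian conditionally; because $B^{[i]}$ and $B^{[i']}$ are independent for $i\neq i'$, the vectors $\mathbf{G}_{j-1}^{[i]}$ and $\mathbf{G}_{j-1}^{[i']}$ are then conditionally independent. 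A direct computation using $\mathbb{E}[I_m^{[i]}(j)^2]=\Delta_n$, $\mathbb{E}[I_{m,0}^{[i,i]}(j)^2]=\Delta_n^3/3$, $\mathbb{E}[I_m^{[i]}(j)\,I_{m,0}^{[i,i]}(j)]=\Delta_n^2/2$, combined with the identity $a_S=\partial_{x_R}^\top V_{S,0}\,a_R\,(\partial_{x_R}^\top V_{S,0})^\top$ from (\ref{eq:a_S}), shows that all blocks of the conditional covariance of $\mathbf{G}_{j-1}^{[i]}$ coincide with those of $\Sigma(Z_{j-1}^{[i]})$, and hence $\mathrm{Cov}(\mathbf{G}_{j-1}^{[i_1]},\mathbf{G}_{j-1}^{[i_2]}\mid \mathcal{F}_{t_{j-1}}^N) = \Sigma(Z_{j-1}^{[i_1]})\mathbf{1}_{i_1=i_2}$.

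\textbf{The three identities and the main obstacle.} Identity (\ref{eq:first_moment}) is immediate: $\mathbf{G}_{j-1}^{[i_1]}$ has zero conditional mean, the martingale piece of $\mathbf{R}_{j-1}^{[i_1]}$ has zero conditional mean, and only the drift-type remainder of order $\Delta_n^{3/2}$ survives. For (\ref{eq:second_moment}) and (\ref{eq:fourth_moment}), expanding the product of $\mathbf{G}+\mathbf{R}$ factors produces a main Gaussian contribution together with cross terms containing at least one $\mathbf{R}$. The main Gaussian contributions are handled by Isserlis' theorem applied to the conditionally Gaussian vector: the single covariance in (\ref{eq:second_moment}) gives $\Sigma_{k_1k_2}(Z_{j-1}^{[i_1]})\mathbf{1}_{i_1=i_2}$, while for (\ref{eq:fourth_moment}) the three Wick pairings $(i_1,i_2)(i_3,i_4)$, $(i_1,i_3)(i_2,i_4)$, $(i_1,i_4)(i_2,i_3)$, together with the particle-level independence from Step~2, reproduce exactly the three indicator terms on the RHS. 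The principal obstacle is proving that the cross terms collectively lie in $\mathcal{S}_{t_{j-1}}$ of size $\Delta_n$: a naive Cauchy--Schwarz bound on $\mathbf{R}$ yields only $\mathcal{O}(\sqrt{\Delta_n})$, which is insufficient. To close this gap we substitute the multiplicity-two expansion of $\mathbf{R}$ from Step~1 into each cross term and apply the multiplication rule for iterated stochastic integrals; upon taking conditional expectation, only the resulting deterministic time-integrals of order $\Delta_n^k$ with $k\ge 1$ survive, yielding the declared $e_{j-1}^{(2)}(\Delta_n)$ and $e_{j-1}^{(3)}(\Delta_n)$ error terms.
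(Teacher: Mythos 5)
Your proposal is correct and follows essentially the same route as the paper: decompose $\mathbf{m}_{j-1}^{[i]}$ into a leading term that is conditionally Gaussian with covariance $\Sigma(Z_{j-1}^{[i]})$ plus a remainder, read off the Gaussian moments via Wick/Isserlis and particle-level conditional independence, and then handle the cross terms (which Cauchy--Schwarz alone bounds only at order $\Delta_n^{1/2}$) by performing one further It\^o--Taylor step on the remainder and exploiting the vanishing of odd-weight iterated-integral products under conditioning. This is precisely the paper's decomposition $\mathbf{m}_{j-1}^{[i]}=C_{j-1}^{[i]}+D_{j-1}^{[i]}$ followed by the refined split $D=\widetilde D+\mathcal{E}$, so there is nothing substantively different to compare.

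Two small remarks. First, your description of the cross-term obstacle as applying to ``terms containing at least one $\mathbf{R}$'' is slightly loose: for terms with two or more $\mathbf{R}$ factors the naive Cauchy--Schwarz bound already delivers $\mathcal{O}(\Delta_n)$ (as the paper exploits in its case (iii) for the fourth moment); the refined iterated-integral argument is only needed when exactly one $\mathbf{R}$ appears. Second, after the further expansion the conditional expectation of $C\cdot\widetilde D$ is not identically zero -- contributions with a zero index in the iterated integral (an $\int\,ds$ leg) survive but are of size $\mathcal{O}(\Delta_n)$ -- so the conclusion should be ``the nonzero contributions are deterministic and of order $\Delta_n$'' rather than ``only the deterministic time-integrals survive''; the end result is the same and matches the claimed $e_{j-1}^{(2)}(\Delta_n)$, $e_{j-1}^{(3)}(\Delta_n)$.
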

\begin{remark}
It is critical that (\ref{eq:first_moment}) holds for all coordinates $k$ of each particle. Inclusion of the deterministic $\mathcal{O} (\Delta_n^2) $ term in the drift approximation of the smooth component leads to the above bound. Note that if one excludes such a term from $\mathbf{m}_{S, j-1}^{[i], k_1} (\trueparam)$, then the RHS of (\ref{eq:first_moment}) would instead be $\mathcal{O} (\Delta_n^{1/2})$, for $1 \le k_1 \le d_S$.   
\end{remark}

\noindent 
\textit{Proof of Lemma \ref{lemma:moments}.}
\\
\noindent \textit{Property of $\mathbf{m}_{j-1}^{[i]}$.} First, we study the term $\mathbf{m}_{j-1}^{[i]}$ to show the stated claims. We introduce several notations. For $Z_{j}^{[i], \theta} = (\theta, X_{t_j}^{[i]}, \mu_{t_j}^N), \, 0 \le j \le n$, $1 \le i \le N$ with $\theta = (\alpha_S, \alpha_R, \beta)$, we write
\begin{align}
\begin{aligned}
V_{R, m} (Z_j^{[i], \theta}) & \equiv \mathscr{V}_{R, m}^{(i)} \bigl( \theta, X_{t_j}^{[1]}, \ldots, X_{t_j}^{[N]}); \\ 
\mathscr{L}_m [V_{S, 0} (\alpha_S, \cdot) ](Z_j^{[i], \theta}) & \equiv 
\mathscr{V}_{S,m}^{\mathscr{L}, (i)} \bigl( \theta, X_{t_j}^{[1]}, \ldots, X_{t_j}^{[N]} \bigr),  
\end{aligned} \qquad 0 \le m \le d_B, 
\end{align}
where $\mathscr{V}_{R, m}^{(i)}: \Theta \times (\mathbb{R}^d)^N \to \mathbb{R}^{d_R}$ and $\mathscr{V}_{S, m}^{\mathscr{L}, (i)}: \Theta \times (\mathbb{R}^d)^N \to \mathbb{R}^{d_S}$, which can be fully specified from the definitions of $V_{R, m}$ and $\mathscr{L}_m [V_{S, 0}]$ with (\ref{eq:L}). We write $\mathbb{X}_{t_j}^{-k} = \{ X_{t_j}^{[i]}\}_{i = 1, \ldots, k-1, k+1, \ldots, N}$ for $1 \le k \le N$, and express the mappings $X_{t_j}^{[k]} \mapsto \mathscr{V}_{R, m}^{(i)} (\theta, X_{t_j}^{[1]}, \ldots, X_{t_j}^{[N]})$ and $X_{t_j}^{[k]} \mapsto \mathscr{V}_{S, m}^{\mathscr{L}, (i)} (\theta, X_{t_j}^{[1]}, \ldots, X_{t_j}^{[N]})$ by  $\mathscr{V}_{R, m}^{(i)} (X_{t_j}^{[k]}; \theta, \mathbb{X}_{t_j}^{-k})$ and $\mathscr{V}_{S, m}^{\mathscr{L}, (i)} (X_{t_j}^{[k]}; \theta, \mathbb{X}_{t_j}^{-k})$, respectively. Recall $Z_j^{[i]} \equiv Z_j^{[i], \trueparam}$. Then, under Assumptions \ref{ass:law_dep} \& \ref{ass:test_function}, stochastic Taylor expansion of 
$X_{t_{j}}^{[i], N} \equiv 
\bigl[  (X_{S, t_j}^{[i], N})^\top , (X_{R, t_j}^{[i], N})^\top \bigr]^\top$ yields $\mathbf{m}_{j-1}^{[i]} = C_{j-1}^{[i]} + D_{j-1}^{[i]}$ 
with: 
\begin{align}
C_{j-1}^{[i]} = 
\sum_{m = 1}^{d_B} 
\begin{bmatrix} 
\Delta_n^{-\tfrac{3}{2}} \cdot 
\mathscr{L}_m [V_{S, 0} (\alpha_S^\dagger, \cdot)] 
(Z_{j-1}^{[i]}) \, I_{m, 0}^{[i, i]} (j) \\[0.3cm]
\Delta_n^{-\tfrac{1}{2}} \cdot 
V_{R, m} (Z_{j-1}^{[i]}) \, I_{m}^{[i]} (j) 
\end{bmatrix}, 
\  
D_{j-1}^{[i]} = 
\begin{bmatrix}
\Delta_n^{-\tfrac{3}{2}} \cdot  \rho_S^{[i]} (j) \\[0.3cm]
\Delta_n^{-\tfrac{1}{2}} \cdot  \rho_R^{[i]} (j) 
\end{bmatrix},  \label{eq:decomp_m}
\end{align} 
where: 
%
\begin{gather}
\rho_{S}^{[i]} (j)  
= \sum_{k = 1}^N \sum_{m_1, m_2 = 0}^{d_B} 
\int_{t_{j-1}}^{t_j} \int_{t_{j-1}}^{u} \int_{t_{j-1}}^v  
\mathscr{L}_{m_1} \bigl[ \mathscr{V}_{S, m_2}^{\mathscr{L}, (i)} (\cdot; \trueparam, \mathbb{X}_{w}^{-k}) \bigr] (Z_w^{[k]}) d B_{m_1, w}^{[k]} \, d B_{m_2, v}^{[i]} du; \label{eq:rho_S}  \\ 
\rho_{R}^{[i]} (j)  
 = \sum_{k = 1}^N \sum_{m_1, m_2 = 0}^{d_B} 
\int_{t_{j-1}}^{t_j} \int_{t_{j-1}}^{u} 
\mathscr{L}_{m_1} \bigl[ \mathscr{V}_{R, m_2}^{(i)} (\cdot; \trueparam, \mathbb{X}_{u}^{-k}) \bigr] (Z_u^{[k]}) d B_{m_1, v}^{[k]} \, d B_{m_2, u}^{[i]}.   \label{eq:rho_R} 
\end{gather}
Under Assumptions \ref{ass:law_dep}, \ref{ass:initial_dist}, \ref{ass:growth} and \ref{ass:test_function}, it is shown that: for any $p \ge 2$, there exist $C_1, C_2 > 0$ s.t. for any $k \in 1, \ldots, N$ and $0\le m_1, m_2 \le d_B$,   
\begin{align} 
\sup_{t \in [0, T]} \mathbb{E}_{\trueparam} \Bigl| \mathscr{L}_{m_1} \bigl[ \mathscr{V}_{S, m_2}^{\mathscr{L}, (i)} (\cdot; \theta, \mathbb{X}_{t}^{-k}) \bigr] (Z_t^{[k]}) \Bigr|^p < C_1 (\mathbf{1}_{k = i } + \tfrac{1}{N}); \label{eq:intergrand_S_bd}  \\ 
\sup_{t \in [0, T]}  \mathbb{E}_{\trueparam} 
\Bigl| \mathscr{L}_{m_1} \bigl[ \mathscr{V}_{R, m_2}^{(i)} (\cdot; \theta, \mathbb{X}_{t}^{-k}) \bigr] (Z_t^{[k]}) \Bigr|^p < C_2 (\mathbf{1}_{k = i } + \tfrac{1}{N}).    \label{eq:intergrand_R_bd} 
\end{align}  
We here check (\ref{eq:intergrand_R_bd}), and (\ref{eq:intergrand_S_bd}) is obtained from a similar argument. First, under Assumptions \ref{ass:law_dep}, \ref{ass:initial_dist}, \ref{ass:growth} and \ref{ass:test_function}, we have from Lemma \ref{lemma:bds_aux} that for any $p \ge 2$, there exists a constant $C > 0$ such that for any $\ell_1, \ell_2, \ell_3 = 1, \ldots, d,$
\begin{align} \label{eq:coeff_bd}
\sup_{t \in [0, T]} \sup_{k = 1, \ldots, N} \max \bigl\{ 
\mathbb{E}_{\trueparam} | a_{\ell_1 \ell_2} (Z_t^{[k]}) |^p, \ \ 
\textstyle\sum_{0 \le m \le d_B} \mathbb{E}_{\trueparam} | V_m^{\ell_3} (Z_t^{[k]}) |^p    
  \bigr\} < C, 
\end{align} 
where we used the following estimate: for any $\varphi (\theta, \cdot, \cdot): \mathbb{R}^d \times \mathbb{R}^d \to \mathbb{R}$ having the polyimial growth uniformly in $\theta \in \Theta$ and $q \ge 2$,   
\begin{align}  
\mathbb{E}_{\trueparam} \Bigl| \tfrac{1}{N} \sum_{\ell = 1}^N \varphi (\theta , X_t^{[i]}, X_t^{[\ell]})\Bigr|^q  
& \le \tfrac{1}{N} \sum_{\ell = 1}^N \mathbb{E}_{\trueparam} \Bigl| \varphi (\theta , X_t^{[i]}, X_t^{[\ell]})\Bigr|^q \ \ (\because \mathrm{Jensen's \, inequality})  \nonumber \\ 
&  \le C \quad (\because \mathrm{polynomial \, growth} \  \& \  \mathrm{Lemma \, \ref{lemma:bds_aux}}). 
\end{align}
Furthermore, under Assumption \ref{ass:law_dep}, we have that: for $e = 1, \ldots, d$,  
\begin{align*}
& \partial_e \{ \mathscr{V}_{R, m_2}^{(i)} (\cdot; \theta, \mathbb{X}_t^{-k}) \} (X_t^{[k]}) \\  
& = \Bigl\{ \partial_e \{ V_{R, m_2}^I (\theta, \cdot)\}(X_t^{[i]}) 
+ \tfrac{1}{N} \sum_{\ell = 1}^N \partial_e
\bigl\{ V_{R, m}^{II} (\theta, \cdot, X_t^{[\ell]}) \bigr\} (X_t^{[i]})
 \Bigr\} \times \mathbf{1}_{k = i} \\ 
& \qquad \qquad  + \tfrac{1}{N} \partial_e 
\bigl\{ V_{R, m}^{II} (\theta, X_t^{[i]}, \cdot) \bigr\} (X_t^{[k]}). 
\end{align*} 
and thus, under Assumption \ref{ass:test_function},  
\begin{align} \label{eq:first_deriv_bd}
\sup_{t \in [0, T]} \mathbb{E}_{\trueparam}
\bigl| \partial_e \{ \mathscr{V}_{R, m_2}^{(i)} (\cdot; \theta, \mathbb{X}_t^{-k}) \} (X_t^{[k]})  \bigr|^p \le C (\mathbf{1}_{k = i} + \tfrac{1}{N}). 
\end{align} 
Similarly, we have that: for any $1 \le e_1, e_2 \le d$ and $p \ge 2$, there exists $C >0$ s.t. for any $k = 1, \ldots, N$, 
\begin{align} \label{eq:second_deriv_bd}
\sup_{t \in [0, T]} \mathbb{E}_{\trueparam}
\bigl| \partial_{e_1}  \partial_{e_2}  \{ \mathscr{V}_{R, m_2}^{(i)} (\cdot; \theta, \mathbb{X}_t^{-k}) \} (X_t^{[k]})  \bigr|^p \le C (\mathbf{1}_{k = i} + \tfrac{1}{N}). 
\end{align}  
From (\ref{eq:coeff_bd}), (\ref{eq:first_deriv_bd}) and (\ref{eq:second_deriv_bd}), we conclude (\ref{eq:intergrand_R_bd}). 
\\ 

\noindent
\textit{Proof of (\ref{eq:first_moment})}.  
Due to (\ref{eq:intergrand_S_bd}) and (\ref{eq:intergrand_R_bd}), the stochastic integrals in $\rho_S^{[i]} (j)$ and $\rho_R^{[i]} (j)$ are true martingales. Noting that those  martingales are $0$ under the conditional expectation $\mathbb{E}_{\trueparam} [\cdot | \mathcal{F}_{t_{j-1}}^N]$, we get that 
%
\begin{align}
& \mathbb{E}_{\trueparam} 
\bigl| 
\mathbb{E}_{\trueparam} \bigl[ \rho_{S}^{[i]} (j) | \mathcal{F}_{t_{j-1}}^N \bigr]   \bigr|^p \nonumber  \\   
& \qquad 
\le \Delta_n^{3p - 3} \sum_{k = 1}^N 
\int_{t_{j-1}}^{t_j}  \int_{t_{j-1}}^u \int_{t_{j-1}}^v 
\mathbb{E}_{\trueparam} \Bigl|
\mathscr{L}_{0} \bigl[ \mathscr{V}_{S, 0}^{\mathscr{L}, (i)} (\cdot; \trueparam, \mathbb{X}_{w}^{-k}) \bigr] (Z_w^{[k]})  \Bigr|^p  dw dv du  \nonumber \\ 
& \qquad \le C \Delta_n^{3p} \label{eq:m_S_bd} 
\end{align}
%
%
where we have changed the order of integration and applied Jensen's inequality iteratively and then used (\ref{eq:intergrand_S_bd}) in the last inequality. 
%
%
Similar arguments as above together with (\ref{eq:intergrand_R_bd}) yield that: for any $p \ge 2$, there exists a constant $C > 0$ such that  
\begin{align}
\mathbb{E}_{\trueparam} 
\bigl| 
\mathbb{E}_{\trueparam} \bigl[ \rho_{R}^{[i]} (j) | \mathcal{F}_{t_{j-1}}^N \bigr]   \bigr|^p \le C \Delta_n^{2p}.  
\label{eq:m_R_bd} 
\end{align}  	
Noticing that $\mathbb{E} [ C_{j-1}^{[i]} | \mathcal{F}_{t_{j-1}}^N] = \mathbf{0}_d$ due to $I_{m, 0}^{[i, i]} (j)$ and $I_m^{[i]} (j)$,  we conclude (\ref{eq:first_moment}) from (\ref{eq:m_S_bd}) and (\ref{eq:m_R_bd}) with (\ref{eq:decomp_m}). \\ 

\noindent 
\textit{Proof of (\ref{eq:second_moment})}.  Noticing that: 
\begin{gather*} 
\mathbb{E} \bigl[ I_{k_1}^{[i_1]} (j) I_{k_2}^{[i_2]} (j)  | \mathcal{F}_{t_{j-1}}^N \bigr] = \Delta_n \cdot \mathbf{1}_{\substack{i_1 = i_2 \\ k_1 = k_2}}, \quad 
\mathbb{E} \bigl[ I_{k_1}^{[i_1]} (j) I_{k_2, 0}^{[i_2, i_2]} (j)  | \mathcal{F}_{t_{j-1}}^N \bigr] = \tfrac{\Delta_n^2}{2} \cdot \mathbf{1}_{\substack{i_1 = i_2 \\ k_1 = k_2}}; \\[0.2cm] 
\mathbb{E} \bigl[ I_{k_1, 0}^{[i_1]} (j) I_{k_2, 0}^{[i_2]} (j)  | \mathcal{F}_{t_{j-1}}^N \bigr] = \tfrac{\Delta_n^3}{3} \cdot \mathbf{1}_{\substack{i_1 = i_2 \\ k_1 = k_2}},   
\end{gather*} 
we have from (\ref{eq:decomp_m}) that: 
\begin{align}
& \mathbb{E}_{\trueparam} 
\bigl[ \mathbf{m}_{j-1}^{[i_1], k_1} \mathbf{m}_{j-1}^{[i_2], k_2} | \mathcal{F}_{t_{j-1}}^N \bigr]  
= \Sigma_{k_1 k_2} (Z_{j-1}^{[i_1]}) \cdot \mathbf{1}_{i_1 = i_2}  
\label{eq:second_m_pf}  
\\
& \qquad \nonumber   + 
\mathbb{E}_{\trueparam} 
\bigl[ C_{j-1}^{[i_1], k_1} D^{[i_2], k_2}_{j-1} +  D_{j-1}^{[i_1], k_1} C^{[i_2], k_2}_{j-1}
+ D_{j-1}^{[i_1], k_1} D^{[i_2], k_2}_{j-1} | \mathcal{F}_{t_{j-1}}^N \bigr].  
\end{align}
It follows from (\ref{eq:rho_S}), (\ref{eq:rho_R}) and Lemma \ref{lemma:bds_aux} that for any $1 \le j \le n$ and $1 \le i \le N$, $\mathbb{E}_{\trueparam}  | D_{j-1}^{[i], k} |^p  \le C_p \, \Delta_n^{p/2}$, $p \ge 1$. We then use Jensen and Cauchy-Schwarz inequalities to get that for any $p \ge 1$, 
$
\mathbb{E}_{\trueparam} 
\bigl| \mathbb{E}_{\trueparam} \bigl[  D_{j-1}^{[i_1], k_1} D_{j-1}^{[i_2], k_2}  | \mathcal{F}_{t_{j-1}}^N  \bigr] \bigr|^p   \le C_p \, \Delta_n^p.  
$ 
Subsequently, we study the rest of terms in the RHS of (\ref{eq:second_m_pf}). A further stochastic Taylor expansion for the terms (\ref{eq:rho_S}) and (\ref{eq:rho_R}) with Lemma \ref{lemma:bds_aux} lead to: it holds under Assumptions \ref{ass:law_dep}, \ref{ass:initial_dist}, \ref{ass:growth}, \ref{ass:test_function} that $\rho_S^{[i]} (j) = \widetilde{\rho}_S^{[i]} (j) + \mathcal{E}_{S}^{[i]} (j)$ and $\rho_R^{[i]} (j) = \widetilde{\rho}_R^{[i]} (j) + \mathcal{E}_{R}^{[i]} (j)$ for some $\mathcal{E}_{S}^{[i]}(j), \mathcal{E}_{R}^{[i]} (j)$ such that $\mathbb{E}_{\trueparam}  | \mathcal{E}_{S}^{[i]}(j) |^p  \le C \Delta_n^{5p/2}$ and $\mathbb{E}_{\trueparam} | \mathcal{E}_{R}^{[i]}(j) |^p  \le C \Delta_n^{3p/2}$, $p \ge 1$, where:
%
%
\begin{align*}
\widetilde{\rho}_{S}^{[i]} (j)  
& = \sum_{k = 1}^N \sum_{m_1, m_2 = 0}^{d_B} 
\mathscr{L}_{m_1} \bigl[ \mathscr{V}_{S, m_2}^{\mathscr{L}, (i)} (\cdot; \trueparam, \mathbb{X}_{t_{j-1}}^{-k}) \bigr] (Z_{j-1}^{[k]}) 
I_{m_1, m_2, 0}^{[k, i, i]} (j); \\ 
\widetilde{\rho}_{R}^{[i]} (j)  
& = \sum_{k = 1}^N \sum_{m_1, m_2 = 0}^{d_B} 
\mathscr{L}_{m_1} \bigl[ \mathscr{V}_{R, m_2}^{(i)} (\cdot; \trueparam, \mathbb{X}_{t_{j-1}}^{-k}) \bigr] (Z_{j-1}^{[k]}) I_{m_1, m_2}^{[k, i]}(j).   
\end{align*}
Writing: 
\begin{align}  
\label{eq:tilde_D}
\widetilde{D}^{[i]}_{j-1} = \bigl [ \Delta_n^{- 3/2} \cdot \widetilde{\rho}_S^{[i]} (j)^\top,    \Delta_n^{- 1/2} \cdot   \widetilde{\rho}_R^{[i]} (j)^\top   \bigr]^\top \in \mathbb{R}^d, 
\end{align}  we have that: 
\begin{align*}
\mathbb{E}_{\trueparam} 
\bigl[ C_{j-1}^{[i_1], k_1} D_{j-1}^{[i_2], k_2} | \mathcal{F}_{t_{j-1}}^N \bigr]
&=      \mathbb{E}_{\trueparam} 
\bigl[ C_{j-1}^{[i_1], k_1} \widetilde{D}_{j-1}^{[i_2], k_2} | \mathcal{F}_{t_{j-1}}^N \bigr] + \mathcal{E}_{j-1} (\Delta_n) \\ &= \mathcal{\widetilde{E}}_{j-1} (\Delta_n) + \mathcal{E}_{j-1} (\Delta_n)
\end{align*} 
for $\mathcal{E}_{j-1},  \widetilde{\mathcal{E}}_{j-1} \in \mathcal{S}_{t_{j-1}}$, where we used (\ref{eq:intergrand_S_bd})-(\ref{eq:intergrand_R_bd}) and the following estimates: (see e.g. \citep[Lemma 2.1.5]{mil:21}) for $j = 1, \ldots, n$: 
\begin{align*}
\mathbb{E} \bigl[ I_{m_1}^{[i_1]} (j) \, I_{m_2, m_3, 0}^{[i_2, i_3, i_4]} (j) |  \mathcal{F}_{t_{j-1}}^N \bigr] & = 0, \quad  \mathbb{E} \bigl[ I_{m_1}^{[i_1]} (j) \, I_{m_2, m_3}^{[i_2, i_3]} (j) |  \mathcal{F}_{t_{j-1}}^N \bigr] = 0; \\ 
\mathbb{E} \bigl[ I_{m_1, 0}^{[i_1, i_1]} (j) \, I_{m_2, m_3, 0}^{[i_2, i_3, i_4]} (j) |  \mathcal{F}_{t_{j-1}}^N \bigr] & = 0, \quad  \mathbb{E} \bigl[ I_{m_1, 0}^{[i_1, i_1]} (j) \, I_{m_2, m_3}^{[i_2, i_3]} (j) |  \mathcal{F}_{t_{j-1}}^N \bigr] = 0, 
\end{align*} 
for any $m_1, m_2, m_3 \in \{1, \ldots, d_B \}$ and $i_1, i_2, i_3, i_4 \in \{1, \ldots, N \}$. We now conclude (\ref{eq:second_moment}).  \\ 

\noindent 
\textit{Proof of (\ref{eq:fourth_moment}).} Due to the decomposition $\mathbf{m}_{j-1}^{[i]} = C_{j-1}^{[i]} + D_{j-1}^{[i]}$, specified in (\ref{eq:decomp_m}), the LHS of (\ref{eq:fourth_moment}) is expressed as a summation of the following terms: (i) $ \textstyle \mathbb{E}_{\trueparam} \bigl[ \prod_{q = 1}^4 C_{j-1}^{[i_q], k_q} | \mathcal{F}_{t_{j-1}}^N \bigr]$; (ii) only one $D_{j-1}^{[i_q], k_q}$ is involved in the product; (iii) otherwise. For the term (i), we have from the correlation between Gaussian increments that: 
\begin{align*}
& \mathbb{E}_{\trueparam} \bigl[ \prod_{q = 1}^4 C_{j-1}^{[i_q], k_q} | \mathcal{F}_{t_{j-1}}^N \bigr] 
= \Sigma_{k_1 k_2} (Z_{j-1}^{[i_1]}) \Sigma_{k_3 k_4} (Z_{j-1}^{[i_3]}) \mathbf{1}_{\substack{i_1 = i_2, i_3 = i_4}} \\ & \qquad + \Sigma_{k_1 k_3} (Z_{j-1}^{[i_1]}) \Sigma_{k_2 k_4} (Z_{j-1}^{[i_2]}) \mathbf{1}_{\substack{i_1 = i_3, i_2 = i_4}}  
+  \Sigma_{k_1 k_4} (Z_{j-1}^{[i_1]}) \Sigma_{k_2 k_3} (Z_{j-1}^{[i_2]}) \mathbf{1}_{\substack{i_1 = i_4, i_2 = i_3}}.    
\end{align*}
For the last case (iii), we first note that $\| C_{j-1}^{[i], k} \|_{L_p}$ and $\| D_{j-1}^{[i], k} \|_{L_p}, \, p \ge 1,$ are $\mathcal{O} (1)$ and $\mathcal{O} (\Delta_n^{1/2})$, respectively and each term in case (iii) includes at least two $D_{j-1}$. We then conclude from Cauchy-Schwartz inequality that the terms in case (iii) are all evaluated as $\mathcal{O} (\Delta_n)$ in $L_1$-norm.  

For the second case (ii), wlog we consider $\textstyle \mathbb{E}_{\trueparam} [ D_{j-1}^{[i_1], k_1} \prod_{q = 2}^4 C_{j-1}^{[i_q], k_q}    | \mathcal{F}_{t_{j-1}}^N]$. 
Following the argument in the proof of (\ref{eq:second_moment}), we have that $D_{j-1}^{[i_1], k_1} = \widetilde{D}_{j-1}^{[i_1], k_1} + e_{j-1} (\Delta_n)$ for some $e_{j-1} \in \mathcal{S}_{t_{j-1}}$, where $\widetilde{D}_{j-1}^{[i_1], k_1}$ is specified as in (\ref{eq:tilde_D}). We then have that 
$$
\textstyle \mathbb{E}_{\trueparam} [ \widetilde{D}_{j-1}^{[i_1], k_1} \prod_{q = 2}^4 C_{j-1}^{[i_q], k_q} | \mathcal{F}_{t_{j-1}}^N] = e_{j-1} (\Delta_n)
$$ for $e_{j-1} \in \mathcal{S}_{t_{j-1}}$ by noticing that for any $i_2, i_3, i_4, \ell_1, \ell_2 \in \{1, \ldots, N\}$, $m_1, m_2 \in \{1, \ldots, d_B \}$, $k_2, k_3, k_4 \in \{1, \ldots, d \}$ and $j = 1, \ldots, n$:
\begin{align*}
\mathbb{E}_{\trueparam} \bigl[ I_{m_1, m_2}^{[\ell_1, \ell_2]} (j) \prod_{q = 2}^4 C_{j-1}^{[i_q], k_q} | \mathcal{F}_{t_{j-1}}^N \bigr] = 0, \qquad 
\mathbb{E}_{\trueparam} \bigl[ I_{m_1, m_2, 0}^{[\ell_1, \ell_2, \ell_2]} (j) \prod_{q = 2}^4 C_{j-1}^{[i_q], k_q} | \mathcal{F}_{t_{j-1}}^N \bigr] = 0.  
\end{align*} 
Thus, we have that $\textstyle \mathbb{E}_{\trueparam} [ D_{j-1}^{[i_1], k_1} \prod_{q = 2}^4 C_{j-1}^{[i_q], k_q}    | \mathcal{F}_{t_{j-1}}^N] = e_{j-1} (\Delta_n)$ for $e_{j-1} \in \mathcal{S}_{t_{j-1}}$ and then conclude the terms in case (ii) are characterised as $\mathcal{O} (\Delta_n)$ in 
$L_1$ -norm. Proof of (\ref{eq:fourth_moment}) is now complete. 

We thus conclude Lemma \ref{lemma:moments}.  
%
%
\section{Proofs of Key Convergence Results} \label{sec:pfs_conv}
\subsection{Proof of Lemma \ref{lemma:base_conv}} \label{sec:pf_base_conv}

We first show (\ref{eq:conv_aux}) for each $\theta \in \Theta$. We follow the strategy used in the proof \citep[Lemma 5.2]{amo:23} but work under the non-globally Lipschitz condition on the drift function (Assumption \ref{ass:growth}). We have that
$ \textstyle 
\tfrac{\Delta_n}{N} \sum_{i = 1}^N \sum_{j = 1}^n f (\theta, X_{t_{j-1}}^{[i]}, \mu_{t_{j-1}}^N) 
- 
\int_0^T \mathbb{E}_{\mu_t} 
\bigl[ f (\theta, W, \mu_t) \bigr] dt 
= F_1 + F_2 + F_3 
$
with: 
\begin{align*}
F_1 & = \tfrac{\Delta_n}{N} \sum_{i = 1}^N \sum_{j = 1}^n f (\theta, X_{t_{j-1}}^{[i]}, \mu_{t_{j-1}}^N) 
- \tfrac{1}{N} \sum_{i = 1}^N  \int_0^T 
f (\theta, X_{t}^{[i]}, \mu_{t}^N) dt; \\  
F_2 & = \tfrac{1}{N} \sum_{i = 1}^N  \int_0^T 
f (\theta, X_{t}^{[i]}, \mu_{t}^N) dt - 
\tfrac{1}{N} \sum_{i = 1}^N \int_0^T f (\theta, \bar{X}_t^i, \mu_t) dt;  \\ 
F_3 & = \tfrac{1}{N} \sum_{i = 1}^N \int_0^T f (\theta, \bar{X}_t^i, \mu_t) dt -  
\int_0^T \mathbb{E}_{\mu_t} \bigl[ f  (\theta, W, \mu_t) \bigr] dt, 
\end{align*} 
where $\bar{X}_t^{i}$ is the solution of McKean-Vlasov SDE (\ref{eq:mv_I}) with the Brownian motion $B$ replaced by $B^{[i]}$ and ${X}_0$ by $X_0^{[i]}$. We will show $F_k \probconv 0$, $k = 1, 2, 3$ as $n, N \to \infty$. $F_3 \probconv 0$ is immediately obtained from the law of large numbers. For the first term $F_1$, we have from the property (\ref{eq:f_bd}) of $f$ that: 
\begin{align*}
& \mathbb{E}_{\trueparam} |F_1| 
\le \tfrac{1}{N} \sum_{i = 1}^N \sum_{j = 1}^n 
\int_{t_{j-1}}^{t_j} 
\mathbb{E}_{\trueparam} 
\bigl| 
f (\theta, X_{t_{j-1}}^{[i]}, \mu_{t_{j-1}}^N) 
- f (\theta, X_{t}^{[i]}, \mu_{t}^N) 
\bigr|  dt \\
& \le C \tfrac{1}{N} 
\sum_{i = 1}^N \sum_{j = 1}^n 
\int_{t_{j-1}}^{t_j} 
\Bigl\{ \bigl\| 
| X_{t_{j-1}}^{[i]} - X_{t}^{[i]} |
+ \mathcal{W}_2 (\mu_{t_{j-1}}^N, \mu_{t}^N) 
\bigr\|_2 \\ 
& \qquad 
\times 
\bigl\| 1 + |X_{t_{j-1}}^{[i]}|^{\ell_1} 
+ | X_{t}^{[i]}  |^{\ell_1} 
+ \mathcal{W}_{2}  (\mu_{t_{j-1}}^N, \delta_0)^{\ell_2} 
+ \mathcal{W}_{2}  (\mu_{t}^N, \delta_0)^{\ell_2}  
\bigr\|_2 \Bigr\} dt \\[0.2cm] 
& \le C n \Delta_n \times \Delta_n^{1/2} =  C T \Delta_n^{1/2},
\end{align*}
where we have applied the Cauchy-Schwartz inequality and Lemma \ref{lemma:bds_aux} -- the finite moments of $X^{[i]}_t$ uniformly in $t \in [0, T]$ and $\| X_{t_{j-1}}^{[i]} - X_t^{[i]} \|_2 \le C \Delta_n^{1/2}$ for all $t \in [t_{j-1}, t_j]$ and $1 \le j \le n$. Thus, $\mathbb{E}_{\trueparam} |F_1|\to 0$ as $n \to \infty$ and then $F_1 \probconv 0$. Similarly, for the second term $F_2$, we have that:
\begin{align*}
& \mathbb{E}_{\trueparam}  |F_2| 
\le \tfrac{1}{N} 
\sum_{i = 1}^N \int_0^T \mathbb{E}_{\trueparam}  
\bigl| 
f (\theta, X_t^{[i]}, \mu_t^N) - f (\theta, \bar{X}_t^{i}, \mu_t)
\bigr|  dt \\ 
& \le C \tfrac{1}{N} 
\sum_{i = 1}^N \int_0^T 
\Bigl\{ \bigl\| 
| X_{t}^{[i]} - \bar{X}_{t}^{i} |
+ \mathcal{W}_2 (\mu_{t}^N, \mu_{t}) 
\bigr\|_2 \\ 
& \qquad \qquad \qquad \qquad \qquad 
\times 
\bigl\| 1 + |X_{t}^{[i]}|^{\ell_1} 
+ | \bar{X}_{t}^{i}  |^{\ell_1} 
+ \mathcal{W}_{2} (\mu_{t}^N, \delta_0)^{\ell_2}
+ \mathcal{W}_{2} (\mu_{t}, \delta_0)^{\ell_2} 
\bigr\|_2 \Bigr\} dt \\ 
& \le C T N^{- q(d)}, 
\end{align*}
where $q (d)$ is a positive number depending on the state dimension $d$ of each particle. In the last inequality, we made use of the following three results: Lemma \ref{lemma:bds_aux}, the time-uniform finite moments bound of McKean-Vlasov SDE under Assumptions \ref{ass:initial_dist}, \ref{ass:growth} \citep[Theorem 3.3]{dos:19}, the \emph{propagation of chaos (PoC)} under Assumptions \ref{ass:initial_dist}, \ref{ass:growth} \citep[Proposition 3.1]{dos:22}, specifically,   
$
\sup_{1 \le i \le N} \mathbb{E} \bigl[ \sup_{t \in [0, T]} | X_t^{[i]} -  \bar{X}_t^i |^2 \bigr] 
\le C \times R (N, d), 
$ 
with 
\begin{align*}
R (N, d) = 
\begin{cases}
N^{-1/2} & d < 4; \\ 
N^{-1/2} \log (N) & d = 4; \\ 
N^{- 2/d} & d > 4.    
\end{cases} 
\end{align*}
Also, in the above, to bound the term $\mathcal{W}_2 (\mu_t^N, \mu_t)$, we considered: 
$ \| \mathcal{W}_2 (\mu_t^N, \mu_t) \|_2 \le \| \mathcal{W}_2 (\mu_t^N, \bar{\mu}^N_t) \|_2  + \| \mathcal{W}_2 (\bar{\mu}_t^N, \mu_t) \|_2$, where $\bar{\mu}^N_t$ is the empirical measure from $N$-independent samples $\{ \bar{X}_t^i \}_i$. Then, we have from PoC that $\textstyle \mathbb{E}_{\trueparam} [ \mathcal{W}_2 (\mu_t^N, \bar{\mu}_t^N)^2] \le \tfrac{1}{N} \sum_{1 \le i \le N} 
\mathbb{E}_{\trueparam} |X_t^{[i]} - \bar{X}_t^i |^2 \le C \times R (N, d)$ for some constant $C > 0$ independent of $t \in [0, T]$. We also obtain that $\mathbb{E}_{\trueparam} [\mathcal{W}_2 (\bar{\mu}_t^N, \mu_t)^2] \le C \times R (N, d)$ by applying the result \cite[Theorem 5.8]{carmona:18} with the time-uniform finite moments of McKean-Vlasov SDE. Thus, we have $\mathbb{E}_{\trueparam} |F_2| 
\probconv 0$, and we conclude the pointwise convergence of (\ref{eq:conv_aux}). To prove the uniform convergence w.r.t $\theta \in \Theta$, it sufficies to show that the sequence of $C(\Theta; \mathbb{R})$-valued random variable 
$ \textstyle 
\bigl\{ \mathcal{S}_{n, N} (\theta) \bigr\}_{n, N} \equiv 
\bigl\{  \tfrac{\Delta_n}{N} \sum_{i = 1}^N \sum_{j = 1}^n f (\theta, X_{t_{j-1}}^N, \mu_{t_{j-1}}^N) \bigr\}_{n, N} 
$ is tight. Under the property (\ref{eq:R_3}) of the function $f \in \mathcal{R}$ together with Lemma \ref{lemma:bds_aux}, it holds that for all $n, N$: 
\begin{align*}
\mathbb{E}_{\trueparam} 
\Bigl[ \sup_{\theta \in \Theta}
\bigl| \nabla_\theta \mathcal{S}_{n, N} (\theta)  \bigr|  \Bigr]
\le \tfrac{\Delta_n}{N} \sum_{i = 1}^N \sum_{j = 1}^n 
\mathbb{E}_{\trueparam} \Bigl[ \sup_{\theta \in \Theta} 
\bigl|  \nabla_\theta f (\theta, X_{t_{j-1}}^N, \mu_{t_{j-1}}^N) \bigr| \Bigr] \le T C, 
\end{align*}
for some $C \ge 0$, which is  sufficient for $\bigl\{ \mathcal{S}_{n, N} (\theta) \bigr\}$ being a tight sequence in the metric space $\bigl( C(\Theta; \mathbb{R}), \, \| \cdot \|_\infty  \bigr) $.  We thus conclude Lemma \ref{lemma:base_conv}. 
\subsection{Proof of Lemma \ref{lemma:key_conv}}  \label{sec:pf_key_conv}
We first show (\ref{eq:aux_conv_1}) and (\ref{eq:aux_conv_2}) for fixed $\theta$, i.e. pointwise convergence. To get~(\ref{eq:aux_conv_1}), for fixed $\theta$, we rewrite the LHS of (\ref{eq:aux_conv_1}) as $\textstyle \sum_{j = 1}^n G_{j}^{N, k_1} (\theta)$ with 
$ \textstyle 
G_{j}^{N, k_1} (\theta) := 
\tfrac{\sqrt{\Delta_n}}{N}  \sum_{i = 1}^N  
g (Z_{j-1}^{[i], \theta}) \mathbf{m}_{j-1}^{[i], k_1}.  
$
\, 
$G_j^{N, k_1} (\theta)$ is $\mathcal{F}_{t_j}^N$-measurable, so it suffices to show from \citep[Lemma 9]{genon:93} that 
\begin{align*}
\sum_{j = 1}^n \mathbb{E}_{\trueparam} [ G_j^{N, k_1} (\theta) | \mathcal{F}_{t_{j-1}}^N ] \probconv 0, \quad  
\sum_{j = 1}^n \mathbb{E}_{\trueparam} [ (G_j^{N, k_1} (\theta))^2 | \mathcal{F}_{t_{j-1}}^N ] \probconv 0.  
\end{align*}
Indeed, we have from Lemmas \ref{lemma:base_conv} and \ref{lemma:moments} that, for any $\theta \in \Theta$:  
\begin{align*}
& \sum_{j = 1}^n \mathbb{E}_{\trueparam} \bigl[ G_j^{N, k_1} (\theta) \,\big|\, \mathcal{F}_{t_{j-1}}^N \bigr] 
=\tfrac{\sqrt{\Delta_n}}{N} 
\sum_{i = 1}^N \sum_{j = 1}^n g (Z_{j-1}^{[i], \theta}) \times e_{j-1}^{(1), k_1} (\Delta_n^{3/2}) \probconv 0; \\[0.2cm] 
& \sum_{j = 1}^n \mathbb{E}_{\trueparam} \bigl[ \bigl( G_j^{N, k_1} (\theta) \bigr)^2 \,\big|\, \mathcal{F}_{t_{j-1}}^N \bigr] 
= \tfrac{1}{N} \times \tfrac{\Delta_n}{N} 
\sum_{i = 1}^N \sum_{j = 1}^n g (Z_{j-1}^{[i], \theta}) \times \Sigma_{k_1 k_1} (Z_{j-1}^{[i]})
\\
& \qquad \qquad \qquad \qquad 
+ \tfrac{\Delta_n}{N^2} 
\sum_{\substack{1 \le  i_1, i_2 \le N \\ i_1 \neq i_2}} \sum_{j = 1}^n g (Z_{j-1}^{[i_1], \theta}) g (Z_{j-1}^{[i_2], \theta}) \times e_{j-1}^{(2), k_1} (\Delta_n) \probconv 0,  
\end{align*} 
where $e_{j-1}^{(\iota), k_1} \in \mathcal{S}_{t_{j-1}}, \, \iota = 1, 2$. We next prove  (\ref{eq:aux_conv_2}) for any fixed $\theta$. 
We write the LHS of (\ref{eq:aux_conv_2}) as 
$ \textstyle \sum_{j = 1}^n H_{j}^{N, k_1 k_2} (\theta)
$
with 
$ \textstyle 
H_j^{N, k_1 k_2} (\theta) := \tfrac{\Delta_n}{N}  \sum_{i = 1}^N g (Z_{j-1}^{[i], \theta})  \,   \mathbf{m}_{j-1}^{[i], k_1}  
\, \mathbf{m}_{j-1}^{[i], k_2}.  
$
Then, Lemmas \ref{lemma:base_conv}-\ref{lemma:moments} give that: 
\begin{align*}
\sum_{j = 1}^n \mathbb{E}_{\trueparam} &\big[ H_j^{N, k_1 k_2} (\theta) \,\big|\, \mathcal{F}_{t_{j-1}}^N \big] 
\\[-0.3cm] &= \tfrac{\Delta_n}{N}  \sum_{i = 1}^N \sum_{j = 1}^n
g (Z_{j-1}^{[i], \theta})  \Sigma_{k_1 k_2} (Z_{j-1}^{[i]})
+ \tfrac{\Delta_n}{N}  \sum_{i = 1}^N \sum_{j = 1}^n 
g (Z_{j-1}^{[i], \theta}) e_{j-1}^{(2), k_1, k_2} (\Delta_n)  \\ 
& \qquad \qquad \probconv \int_0^T \mathbb{E}_{\mu_t} 
\bigl[ g (\theta, W, \mu_t) \Sigma_{k_1 k_2} (\trueparam, W, \mu_t) \bigr] dt;
\end{align*}
\begin{align*}
& \sum_{j = 1}^n \mathbb{E}_{\trueparam} \bigl[ \bigl( H_j^{N, k_1 k_2} (\theta) \bigr)^2 \,\big|\, \mathcal{F}_{t_{j-1}} \bigr] \\[-0.2cm] &\,\, = \tfrac{\Delta_n^2}{N^2} \sum_{i_1, i_2 = 1}^N \sum_{j = 1}^n
g(Z_{j-1}^{[i_1], \theta}) g (Z_{j-1}^{[i_2], \theta})  \times  
\mathbb{E}_{\trueparam} \bigl[ \mathbf{m}_{j-1}^{[i_1], k_1} \mathbf{m}_{j-1}^{[i_1], k_2} \mathbf{m}_{j-1}^{[i_2], k_1} \mathbf{m}_{j-1}^{[i_2], k_2}
| \mathcal{F}_{t_{j-1}}^N \bigr] \probconv 0, 
\end{align*}
where $e_{j-1}^{(2), k_1, k_2} \in \mathcal{S}_{t_{j-1}}$. Uniform convergence then follows from the tightness of random sequences (LHS of (\ref{eq:aux_conv_1}), 
(\ref{eq:aux_conv_2})) in $\bigl( C (\Theta; \mathbb{R}), \| \cdot \|_\infty \bigr)$ given the properties of $g\in\mathcal{R}$ and Assumptions \ref{ass:growth}--\ref{ass:reg_coeff}. The proof of Lemma \ref{lemma:key_conv} is now complete. 

\section{Proof of Proposition \ref{prop:lln}} \label{sec:pf_prop_lln} 

We first introduce some notation. We write: 
$
\mathcal{I}_{\alpha_S} \equiv \{1, \ldots, d_{\alpha_S} \}, \  
\mathcal{I}_{\alpha_R} \equiv \{d_{\alpha_S} + 1, \ldots, d_{\alpha} \}, \   
\mathcal{I}_{\beta} \equiv \{d_{\alpha} + 1, \ldots, d_\theta \}
$ and: 
\begin{align} 
\label{eq:L_S}
\Lambda_{S, j-1}^{[i]} \equiv  		\begin{bmatrix} 
\Lambda_{SS, j-1}^{[i]} (\trueparam)  
&
\Lambda_{SR, j-1}^{[i]} (\trueparam)   
\end{bmatrix}   \in \mathbb{R}^{d_S \times d}, \qquad  
1 \le i \le N, \ 1 \le j \le n.  
\end{align}   
We also introduce a generic notation for classes of  random variables as follows: 
\begin{itemize}
\item We express a random variable as $\mathcal{M}_{n, N}^{(0)} (\theta)$ if it admits a representation:
$$ 
\sum_{j = 1}^n \sum_{i = 1}^N f (Z_{j-1}^{[i], \theta})
$$
for some $f \in \mathcal{R}$.  
\item We express a random variable as $\mathcal{M}_{n, N}^{(1)} (\theta)$ if it admits a representation:
$$ \sum_{j = 1}^n \sum_{i = 1}^N \sum_{\ell = 1}^d f_\ell (Z_{j-1}^{[i], \theta}) \mathbf{m}_{j-1}^{[i], \ell} (\trueparam) 
$$
for some $f_\ell \in \mathcal{R}$.    
\item We express a random variable as $\mathcal{M}_{n, N}^{(2)} (\theta)$ if it admits a representation:
$$ 
\sum_{j = 1}^n \sum_{i = 1}^N \sum_{\ell_1, \ell_2 = 1}^d 
f_{\ell_1, \ell_2}  (Z_{j-1}^{[i], \theta}) \mathbf{m}_{j-1}^{[i], \ell_1} (\trueparam) \mathbf{m}_{j-1}^{[i], \ell_2} (\trueparam) 
$$ 
for some $f_{\ell_1, \ell_2} \in \mathcal{R}$.     
\end{itemize} 
We study each of the following four cases:
\begin{align*}
\mathrm{I}&.\,(k_1, k_2) \in \mathcal{I}_{\alpha_S} \times \mathcal{I}_{\alpha_S}; \qquad 
\mathrm{II}.\, (k_1, k_2) \in \mathcal{I}_{\alpha_R} \times \mathcal{I}_{\alpha_R}; \\
\mathrm{III}&.\,(k_1, k_2) \in \mathcal{I}_{\beta} \times \mathcal{I}_{\beta};\qquad  
\mathrm{IV}.\,(k_1, k_2) \in \mathcal{I}_{w_1} \times \mathcal{I}_{w_2}, \, w_1, w_2 \in \{\alpha_S, \alpha_R, \beta\}, \, w_1 \neq w_2.  
\end{align*}

\noindent 
\textbf{Case I.} $(k_1, k_2) \in \mathcal{I}_{\alpha_S} \times \mathcal{I}_{\alpha_S}$. We have from (\ref{eq:contrast_decomp}) that: 
\begin{align}
&  \bigl[ \mathscr{H}_{n, N} (\theta) \bigr]_{k_1 k_2}
=  \tfrac{\Delta_n^2}{N} \partial_{\theta, k_1} \partial_{\theta, k_2} \ell_{n, N} (\theta) \nonumber \\ 
& \qquad  = \tfrac{\Delta_n}{N} \sum_{j = 1}^n \sum_{i = 1}^N 
\Bigl\{ 2 \, \partial_{\theta, k_1} V_{S, 0} (\alpha_S, \sample{j-1}{i})^\top  
\Lambda_{SS, j-1}^{[i]} (\theta)
\partial_{\theta, k_2} V_{S, 0} (\alpha_S, \sample{j-1}{i})  
\nonumber 	\\ 
& \qquad \qquad + 2 \, \partial_{\theta, k_2} V_{S, 0} (\alpha_S, \sample{j-1}{i})^\top  
\partial_{\theta, k_1} \Lambda_{SS, j-1}^{[i]} (\theta)
b_{S} (\alpha_S, \sample{j-1}{i})   \nonumber \\ 
& \qquad \qquad + 2 \, \partial_{\theta, k_1} V_{S, 0} (\alpha_S, \sample{j-1}{i})^\top  
\partial_{\theta, k_2} \Lambda_{SS, j-1}^{[i]} (\theta)
b_{S} (\alpha_S, \sample{j-1}{i})      
\nonumber \\ 
& \qquad \qquad  
+ b_S (\alpha_S, \sample{j-1}{i})^\top \partial_{\theta, k_1} \partial_{\theta, k_2} 
\Lambda_{SS, j-1}^{[i]} (\theta) 
b_S (\alpha_S, \sample{j-1}{i}) \Bigr\} \nonumber \\
\nonumber 
& \quad \quad \quad\quad\quad + \tfrac{{\Delta_n^2}}{N} \mathcal{M}_{n, N}^{(0)} (\theta)
+ h (\tfrac{\sqrt{\Delta_n^3}}{N}) \mathcal{M}_{n, N}^{(1)} (\theta) 
+ \tfrac{\Delta_n^2}{N} \mathcal{M}_{n, N}^{(2)} (\theta),  
\end{align} 
where $h$ is a non-decreasing function s.t. $| h (x) | \lesssim x, \, x > 0$. 
The last three terms in the above expression converge to $0$ in probability due to Lemmas \ref{lemma:base_conv} and  \ref{lemma:key_conv}. We then conclude from Lemmas \ref{lemma:base_conv} and  \ref{lemma:key_conv}, the consistency of $\hat{\theta}^{n, N}$ by noticing that the terms involving $b_S$ also converge to $0$ when $\theta = \hat{\theta}^{n, N}$.  \\ 

\noindent 
\textbf{Case II.}  $(k_1, k_2) \in \mathcal{I}_{\alpha_R} \times \mathcal{I}_{\alpha_R}$.  Noticing that $\Lambda (\theta)$, $\Sigma (\theta)$ are independent of $\alpha_R$,  we have: 
\begin{align*}
& \bigl[ \mathscr{H}_{n, N} (\theta) \bigr]_{k_1 k_2}
= 2 \tfrac{\Delta_n}{N} \sum_{j = 1}^n \sum_{i = 1}^N 
\partial_{\theta, k_1} \mathcal{B}_{j-1}^{[i]} (\theta)^\top  
\Lambda_{j-1}^{[i]} (\theta) 
\partial_{\theta, k_2} \mathcal{B}_{j-1}^{[i]} (\theta) \nonumber  \\
& 
+ 2 \tfrac{1}{N} \sum_{j = 1}^n \sum_{i = 1}^N  
\partial_{\theta, k_1} \partial_{\theta, k_2} \mathcal{B}_{j-1}^{[i]} (\theta)^\top  
\bigl( \Lambda_{S, j-1}^{[i]} (\theta) \bigr)^\top b_S (\alpha_S, \sample{j-1}{i}) 
\nonumber \\ 
& 
+ 2 \tfrac{{\Delta_n}}{N} \sum_{j = 1}^n \sum_{i = 1}^N  
\partial_{\theta, k_1} \partial_{\theta, k_2} \mathcal{B}_{j-1}^{[i]} (\theta)^\top  
\Lambda_{j-1}^{[i]} (\theta) \, 
\begin{bmatrix}
\tfrac{1}{2} \mathscr{L}_0 [V_{S, 0} (\alpha_S^\dagger, \cdot) ](Z_{j-1}^{[i], \trueparam}) 
- \tfrac{1}{2} \mathscr{L}_0 
[V_{S, 0} (\alpha_S, \cdot)] (Z_{j-1}^{[i], \theta})      \\[0.2cm] 
b_R (Z_{j-1}^{[i], \theta})
\end{bmatrix}^\top   
\\ &\qquad+ \tfrac{\sqrt{\Delta_n}}{N}\mathcal{M}_{n, N}^{(1)} (\theta),  
\nonumber 
\end{align*}  
where $\mathcal{B}^{[i]}_{j-1} (\theta)$ and $\Lambda^{[i]}_{S, j-1} (\theta)$ are defined in (\ref{eq:B}) and (\ref{eq:L_S}), respectively. Note that: 
\begin{align*}
\partial_{\theta, k_1} \mathcal{B}_{j-1}^{[i]} (\theta)^\top  
& \Lambda_{j-1}^{[i]} (\theta) 
\partial_{\theta, k_2} \mathcal{B}_{j-1}^{[i]} (\theta) \\ &= 
\partial_{\theta, k_1} V_{R, 0} (Z_{j-1}^{[i], \theta})^\top  
\Sigma_{RR}^{-1}  (\beta, \sample{j-1}{i}, \mu_{t_{j-1}}^N)
\partial_{\theta, k_2} V_{R, 0} (Z_{j-1}^{[i], \theta}),   
\end{align*} 
with the equality obtained from a computation similar to a corresponding one used in the proof of Lemma \ref{lemma:key_matrix}. We also notice that the second term is exactly zero since
\begin{align*}
& \partial_{\theta, k_1} \partial_{\theta, k_2} \mathcal{B}_{j-1}^{[i]} (\theta)^\top 
\bigl( \Lambda_{S, j-1}^{[i]}  (\theta) \bigr)^\top \\ 
& \quad 
= \bigl( \partial_{\theta, k_1} \partial_{\theta, k_2} V_{R, 0} ( Z_{j-1}^{[i], \theta}) \bigr)^\top 
\Sigma_{RR}^{-1} (Z_{j-1}^{[i], \theta})
[\Sigma_{RS} (Z_{j-1}^{[i], \theta})
\ \  
\Sigma_{RR} (Z_{j-1}^{[i], \theta}) 
] 
\bigl( \Lambda_{S, j-1}^{[i]}  (\theta) \bigr)^\top 
= \mathbf{0}_{d_R}.  
\end{align*} 
We then conclude from Lemmas \ref{lemma:base_conv}, \ref{lemma:key_conv}, and the consistency of the estimator.  \\

\noindent 
\textbf{Case III.}  $(k_1, k_2) \in \mathcal{I}_{\beta} \times \mathcal{I}_{\beta}$. We have that: 
\begin{align*}
\bigl[ \mathscr{H}_{n, N} (\theta) \bigr]_{k_1 k_2}
= \tfrac{\Delta_n}{N} \sum_{j = 1}^n \sum_{i = 1}^N 
\Big\{ \sum_{1 \le \kappa \le 3} &\mathscr{A}_{j-1}^{[i], (\kappa)} (\theta) \Big\}
+ h_1 \Bigl( \tfrac{\sqrt{\Delta_n^3}}{N} \Bigr) \times  \mathcal{M}^{(0)} (\theta)
\\ &+ h_2 \Bigl( \tfrac{\sqrt{\Delta_n}}{N}  \Bigr) \times \mathcal{M}^{(1)} (\theta),  \nonumber 
\end{align*}  
where $h_i : [0, \infty) \to \mathbb{R}$ such that $|h_i (x)| \lesssim |x|$, $i = 1, 2$ and: 
\begin{align*}
\mathscr{A}_{j-1}^{[i], (1)} (\theta) 
& =  \mathbf{m}_{j-1}^{[i]} (\trueparam)^\top 
\partial_{\theta, k_1} \partial_{\theta, k_2} \Lambda_{j-1}^{[i]} (\theta)
\mathbf{m}_{j-1}^{[i]} (\trueparam) 
+  \partial_{\theta, k_1} \partial_{\theta, k_2}  \log \det \Sigma_{j-1}^{[i]} (\theta); \\[0.3cm] 
\mathscr{A}_{j-1}^{[i], (2)} (\theta) 
& = \sum_{1 \le \ell_1, \ell_2 \le d} f_{\ell_1, \ell_2}^{k_1, k_2} (Z_{j-1}^{[i], \theta}) 
\bigl(  \mathbf{m}_{j-1}^{[i], \ell_1} (\theta)  -   \mathbf{m}_{j-1}^{[i], \ell_1} (\trueparam) \bigr)
\bigl(  \mathbf{m}_{j-1}^{[i], \ell_2} (\theta)  -   \mathbf{m}_{j-1}^{[i], \ell_2} (\trueparam) \bigr); \\[0.1cm] 
\mathscr{A}_{j-1}^{[i], (3)} (\theta) 
& = \sqrt{\Delta_n} \sum_{1 \le \ell  \le d} f_{\ell}^{k_1, k_2} (Z_{j-1}^{[i], \theta})  
\bigl(  \mathbf{m}_{j-1}^{[i], \ell} (\theta)  -   \mathbf{m}_{j-1}^{[i], \ell} (\trueparam) \bigr), 
\end{align*} 
for some $f_{\ell_1, \ell_2}^{k_1, k_2}, \, f_{\ell}^{k_1, k_2} \in \mathcal{R}$.  From Lemmas \ref{lemma:base_conv}, \ref{lemma:key_conv} we have that: 
\begin{align*}
& \tfrac{\Delta_n}{N} \sum_{j = 1}^n \sum_{i = 1}^N \mathscr{A}_{j-1}^{[i], (1)} (\theta)  \probconv  \\
& 
\int_0^T \mathbb{E}_{\mu_t} \Bigl[ 
\mathrm{tr}  \bigl\{\partial_{\theta, k_1} \partial_{\theta, k_2} \Lambda (\theta, W, \mu_t) \cdot \Sigma (\trueparam, W, \mu_t) \bigr\} +  
\partial_{\theta, k_1} \partial_{\theta, k_2} \log \det \Sigma   (\theta, W, \mu_t) \Bigr] dt; \\[0.4cm]
& h_1 \Bigl( \tfrac{\sqrt{\Delta_n^3}}{N} \Bigr) \times  \mathcal{M}^{(0)} (\theta) \probconv 0,   \qquad 
h_2 \Bigl( \tfrac{\sqrt{\Delta_n}}{N}  \Bigr) \times \mathcal{M}^{(1)} (\theta) \probconv 0,   
\end{align*}
uniformly in $\theta \in \Theta$. In particular, from the consistency of $\hat{\theta}^{n ,N}$ we get that for any $\lambda \in [0,1]$:
\begin{align*}
& \tfrac{\Delta_n}{N} \sum_{j = 1}^n \sum_{i = 1}^N \mathscr{A}_{j-1}^{[i], (1)} \bigl( \trueparam + \lambda (\hat{\theta}^{n, N} - \trueparam) \bigr) \\
& \qquad \probconv 
\int_0^T \mathbb{E}_{\mu_t} \Bigl[  
\bigl\{ 
\mathrm{tr}  \bigl( \partial_{\theta, k_1} \Sigma \cdot \Lambda \cdot  \partial_{\theta, k_2} \Sigma  \cdot \Lambda \bigr) 
\bigr\} (\trueparam, W, \mu_t) \Bigr] dt \equiv 2 \times  [\Gamma_\beta^{(\mathrm{H})} (\theta) ]_{k_1 k_2}, 
\end{align*}  
where we have used that for $Z^\dagger = (\trueparam, W, \mu_t)$, 
\begin{align*}
\partial_{\theta, k_1} \partial_{\theta, k_2} \Lambda (Z^\dagger)  & = 
2 \, \bigl( \Lambda \cdot \partial_{\theta, k_1} \Sigma  \cdot \Lambda 
\cdot \partial_{\theta, k_2} \Sigma  \cdot \Lambda \bigr) (Z^\dagger) 
-  
\bigl( \Lambda \cdot \partial_{\theta, k_1} \partial_{\theta, k_2} \Sigma \cdot \Lambda \bigr) (Z^\dagger); \\    
\partial_{\theta, k_1} \partial_{\theta, k_2} \log \det \Sigma  (Z^\dagger) 
& = - \bigl\{ \mathrm{tr} \bigl( \Lambda \cdot \partial_{\theta, k_1} \Sigma  \cdot \Lambda \cdot \partial_{\theta, k_2} \Sigma \bigr) \bigr\} (Z^\dagger) 
+ \mathrm{tr} \bigl( \Lambda \cdot \partial_{\theta, k_1} \partial_{\theta, k_2} \Sigma \bigr) (Z^\dagger). 
\end{align*} 
Following a similar argument in the proof of Lemma \ref{lemma:step-III} in Section \ref{sec:pf_consistency} together with the rate of convergence $\hat{\alpha}_S^{n ,N} - \alpha_S^\dagger = o_{\mathbb{P}_\trueparam} (\sqrt{\Delta_n})$ and Lemma \ref{lemma:base_conv}, we also get that for any $\lambda \in [0, 1]$, 
\begin{align*}
\tfrac{\Delta_n}{N} \sum_{j = 1}^n \sum_{i = 1}^N \bigl\{ \mathscr{A}_{j-1}^{[i], (\kappa)} \bigl( \trueparam + \lambda (\hat{\theta}^{n, N} - \trueparam) \bigr) \bigr\}  \probconv 0, \qquad  \kappa = 2, 3.  
\end{align*}
The proof for Case III is now complete. 
\\ 

\noindent 
\textbf{Case IV.} $(k_1, k_2) \in \mathcal{I}_{w_1} \times \mathcal{I}_{w_2}, \, w_1, w_2 \in \{\alpha_S, \alpha_R, \beta\}, \, w_1 \neq w_2$. Following similar computations as in the above three cases -- in particular the matrix operation in the proof of Lemma \ref{lemma:key_matrix} and the rate of convergence for $\hat{\alpha}_{S}^{n, N}$ -- we get that $[\mathscr{H}_{n, N} (\theta)]_{k_1 k_2} \probconv 0$. We then conclude  Proposition \ref{prop:lln}. 

\section{Proof of Proposition \ref{prop:clt}} \label{sec:pf_prop_clt}
%
In what follows, we often omit explicit use of 
$\trueparam$ to simplify the presentation, e.g.~we write $\mathbf{m}_{j-1}^{[i]} (\trueparam) \equiv \mathbf{m}_{j-1}^{[i]}, \ Z_{j-1}^{[i], \trueparam} \equiv Z_{j-1}^{[i]}$. We also write $\mathscr{L}_0 V_{S, 0} (Z_{j-1}^{[i]}) \equiv \mathscr{L}_0 [V_{S, 0}(\alpha_S^\dagger, \cdot)] (Z_{j-1}^{[i]})$. We then introduce, for $1 \le j \le n, \, 1 \le i \le N$:  
\begin{align}
\widetilde{\ell}_{j-1}^{\, [i]} (\theta) 
:= 
\bigl\{
\mathbf{m}_{j-1}^{[i]} (\theta)^\top \Lambda_{j-1}^{[i]} (\theta) \mathbf{m}_{j-1}^{[i]} (\theta)  
+ \log \det \Sigma_{j-1}^{[i]} (\theta) 
\bigr\}.  \label{eq:tilde_ell}
\end{align} 
Using (\ref{eq:tilde_ell}),  term $\mathscr{G}_{n, N} (\trueparam)$ defined in (\ref{eq:G}) is expressed as $\textstyle \mathscr{G}_{n, N} (\trueparam) = \sum_{i = 1}^N  \sum_{j = 1}^n  \widetilde{\mathscr{G}}_{j-1}^{\, [i]}$, where 
for $m_1 \in \mathcal{I}_{\alpha_S}$, $m_2 \in \mathcal{I}_{\alpha_R}$, $m_3 \in \mathcal{I}_{\beta}$: 
%
%
\begin{align}
\label{eq:g_alpha_S}
\widetilde{\mathscr{G}}_{j-1}^{\, [i], m_1} 
& = - 2 \sqrt{\tfrac{\Delta_n}{N}} 
\partial_{\theta, m_1} V_{S, 0} (\alpha_S^\dagger, \sample{j-1}{i})^\top 
\Lambda_{S, j-1}^{[i]}  \mathbf{m}_{j-1}^{[i]}  
\\ &\qquad -  \sqrt{\tfrac{\Delta_n^3}{N}}  
\partial_{\theta, m_1} \mathscr{L}_0  V_{S, 0} (Z_{j-1}^{[i]})^\top \Lambda_{S, j-1}^{[i]} 
\mathbf{m}_{j-1}^{[i]} \nonumber \\ 
& \qquad \nonumber 
+ \sqrt{\tfrac{\Delta_n^2}{N}} \times 
\bigl\{  
\bigl(\mathbf{m}_{j-1}^{[i]} \bigr)^\top 
\partial_{\theta, m_1} \Lambda_{j-1}^{[i]} 
\mathbf{m}_{j-1}^{[i]} 
+ \partial_{\theta, m_1} \log \det \Sigma_{j-1}^{[i]}
\bigr\};   
\\[0.2cm] 
\nonumber 
\widetilde{\mathscr{G}}_{j-1}^{\, [i], m_2}  
&= - 2 \sqrt{\tfrac{\Delta_n}{N}}  
\begin{bmatrix}
\tfrac{1}{2} \partial_{\theta, m_2} \mathscr{L}_0 V_{S, 0} (Z_{j-1}^{[i]})  \\[0.1cm] 
\partial_{\theta, m_2} V_{R, 0} (Z_{j-1}^{[i]}) \\ 
\end{bmatrix}^\top  
\Lambda_{j-1}^{[i]} 
\mathbf{m}_{j-1}^{[i]} ;  
\\[0.2cm]
\label{eq:g_beta}
\widetilde{\mathscr{G}}_{j-1}^{\, [i], m_3} 
& =  -  \sqrt{\tfrac{\Delta_n^2}{N}}  
\partial_{\theta, m_3} \mathscr{L}_0  V_{S, 0} (Z_{j-1}^{[i]})^\top 
\Lambda_{S, j-1}^{[i]} 
\mathbf{m}_{j-1}^{[i]}  \\
& \quad  \quad 
+ \sqrt{\tfrac{\Delta_n}{N}} \times 
\bigl\{  
\bigl(\mathbf{m}_{j-1}^{[i]}\bigr)^\top 
\partial_{\theta, m_3} \Lambda_{j-1}^{[i]}
\mathbf{m}_{j-1}^{[i]} 
+ \partial_{\theta, m_3} \log \det \Sigma_{j-1}^{[i]} 
\bigr\}.  
\nonumber 
\end{align} 
Following the arguments in \cite{amo:23, amo:24}, we show the CLT of $\mathscr{G}_{n, N} (\theta)$ via the \emph{martingale limit theorem}. In particular, we show the following limits. For $1 \le k_1, k_2 \le d_\theta$:
\begin{align}
\mathscr{J}_{n, N}^{k_1} & \equiv \sum_{j = 1}^n \sum_{i = 1}^N \mathbb{E}_{\trueparam} [ \widetilde{\mathscr{G}}_{j-1}^{\,[i], k_1} | \mathcal{F}_{t_{j-1}}^N] \probconv 0;   \label{eq:score}\\ 
\mathscr{J}_{n, N}^{k_1, k_2}  & \equiv \sum_{j = 1}^n
\sum_{i_1, i_2 = 1}^N  
\mathbb{E}_{\trueparam} [ 
\widetilde{\mathscr{G}}_{j-1}^{\, [i_1], k_1}  
\widetilde{\mathscr{G}}_{ j-1}^{\, [i_2], k_2} | \mathcal{F}_{t_{j-1}}^N] \probconv 4 \bigl[ \Gamma (\trueparam) \bigr]_{k_1 k_2};   \label{eq:second_m}\\ 
\widetilde{\mathscr{J}}_{n, N}^{\, k_1} & \equiv 
\sum_{j = 1}^n \sum_{i_1, i_2, i_3, i_4 = 1}^N 
\mathbb{E}_{\trueparam} \Bigl[ \prod_{\lambda = 1}^4
\widetilde{\mathscr{G}}_{j-1}^{\, [i_\lambda], k_1} | \mathcal{F}_{t_{j-1}}^N \Bigr] \probconv 0,   \label{eq:fourth_m} 
\end{align}
as $ n, N  \to \infty$ with $N \Delta_n \to 0$. 

\subsection{Proof of (\ref{eq:score})} \label{sec:pf_score}
\textbf{Case I.} \, $k_1 \in \mathcal{I}_{\alpha_S}. $  This case corresponds to the score w.r.t.~$\alpha_S$.  We have that $ \mathscr{J}_{n, N}^{k_1} =  \mathscr{J}_{n, N}^{k_1, (1)}   + 
\mathscr{J}_{n, N}^{k_1, (2)}$ with:  
\begin{align*}
\mathscr{J}_{n, N}^{k_1, (1)} 
& :=  
- 2 \sqrt{\tfrac{\Delta_n}{N}}  \sum_{i = 1}^N \sum_{j = 1}^n 
\mathbb{E} \left[   
\partial_{\theta, k_1} V_{S, 0} (\alpha_S^\dagger, \sample{j-1}{i})^\top 
\Lambda_{S, j-1}^{[i]} \,  
\mathbf{m}_{j-1}^{[i]} 
\,\big|\, \mathcal{F}_{t_{j-1}}^N \right]   \\ 
& \qquad  \qquad
-  \sqrt{\tfrac{\Delta_n^3}{N}}  \sum_{i = 1}^N \sum_{j = 1}^n 
\mathbb{E} \left[   
\partial_{\theta, k_1} \mathscr{L}_0  V_{S, 0} (Z_{j-1}^{[i]})^\top 
\Lambda_{S, j-1}^{[i]}   \, 
\mathbf{m}_{j-1}^{[i]} 
\,\big|\, \mathcal{F}_{t_{j-1}}^N \right]; \\[0.2cm]  
\mathscr{J}_{n, N}^{k_1, (2)} 
& :=  
\sqrt{\tfrac{\Delta_n^2}{N}}  \sum_{i = 1}^N \sum_{j = 1}^n 
\mathbb{E} \left[   
\bigl( \mathbf{m}_{j-1}^{[i]} \bigr)^\top 
\bigl( \partial_{\theta, k_1} \Lambda_{j-1}^{[i]} \bigr) 
\mathbf{m}_{j-1}^{[i]} 
+ \partial_{\theta, k_1} \log \det \Sigma_{j-1}^{[i]} 
\,\big|\, \mathcal{F}_{t_{j-1}}^N \right].  
\end{align*}
From Lemma \ref{lemma:bds_aux} and (\ref{eq:first_moment}) in Lemma \ref{lemma:moments} we obtain  that 
$ \textstyle 
\mathbb{E}_{\trueparam} \bigl| \mathscr{J}_{n, N}^{k_1, (1)}  \bigr| 
\le  C \sqrt{\tfrac{\Delta_n}{N}}  \sum_{i = 1}^N \sum_{j = 1}^n \Delta_n^{3/2} = C \times T \sqrt{N \Delta_n^2}, 
$ 
for some constant $C> 0$ depending on $T > 0$ but not on $n, N$. We also have from Lemma \ref{lemma:bds_aux} and (\ref{eq:second_moment}) in Lemma \ref{lemma:moments} that:  
\begin{align*}
\mathscr{J}_{n, N}^{k_1, (2)} 
& = \sqrt{\tfrac{\Delta_n^2}{N}}  \sum_{i = 1}^N \sum_{j = 1}^n 
\bigl\{ 
\mathrm{tr} \bigl[ \bigl( \partial_{\theta, k_1} \Lambda_{j-1}^{[i]} \bigr)   \Sigma_{j-1}^{[i]} \bigr]
+ \partial_{\theta, k_1} \log \det \Sigma_{j-1}^{[i]} 
\bigr\} 
\\ &\qquad + \sqrt{\tfrac{\Delta_n^2}{N}}  \sum_{i = 1}^N \sum_{j = 1}^n e_{j-1}^{[i, i]} (\Delta_n) \\[0.2cm] 
& =  \sqrt{\tfrac{\Delta_n^2}{N}}  \sum_{i = 1}^N \sum_{j = 1}^n e_{j-1}^{[i, i]} (\Delta_n), 
\end{align*}   
where $e_{j-1}^{[i, i]} \in \mathcal{S}_{t_{j-1}}$, and in the last line we have used the fact that  
$ 
\partial_{\theta, k_1} \log \det \Sigma_{j-1}^{[i]} 
= \mathrm{tr} \bigl[  \Lambda_{j-1}^{[i]}  \partial_{\theta, k_1} \Sigma_{j-1}^{[i]} \bigr]  
= -  \mathrm{tr} \bigl[  \partial_{\theta, k_1}  \Lambda_{j-1}^{[i]} \Sigma_{j-1}^{[i]} \bigr].    
$ 
Thus, there exists a constant $C = C(T) > 0$ such that 
$
\mathbb{E}_{\trueparam} \bigl|  \mathscr{J}_{n, N}^{k_1, (2)}  \bigr|   
\le C \sqrt{N \Delta_n^2}. 
$ We conclude {Case I}.
\\ 

\noindent 
\textbf{Case II.}  $k_1 \in \mathcal{I}_{\alpha_R}. $
This case corresponds to the score w.r.t.~$\alpha_R$. We have that: 
\begin{align*}
\mathscr{J}_{n, N}^{k_1} 
\equiv  
- 2 \sqrt{\tfrac{\Delta_n}{N}}  \sum_{i = 1}^N \sum_{j = 1}^n 
\mathbb{E}_{\trueparam} \left[   
\begin{bmatrix}
\tfrac{1}{2} \partial_{\theta, k_1} \mathscr{L}_0 V_{S, 0} (Z_{j-1}^{[i]})  \\[0.1cm] 
\partial_{\theta, k_1} V_{R, 0} (Z_{j-1}^{[i]}) \\ 
\end{bmatrix}^\top  
\Lambda_{j-1}^{[i]} \, 
\mathbf{m}_{j-1}^{[i]} 
\, \Bigl|  \, \mathcal{F}_{t_{j-1}}^N \right].  
\end{align*}  
An argument similar to the one used in 
Case I.~yields that 
$
\mathbb{E}_{\trueparam} \bigl| \mathscr{J}_{n, N}^{k_1} \bigr| 
\le  C  \times \sqrt{N \Delta_n^2}, \, d_{\alpha_S} + 1 \le k_1 \le d_\alpha, 
$
for some constant $C =  C (T) > 0$. We conclude {Case II}. \\ 

\noindent 
\textbf{Case III. } $k_1 \in \mathcal{I}_{\beta}. $ This case corresponds to the score w.r.t.~$\beta$.  We have that $ \mathscr{J}_{n, N}^{k_1}  =  \mathscr{J}_{n, N}^{k_1, (1)} + 
\mathscr{J}_{n, N}^{k_1, (2)}$ with:  
\begin{align*}
\mathscr{J}_{n, N}^{k_1, (1)} 
& :=  -  \sqrt{\tfrac{\Delta_n^2}{N}}  \sum_{i = 1}^N \sum_{j = 1}^n 
\mathbb{E}_{\trueparam} \left[   
\partial_{\theta, k_1} 
\mathscr{L}_0 V_{S, 0} (Z_{j-1}^{[i]})^\top 
\Lambda_{S, j-1}^{[i]}  \, 
\mathbf{m}_{j-1}^{[i]} 
\,\big|\, \mathcal{F}_{t_{j-1}}^N \right]; \\  
\mathscr{J}_{n, N}^{k_1, (2)} 
& := 
\sqrt{\tfrac{\Delta_n}{N}}  \sum_{i = 1}^N \sum_{j = 1}^n 
\mathbb{E}_{\trueparam} \left[   
\bigl( \mathbf{m}_{j-1}^{[i]} \bigr)^\top 
\partial_{\theta, k_1} \Lambda_{j-1}^{[i]}
\mathbf{m}_{j-1}^{[i]} 
+ \partial_{\theta, k_1} \log \det \Sigma_{j-1}^{[i]} 
\,\big|\, \mathcal{F}_{t_{j-1}}^N \right].  
\end{align*} 
A analysis similar to the one for {Case I.}~yields the bounds
$ \mathbb{E}_{\trueparam} \bigl|  \mathscr{J}_{n, N}^{k_1, (1)}  \bigr|  
\le  C_ 1 \sqrt{N \Delta_n^3}$,   
$\mathbb{E}_{\trueparam} \bigl|  \mathscr{J}_{n, N}^{k_1, (2)}  \bigr|  
\le  C_2 \sqrt{N \Delta_n}
$
for some constants $C_1 = C_1 (T) > 0$ and $C_2 = C_2 (T) > 0$.  The proof of (\ref{eq:score}) is now complete.  
\subsection{Proof of (\ref{eq:second_m})} 
We analyse the following cases separately: 
\begin{align*}
\mathrm{I}&. (k_1, k_2) \in \mathcal{I}_{\alpha_R} \times \mathcal{I}_{\alpha_R}; \quad  
\mathrm{II}. (k_1, k_2) \in \mathcal{I}_{\beta} \times \mathcal{I}_{\beta};  \\
\mathrm{III}&. (k_1, k_2) \in \mathcal{I}_{\alpha_S} \times \mathcal{I}_{\alpha_S}; \quad   
\mathrm{IV}. (k_1, k_2) \in \mathcal{I}_{w_1} \times \mathcal{I}_{w_2}, \quad  w_1, w_2 \in \{\alpha_S, \alpha_R, \beta\}, \, w_1 \neq w_2. 
\end{align*}
\noindent
\textbf{Case I.}  $(k_1, k_2) \in \mathcal{I}_{\alpha_R} \times \mathcal{I}_{\alpha_R}$.  We introduce for $k \in \mathcal{I}_{\alpha_R}$,  $1 \le i \le N$ and $1 \le j \le n$:  
\begin{align} \label{eq:B_R}
\mathcal{B}_{j-1}^{[i], k}= 
\begin{bmatrix}
\tfrac{1}{2} \partial_{x_R}^\top V_{S, 0}  (\alpha_S^\dagger, X_{t_{j-1}}^{[i]})  \, 
\partial_{\theta, k} V_{R, 0} (\alpha_R^\dagger, \sample{j-1}{i}, \mu_{t_{j-1}}^N ) \\[0.2cm]
\partial_{\theta, k} V_{R, 0} (\alpha_R^\dagger, \sample{j-1}{i}, \mu_{t_{j-1}}^N ) 
\end{bmatrix}.   
\end{align}  
We have that: 
\begin{align*}
\mathscr{J}_{n, N}^{k_1,  k_2}  
& = 4 \tfrac{\Delta_n}{N}  \sum_{j = 1}^n \sum_{i_1, i_2 = 1}^N 
\mathbb{E}_{\trueparam} \bigl[ \prod_{q = 1, 2}
\bigl\{ 
(\mathcal{B}_{j-1}^{[i_q], k_q})^\top 
\Lambda_{j-1}^{[i_q]} \mathbf{m}_{j-1}^{[i_q]} \bigr\} 
\,\big|\,  \mathcal{F}_{t_{j-1}}^N
\bigr] \\ 
& = 4 \tfrac{\Delta_n}{N}  \sum_{j = 1}^n \sum_{i = 1}^N 
(\mathcal{B}_{j-1}^{[i], k_1})^\top  \Lambda_{j-1}^{[i]} \,  \mathcal{B}_{j-1}^{[i], k_2}  
+ \tfrac{\Delta_n}{N} \sum_{j = 1}^n \sum_{i_1, i_2 = 1}^N e_{j-1}^{[i_1, i_2]} (\Delta_n) \qquad  \\ 
& = 4 \tfrac{\Delta_n}{N} \sum_{j = 1}^n \sum_{i = 1}^N \partial_{\theta, k_1} V_{R, 0} (Z_{j-1}^{[i]})^\top \Sigma_{RR}^{-1} (Z_{j-1}^{[i]})  
\partial_{\theta, k_2} V_{R, 0} (Z_{j-1}^{[i]}) 
\\ &\qquad \qquad + \tfrac{\Delta_n}{N} \sum_{j = 1}^n \sum_{i_1, i_2 = 1}^N {e}_{j-1}^{[i_1, i_2]} (\Delta_n),   
\end{align*} 
for some $e_{j-1}^{[i_1, i_2]} \in \mathcal{S}_{t_{j-1}}$, where in the second line, we have used equation \eqref{eq:second_moment} from Lemma \ref{lemma:moments} and in the third line, we have used the equality:
\begin{align*}
\Lambda_{j-1}^{[i]} \mathcal{B}_{j-1}^{[i], k_2} = 
\begin{bmatrix}
\mathbf{0}_{d_S}^\top  &  
\Bigl( \Sigma_{RR}^{-1} (Z_{j-1}^{[i]}) \partial_{\theta, k_2} V_{R, 0} (Z_{j-1}^{[i]}) \Bigr)^\top 
\end{bmatrix}^\top   
\end{align*} 
which follows from a computation similar to the ones  provided in the proof of Lemma \ref{lemma:key_matrix}.
We thus conclude:
\begin{align*}
\mathscr{J}_{n, N}^{k_1,  k_2}   \probconv 4 \int_0^T \mathbb{E}_{\mu_t} \bigl[
\bigl( \partial_{\theta, k_1} V_{R , 0}^\top \, 
\Sigma_{RR}^{-1} \, 
\partial_{\theta, k_2} V_{R , 0} \bigr) (\trueparam, W, \mu_t)  \bigr]  dt 
\end{align*} 
as $n, N \to \infty$ with $N \Delta_n \to 0$. 
\\ 

\noindent 
\textbf{Case II.} $(k_1, k_2) \in \mathcal{I}_{\beta} \times \mathcal{I}_{\beta}$. We have from Lemma \ref{lemma:moments} that  $\textstyle \mathscr{J}_{n, N}^{k_1,  k_2} = \sum_{\iota = 1, 2, 3} G_{k_1,  k_2}^{(\iota)}$  with: 
\begin{align*}
G_{k_1,  k_2}^{(1)}  
& = \tfrac{\Delta_n}{N} \sum_{j = 1}^{n} \sum_{i_1, i_2 = 1}^N \widetilde{G}_{j-1}^{[i_1, i_2]};  \nonumber \\ 
G_{k_1,  k_2}^{(2)} 
& = \tfrac{\Delta_n}{N} \sum_{j = 1}^n \sum_{i_1, i_2 = 1}^N  e_{j-1}^{(1), [i_1, i_2]} (\Delta_n)
+ \tfrac{\Delta_n^{3/2}}{N} \sum_{j = 1}^n \sum_{i_1, i_2 = 1}^N  e_{j-1}^{(2), [i_1, i_2]} (\Delta_n^{3/2});  \\ 
G_{k_1,  k_2}^{(3)} 
& = \tfrac{\Delta_n^{3/2}}{N} \sum_{j = 1}^n \sum_{i_1, i_2 = 1}^N  \sum_{\alpha \in \{1, \ldots, d\}^3 } \! \! \! \! e_{j-1}^{[i_1, i_2], \alpha} (1) \cdot \mathbb{E}_{\trueparam} \bigl[ \mathbf{m}_{j-1}^{[i_1], \alpha_1} \mathbf{m}_{j-1}^{[i_2], \alpha_2} \mathbf{m}_{j-1}^{[i_2], \alpha_3}    | \mathcal{F}_{t_{j-1}}^N \bigr] 
\end{align*}  
for $e_{j-1}^{(1), [i_1, i_2]}, e_{j-1}^{(2), [i_1, i_2]}, e_{j-1}^{[i_1, i_2], \alpha} \in \mathcal{S}_{t_{j-1}}$, where we have set: 
\begin{align*}
\widetilde{G}_{j-1}^{[i_1, i_2]} 
:= \mathbb{E}_{\trueparam} \Bigl[  
\prod_{q = 1, 2} 
\bigl\{ 
(\mathbf{m}_{j-1}^{[i_q]})^\top \partial_{\theta, k_q} \Lambda_{j-1}^{[i_q]} \mathbf{m}_{j-1}^{[i_q]}  
+ \partial_{\theta, k_q} \log \det \Sigma_{j-1}^{[i_q]}
\bigr\} 
\,\big|\, \mathcal{F}_{t_{j-1}}^N\Bigr]. 
\end{align*}
We immediately see that $G_{k_1, k_2}^{(2)}$ converges to $0$ as $N \Delta_n \to 0$ in $L_1$. We then study the first term. We apply (\ref{eq:second_moment}) and (\ref{eq:fourth_moment}) in Lemma \ref{lemma:moments} to get that:
\begin{align*}
& \widetilde{G}_{j-1}^{[i_1, i_2]} 
= \mathrm{tr} \bigl(   \partial_{\theta, k_1} \Lambda_{j-1}^{[i_1]} \, \Sigma_{j-1}^{[i_1]} \bigr) \mathrm{tr} \bigl(   \partial_{\theta, k_2} \Lambda_{j-1}^{[i_2]} \, \Sigma_{j-1}^{[i_2]} \bigr)  \nonumber \\
& \quad 
+ 
\sum_{1 \le \ell_1, \ell_2, \ell_3, \ell_4 \le d}
\bigl[  \partial_{\theta, k_1} \Lambda_{j-1}^{[i_1]}  \bigr]_{\ell_1 \ell_2}
\bigl[ \partial_{\theta, k_1} \Lambda_{j-1}^{[i_2]}  \bigr]_{\ell_3 \ell_4} 
\Big\{  
\bigl[ \Sigma_{j-1}^{[i_1]}  \bigr]_{\ell_1 \ell_3}  \bigl[ \Sigma_{j-1}^{[i_2]}  \bigr]_{\ell_2 \ell_4} 
\\ &\qquad\qquad\qquad\qquad\qquad\qquad\qquad\qquad\qquad\qquad\quad\quad + \bigl[ \Sigma_{j-1}^{[i_1]}  \bigr]_{\ell_1 \ell_4}  \bigl[ \Sigma_{j-1}^{[i_2]}  \bigr]_{\ell_2 \ell_3}   \Big\} 
\times \mathbf{1}_{i_1 = i_2} \\ 
& \quad 
+ \partial_{\theta, k_2} \log  \det \Sigma_{j-1}^{[i_2]}  \times 
\mathrm{tr} \bigl( 
\partial_{\theta, k_1} \Lambda_{j-1}^{[i_1]} \Sigma_{j-1}^{[i_1]} \bigr) 
+ \partial_{\theta, k_1} \log  \det \Sigma_{j-1}^{[i_1]}  \times 
\mathrm{tr} \bigl( 
\partial_{\theta, k_2} \Lambda_{j-1}^{[i_2]} \Sigma_{j-1}^{[i_2]} \bigr)  \nonumber \\   
& \quad + \partial_{\theta, k_1} \log  \det \Sigma_{j-1}^{[i_1]}  \times  
\partial_{\theta, k_2} \log  \det \Sigma_{j-1}^{[i_2]} 
+ e_{j-1}^{[i_1, i_2]} (\Delta_n), 
\end{align*}
where $e_{j-1}^{[i_1, i_2]} \in \mathcal{S}_{t_{j-1}}$.  Noticing that 
$ 
\partial_{\theta, k_q} \log \det \Sigma_{j-1}^{[i_q]} = - \mathrm{tr} \bigl( \partial_{\theta, k_q} \Lambda_{j-1}^{[i_q]}  \, \Sigma_{j-1}^{[i_q]} \bigr)$, $q = 1, 2
$, 
we obtain: 
\begin{align*}
{G}_{k_1, k_2}^{(1)}
& = 2 \tfrac{\Delta_n}{N} \sum_{j  = 1}^n \sum_{i = 1}^N 
\mathrm{tr} \bigl( \partial_{\theta, k_1} \Lambda_{j- 1}^{[i]} \,  \Sigma_{j-1}^{[i]} 
\,  \partial_{\theta, k_2} \Lambda_{j- 1}^{[i]} \,  \Sigma_{j-1}^{[i]} 
\, \bigr) + \tfrac{\Delta_n}{N} \sum_{j = 1}^n \sum_{i_1, i_2 =1}^N  e_{j-1}^{[i_1, i_2]} (\Delta_n) \nonumber  \\
& = \nonumber  2 \tfrac{\Delta_n}{N} \sum_{j  = 1}^n \sum_{i = 1}^N 
\mathrm{tr} \bigl( \partial_{\theta, k_1} \Sigma_{j- 1}^{[i]} \,  \Lambda_{j-1}^{[i]} 
\,  \partial_{\theta, k_2} \Sigma_{j- 1}^{[i]} \,  \Lambda_{j-1}^{[i]} 
\, \bigr)  + \tfrac{\Delta_n}{N} \sum_{j = 1}^n \sum_{i_1, i_2 =1}^N  e_{j-1}^{[i_1, i_2]} (\Delta_n),
\end{align*}
where in the last line we have used $\partial_{\theta, k_q} \Lambda_{j- 1}^{[i]} = - \Lambda_{j-1}^{[i]} \,  \partial_{\theta, k_q} \Sigma_{j-1}^{[i]}  \, \Lambda_{j-1}^{[i]}$, $q = 1, 2$, and:
$$
\mathrm{tr} \bigl( \Lambda_{j-1}^{[i]} 
\, \partial_{\theta, k_1} \Sigma_{j- 1}^{[i]} \,  \Lambda_{j-1}^{[i]} 
\,  \partial_{\theta, k_2} \Sigma_{j- 1}^{[i]} \,  \bigr)   = \mathrm{tr} \bigl( \partial_{\theta, k_1} \Sigma_{j- 1}^{[i]} \,  \Lambda_{j-1}^{[i]} 
\,  \partial_{\theta, k_2} \Sigma_{j- 1}^{[i]} \,  \Lambda_{j-1}^{[i]} 
\, \bigr).  
$$
We thus obtain from Lemma \ref{lemma:base_conv} that: as $n, N \to \infty$ with $N \Delta_n \to 0$, 
\begin{align*}
G_{k_1, k_2}^{(1)} \probconv 2 \int_0^T 
\mathbb{E}_{\mu_t} \bigl[ \bigl\{ \mathrm{tr} \bigl( 
\partial_{\theta, k_1} \Sigma 
\, \Lambda \, 
\partial_{\theta, k_2} \Sigma  
\, \Lambda \bigr) \bigr\} 
(\trueparam, W, \mu_t) \bigr] dt.  
\end{align*} 
We now analyze the third term $G_{k_1, k_2}^{(3)}$. Using the decomposition of $\mathbf{m}_{j-1}^{[i]}$ given in (\ref{eq:decomp_m}), we get that: for $\alpha \in \{1, \ldots, d\}^3$, 
\begin{align}
\begin{aligned}
& \mathbb{E}_{\trueparam} \bigl[ \mathbf{m}_{j-1}^{[i_1], \alpha_1} \mathbf{m}_{j-1}^{[i_2], \alpha_2} \mathbf{m}_{j-1}^{[i_2], \alpha_3}| \mathcal{F}_{t_{j-1}}^N \bigr] \\ 
& \quad 
= \mathbb{E}_{\trueparam} \bigl[ C_{j-1}^{[i_1], \alpha_1} C_{j-1}^{[i_2], \alpha_2} C_{j-1}^{[i_2], \alpha_3}| \mathcal{F}_{t_{j-1}}^N \bigr]  
+ \mathbb{E}_{\trueparam} \bigl[ F_{j-1}^{[i_1, i_2], \alpha} | \mathcal{F}_{t_{j-1}}^N \bigr], 
\end{aligned} \label{eq:third_order_m}
\end{align}
where $F_{j-1}^{[i_1, i_2], \alpha}$ is a product of random variables specified via (\ref{eq:decomp_m}) and involves at least one $D_{j-1}^{[i]}$. Due to the existence of the term $D_{j-1}^{[i]}$, we have that $\mathbb{E}_{\trueparam} \bigl[ F_{j-1}^{[i_1, i_2], \alpha} | \mathcal{F}_{t_{j-1}}^N \bigr] = e_{j-1} (\Delta_n^{1/2})$ for $e_{j-1} \in \mathcal{S}_{t_{j-1}}$. We also observe that the first term on the right-hand side of (\ref{eq:third_order_m}) becomes $0$ when taking the expectation of an odd number of products of Gaussian variables. We then have that:
\begin{align} \label{eq:third_order_m_size}
\mathbb{E}_{\trueparam} \bigl[ \mathbf{m}_{j-1}^{[i_1], \alpha_1} \mathbf{m}_{j-1}^{[i_2], \alpha_2} \mathbf{m}_{j-1}^{[i_2], \alpha_3}| \mathcal{F}_{t_{j-1}}^N \bigr] 
= e_{j-1} (\Delta_n^{1/2}), \quad e_{j-1} \in \mathcal{S}_{t_{j-1}},
\end{align} 
and that:  $G_{k_1, k_2}^{(3)} \probconv 0$ as $N \Delta_n \to 0$.  We thus conclude Case II.  
%
\\ 

\noindent 
\textbf{Case III.} $(k_1, k_2) \in \mathcal{I}_{\alpha_S} \times \mathcal{I}_{\alpha_S}$ . Making use of Lemma \ref{lemma:moments} and the estimate (\ref{eq:third_order_m_size}), we get $\textstyle \mathscr{J}_{n, N}^{k_1,  k_2} = \sum_{\iota = 1, 2} H_{k_1, k_2}^{(\iota)}$  with:    
\begin{align}
H_{k_1, k_2}^{(1)}  
& =  4 \tfrac{\Delta_n}{N} \sum_{j = 1}^n \sum_{i_1, i_2 = 1}^N  
\mathbb{E}_{\trueparam} 
\Bigl[ \prod_{q = 1, 2} 
\bigl\{ 
\partial_{\theta, k_q} V_{S, 0} (\alpha_S^\dagger, \sample{j-1}{i_q})^\top  \Lambda_{S, j-1}^{[i_q]}  \mathbf{m}_{j-1}^{[i_q]} 
\bigr\} 
\,\big|\,   \mathcal{F}_{t_{j-1}}^N 
\Bigr];  \nonumber  \\
H_{k_1, k_2}^{(2)}  
& =   \tfrac{1}{N} \sum_{j = 1}^n \sum_{i_1, i_2 = 1}^N e_{j-1}^{[i_1, i_2], k_1, k_2} (\Delta_n^2), \quad e_{j-1}^{[i_1, i_2], k_1, k_2} \in \mathcal{S}_{t_{j-1}}. \nonumber   
\end{align}
%
We have from Lemma \ref{lemma:moments} that: 
\begin{align*} 
H_{k_1, k_2}^{(1)} & = 4 \tfrac{\Delta_n}{N} \sum_{j = 1}^n \sum_{i = 1}^N \partial_{\theta, k_1} 
V_{S, 0} (\alpha_S^\dagger, \sample{j-1}{i})^\top 
\Lambda_{S, j-1}^{[i]} 
\Sigma_{j-1}^{[i]}
\bigl( \Lambda_{S, j-1}^{[i]} \bigr)^\top  
\partial_{\theta, k_2}  V_{S, 0} (\alpha_S^\dagger, \sample{j-1}{i})  \\ 
&\quad  + \tfrac{\Delta_n}{N} \sum_{j = 1}^n \sum_{i_1, i_2  = 1}^N e_{j-1}^{[i_1, i_2]} (\Delta_n)   \\
& \nonumber
=  4 \tfrac{\Delta_n}{N} \sum_{j = 1}^n \sum_{i = 1}^N 
\partial_{\theta, k_1} 
V_{S, 0} (\alpha_S^\dagger, \sample{j-1}{i})^\top 
\Lambda_{SS, j-1}^{[i]} 
\partial_{\theta, k_2}  V_{S, 0} (\alpha_S^\dagger, \sample{j-1}{i}) \\ & 
\quad + \tfrac{\Delta_n}{N} \sum_{j = 1}^n \sum_{i_1, i_2 = 1}^N e_{j-1}^{[i_1, i_2]} (\Delta_n),  
\end{align*} 
where $e_{j-1}^{[i_1, i_2]} \in \mathcal{S}_{t_{j-1}}$. 
Using Lemma \ref{lemma:base_conv} and $\Lambda_{SS} = 4 \Sigma_{SS}^{-1}$, we conclude that as $n , N \to \infty$, $N \Delta_n \to 0$: 
\begin{align*}
\mathscr{J}_{n, N}^{k_1, k_2} 
\probconv 
16  \int_0^T \mathbb{E}_{\mu_t} 
\bigl[ 
\bigl( (\partial_{\theta, k_1} V_{S, 0})^\top \, 
\Sigma_{SS}^{-1} \, 
\partial_{\theta, k_2}  V_{S, 0} \bigr) 
(\trueparam, W, \mu_t) \bigr] dt. 
\end{align*} 
The proof of Case III is now complete. \\ 

\noindent 
\textbf{Case IV.}  $(k_1, k_2) \in \mathcal{I}_{w_1} \times \mathcal{I}_{w_2}, \, w_1, w_2 \in \{\alpha_S, \alpha_R, \beta\}, \, w_1 \neq w_2$.  We will show that
$ \mathscr{J}_{n, N}^{k_1, k_2} \probconv  0$ as $n, N \to \infty, \, N \Delta_n \to 0.$ We focus only on the case $(k_1, k_2) \in \mathcal{I}_{\alpha_S} \times \mathcal{I}_{\alpha_R}$ as the other scenarios can be proved via a similar argument. Using  expression (\ref{eq:B_R}) and the estimate (\ref{eq:third_order_m_size}) again, we have that $\textstyle \mathscr{J}_{n, N}^{k_1, k_2} = \sum_{\iota = 1, 2}  I_{k_1, k_2}^{(\iota)}$ with: 
\begin{align*}
I_{k_1, k_2}^{(1)}
= 4 \tfrac{\Delta_n}{N} \sum_{j = 1}^n \sum_{i_1, i_2 = 1}^N \widetilde{I}_{j-1}^{[i_1, i_2]}, \qquad 
I_{k_1, k_2}^{(2)}
= \tfrac{1}{N} \sum_{j = 1}^n \sum_{i_1, i_2 = 1}^N e_{j-1}^{[i_1, i_2]} (\Delta_n^2), \quad 
e_{j-1}^{[i_1, i_2]} \in \mathcal{S}_{t_{j-1}}. 
\end{align*} 
where we have set: 
\begin{align*}
\widetilde{I}_{j-1}^{\, [i_1, i_2]} 
:= 
\mathbb{E}_{\trueparam} 
\bigl[   
\bigl( \partial_{\theta, k_1} V_{S, 0} (\alpha_S^\dagger, \sample{j-1}{i_1} )^\top 
\Lambda_{S, j-1}^{[i_1]}  \mathbf{m}_{j-1}^{[i_1]} \bigr) 
\bigl( \Lambda_{j-1}^{[i_2]} \mathcal{B}_{j-1}^{[i_2], k_2} 
\, \mathbf{m}_{j-1}^{[i_2]} \bigr) 
\big|\, \mathcal{F}_{t_{j-1}}^N 
\bigr].  
\end{align*}
It follows from Lemma \ref{lemma:moments} that: 
\begin{align*}
\widetilde{I}_{j-1}^{\, [i_1, i_2]} 
& = \partial_{\theta, k_1} V_{S, 0} (\alpha_S^\dagger, \sample{j-1}{i_1}) 
\Lambda_{S, j-1}^{[i_1]} 
\underbrace{\Sigma_{j-1}^{[i_1]} \, \Lambda_{j-1}^{[i_1]} }_{= I_d}   \mathcal{B}_{j-1}^{[i_1], k_2} \times \mathbf{1}_{i_1 = i_2} 
 + {e}_{j-1}^{[i_1, i_2]} (\Delta_n) \\
& = {e}_{j-1}^{[i_1, i_2]} (\Delta_n),  
\end{align*} 
where $e_{j-1}^{[i_1, i_2]} \in \mathcal{S}_{t_{j-1}}$ and in the last line we used $\Lambda_{S, j-1}^{[i_1]} \mathcal{B}_{j-1}^{[i_1], k_2} = \mathbf{0}_{d_S}$, which follows from a similar calculation to the one in the proof of Lemma \ref{lemma:key_matrix}. We thus conclude $I_{k_1, k_2}^{(\iota)}  \probconv 0, \, \iota = 1, 2$ as $n, N \to \infty$ with $N \Delta_n \to 0$.  The proof of (\ref{eq:second_m}) is now complete. 

\subsection{Proof of (\ref{eq:fourth_m})} 
\textbf{Case I.} $k_1 \in \mathcal{I}_{\alpha_S}$. We write (\ref{eq:g_alpha_S}) as 
$\textstyle \widetilde{\mathscr{G}}_{j-1}^{[i], k_1} = \sqrt{\tfrac{\Delta_n}{N}} \Xi_{j-1}^{[i], k_1, (1)} + \sqrt{\tfrac{\Delta_n^3}{N}} \Xi_{j-1}^{[i], k_1, (2)} + \sqrt{\tfrac{\Delta_n^2}{N}} \Xi_{j-1}^{[i], k_1, (3)}$.  Noticing that $\Xi_{j-1}^{[i], k_1, (q)} = e_{j-1} (1)$ for some $e_{j-1} \in \mathcal{S}_{t_{j-1}}$, for all $i \in \{1, \ldots, N\}$, $j \in \{1, \ldots, d\}$ and $q = 1, 2, 3$,  we have that 
$\widetilde{\mathscr{J}}_{n, N}^{\, k_1} =\widetilde{\mathscr{J}}_{n, N}^{\, k_1, (1)} +  \widetilde{\mathscr{J}}_{n, N}^{\, k_1, (2)}
+ \widetilde{\mathscr{J}}_{n, N}^{\, k_1, (3)}$ with: 
\begin{align*}
\widetilde{\mathscr{J}}_{n, N}^{\, k_1, (1)}
& = \tfrac{\Delta_n^2}{N^2} \sum_{j = 1}^{n} \sum_{i_1, i_2, i_3, i_4 = 1}^N \mathbb{E}_{\trueparam} \Bigl[ \prod_{q = 1}^4 \Xi_{j-1}^{[i_q], k_1, (1)}  | \mathcal{F}_{t_{j-1}}^N \Bigr];  \\
\widetilde{\mathscr{J}}_{n, N}^{\, k_1, (2)} 
&=  4 \tfrac{\Delta_n^{{5}/{2}}}{N^2} \sum_{j = 1}^{n} \sum_{i_1, i_2, i_3, i_4 = 1}^N 
\mathbb{E}_{\trueparam} \Bigl[  \Xi_{j-1}^{[i_1], k_1, (1)} 
\Xi_{j-1}^{[i_2], k_1, (1)} 
\Xi_{j-1}^{[i_3], k_1, (1)} 
\Xi_{j-1}^{[i_4], k_1, (3)}  | \mathcal{F}_{t_{j-1}}^N \Bigr]; \\ 
\widetilde{\mathscr{J}}_{n, N}^{\, k_1, (3)} 
&  = \tfrac{1}{N^2} \sum_{j = 1}^{n} \sum_{i_1, i_2, i_3, i_4 = 1}^N e_{j-1}^{[i_1, i_2, i_3, i_4]} (\Delta_n^{3}), 
\end{align*} 
for some $e_{j-1}^{[i_1, i_2, i_3, i_4]} \in \mathcal{S}_{t_{j-1}}$. We immediately have that $\widetilde{\mathscr{J}}_{n, N}^{\, k_1, (3)}$ converges to $0$ as $N \Delta_n \to 0$ under the $L_1$-norm. For the first term, we apply (\ref{eq:fourth_moment}) in Lemma \ref{lemma:moments} to get that: 
\begin{align*}
\widetilde{\mathscr{J}}_{n, N}^{\, k_1, (1)} 
& = \tfrac{1}{N^2} \sum_{j = 1}^{n} \sum_{i_1, i_2 = 1}^N e_{j-1}^{[i_1, i_2]} (\Delta_n^2)
+ \tfrac{1}{N^2} \sum_{j = 1}^{n} \sum_{i_1, i_2, i_3, i_4 = 1}^N e_{j-1}^{[i_1, i_2, i_3, i_4]} (\Delta_n^3),
\end{align*} 
for some $e_{j-1}^{[i_1, i_2]}, e_{j-1}^{[i_1, i_2, i_3, i_4]} \in \mathcal{S}_{t_{j-1}}$. Thus,$\widetilde{\mathscr{J}}_{n, N}^{\, k_1, (1)}$ converges to $0$ under $L_1$ as $n, N \to \infty$ and $N \Delta_n \to 0$. We next study the second term $\widetilde{\mathscr{J}}_{n, N}^{\, k_1, (2)}$. First, we note that this term is expressed as:
\begin{align} 
\label{eq:J_2_rep}
&\widetilde{\mathscr{J}}_{n, N}^{\, k_1, (2)} 
= \tfrac{\Delta_n^{5/2}}{N^2}\sum_{j = 1}^n \sum_{\gamma \in \{1, \ldots, N\}^4} \\  & \ \  \biggl\{
\sum_{ \lambda \in \{1, \ldots, d\}^5}
e^{(1), \gamma, \lambda}_{j-1} (1) \times   \mathbb{E}_{\trueparam} \bigl[ 
\mathbf{m}_{j-1}^{[\gamma_1], \lambda_1}
\mathbf{m}_{j-1}^{[\gamma_2], \lambda_2} 
\mathbf{m}_{j-1}^{[\gamma_3], \lambda_3} 
\mathbf{m}_{j-1}^{[\gamma_4], \lambda_4} \mathbf{m}_{j-1}^{[\gamma_4], \lambda_5}
\,\big|\, \mathcal{F}_{t_{j-1}}^N \bigr] \nonumber  \\ 
& \qquad\qquad + \sum_{ \lambda \in \{1, \ldots, d\}^3} e^{(2), \gamma, \lambda}_{j-1}
(1) \times \mathbb{E}_{\trueparam} 
\bigl[ 
\mathbf{m}_{j-1}^{[\gamma_1], \lambda_1}
\mathbf{m}_{j-1}^{[\gamma_2], \lambda_2} 
\mathbf{m}_{j-1}^{[\gamma_3], \lambda_3} 
\,\big|\, \mathcal{F}_{t_{j-1}}^N \bigr] 
\biggr\}
\nonumber 
\end{align} 
for $e^{(\iota), \gamma, \lambda}_{j-1} \in \mathcal{S}_{t_{j-1}}, \, \iota = 1, 2$. Similarly to the analysis in obtaining (\ref{eq:third_order_m_size}), using the decomposition $\mathbf{m}_{j-1} = C_{j-1} + D_{j-1}$ specified in (\ref{eq:decomp_m}) and noticing that products of odd numbers of Gaussian variates become $0$ under expectation, we have that the conditional expectations included in (\ref{eq:J_2_rep}) can be  treated as $e_{j-1}^{\gamma, \lambda} (\Delta_n^{1/2})$ for $e_{j-1}^{\gamma, \lambda} \in \mathcal{S}_{t_{j-1}}$. Thus, (\ref{eq:J_2_rep}) converges to $0$ in $L_1$-norm as $N \Delta_n \to 0$. Proof of \textbf{Case I.} is now complete. 
\\ 

\noindent 
\textbf{Case II.} $k_1 \in \mathcal{I}_{\alpha_R}$. This case follows the same argument used in \textbf{Case I.}, and we omit the details to avoid repetition. 
\\

\noindent 
\textbf{Case III.} $k_1 \in \mathcal{I}_{\beta}$. We write (\ref{eq:g_beta}) as $\textstyle \widetilde{\mathscr{G}}_{j-1}^{[i], k_1} = \sqrt{\tfrac{\Delta_n}{N}} \Xi_{j-1}^{[i], k_1, (1)} + \sqrt{\tfrac{\Delta_n^2}{N}} \Xi_{j-1}^{[i], k_1, (2)}$ to get that $\widetilde{\mathscr{J}}_{n, N}^{k_1} = \widetilde{\mathscr{J}}_{n, N}^{k_1, (1)} + \widetilde{\mathscr{J}}_{n, N}^{k_1, (2)} + \widetilde{\mathscr{J}}_{n, N}^{k_1, (3)}$ with: 
\begin{align*}
\widetilde{\mathscr{J}}_{n, N}^{k_1, (1)} 
& = \tfrac{\Delta_n^2}{N^2} \sum_{j = 1}^{n} \sum_{i_1, i_2, i_3, i_4 = 1}^N \mathbb{E}_{\trueparam} \Bigl[ \prod_{q = 1}^4 \Xi_{j-1}^{[i_q], k_1, (1)}  \,\big|\, \mathcal{F}_{t_{j-1}}^N \Bigr];  \\
\widetilde{\mathscr{J}}_{n, N}^{\, k_1, (2)} 
&=  4 \tfrac{\Delta_n^{{5}/{2}}}{N^2} \sum_{j = 1}^{n} \sum_{i_1, i_2, i_3, i_4 = 1}^N 
\mathbb{E}_{\trueparam} \Bigl[  \Xi_{j-1}^{[i_1], k_1, (1)} 
\Xi_{j-1}^{[i_2], k_1, (1)} 
\Xi_{j-1}^{[i_3], k_1, (1)} 
\Xi_{j-1}^{[i_4], k_1, (2)}  \,\big|\, \mathcal{F}_{t_{j-1}}^N \Bigr]; \\ 
\widetilde{\mathscr{J}}_{n, N}^{\, k_1, (3)} 
&  = \tfrac{1}{N^2} \sum_{j = 1}^{n} \sum_{i_1, i_2, i_3, i_4 = 1}^N {e}_{j-1}^{[i_1, i_2, i_3, i_4]} (\Delta_n^{3}),  
\end{align*} 
for some $e_{j-1}^{[i_1, i_2, i_3, i_4]} \in \mathcal{S}_{t_{j-1}}$. We immediately get that $\widetilde{\mathscr{J}}_{n, N}^{k_1, (3)} \to 0$ under $L_1$ as $N \Delta_n \to 0$. The second term is shown to converge to $0$ under $L_1$ due to a similar analysis employed for (\ref{eq:J_2_rep}) in \textbf{Case I.} as each term in the summation in $\widetilde{\mathscr{J}}_{n, N}^{\, k_1, (2)}$ involves products of odd numbers of variable  $\mathbf{m}_{j-1}^{[i], k}$, which results in $\mathbb{E}_{\trueparam} \bigl[  \Xi_{j-1}^{[i_1], k_1, (1)} 
\Xi_{j-1}^{[i_2], k_1, (1)} 
\Xi_{j-1}^{[i_3], k_1, (1)} 
\Xi_{j-1}^{[i_4], k_1, (2)}  | \mathcal{F}_{t_{j-1}}^N \bigr] = e_{j-1} (\Delta_n)$ for some $e_{j-1} \in \mathcal{S}_{t_{j-1}}$. For the term $\widetilde{\mathscr{J}}_{n, N}^{k_1, (1)}$, we make use of the following result, whose proof is postponed to the end of this section. 
\begin{lemma} \label{lemma:final_lemma}
It holds that for any $\gamma = (\gamma_1, \gamma_2, \gamma_3, \gamma_4) \in \{1, \ldots, N\}^4$, $k \in  \mathcal{I}_{\beta}$ and $1 \le j \le n$, 
\begin{align}
\begin{aligned}  
\label{eq:prod_four_beta}
& \mathbb{E}_{\trueparam} 
\Bigl[ \prod_{q = 1}^4 \bigl\{ (\mathbf{m}_{j-1}^{[\gamma_q]})^\top  \partial_{\theta, k} \Lambda^{[\gamma_q]}_{j-1} \mathbf{m}_{j-1}^{[\gamma_q]} 
+  \partial_{\theta, k} \log \det \Sigma_{j-1}^{[\gamma_q]} \bigr\}  \,\big|\, \mathcal{F}_{t_{j-1}}^N \Bigr] \\ 
& \qquad 
= e_{j-1}^{\gamma} (1) \times\mathbf{1}_{
(\gamma_1, \gamma_2) = (\gamma_3, \gamma_4) \, \mathrm{or}\,  (\gamma_1, \gamma_3) = (\gamma_2, \gamma_4)\,
\mathrm{or}\,  (\gamma_1, \gamma_2) = (\gamma_4, \gamma_3)} 
+ f_{j-1}^{\gamma} (\Delta_n),  
\end{aligned} 
\end{align} 
for $e_{j-1}^\gamma,  \, f_{j-1}^\gamma \in \mathcal{S}_{t_{j-1}}$. 
\end{lemma}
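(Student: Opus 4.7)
The plan is to exploit the decomposition $\mathbf{m}_{j-1}^{[i]} = C_{j-1}^{[i]} + D_{j-1}^{[i]}$ from (\ref{eq:decomp_m}) together with the identity $\partial_{\theta,k}\log\det\Sigma_{j-1}^{[i]} = -\mathrm{tr}(\partial_{\theta,k}\Lambda_{j-1}^{[i]}\,\Sigma_{j-1}^{[i]})$, which recasts each factor in (\ref{eq:prod_four_beta}) as a centered quadratic form $Y_q := (\mathbf{m}_{j-1}^{[\gamma_q]})^\top A_q\, \mathbf{m}_{j-1}^{[\gamma_q]} - \mathrm{tr}(A_q \Sigma_q)$ with the $\mathcal{F}_{t_{j-1}}^N$-measurable coefficients $A_q:=\partial_{\theta,k}\Lambda_{j-1}^{[\gamma_q]}$ and $\Sigma_q:=\Sigma_{j-1}^{[\gamma_q]}$. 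The key structural observation is that, conditionally on $\mathcal{F}_{t_{j-1}}^N$, the vector $C_{j-1}^{[i]}$ is centered Gaussian with covariance $\Sigma_{j-1}^{[i]}$, and the family $\{C_{j-1}^{[i]}\}_{i=1}^N$ is mutually independent, since each $C_{j-1}^{[i]}$ depends only on the increments of $B^{[i]}$ on $[t_{j-1}, t_j]$ contracted with $\mathcal{F}_{t_{j-1}}^N$-measurable coefficients.

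I would then introduce the Gaussian surrogate $\widetilde{Y}_q := (C_{j-1}^{[\gamma_q]})^\top A_q\, C_{j-1}^{[\gamma_q]} - \mathrm{tr}(A_q \Sigma_q)$, which satisfies $\mathbb{E}_{\trueparam}[\widetilde{Y}_q \mid \mathcal{F}_{t_{j-1}}^N]=0$, and evaluate $\mathbb{E}_{\trueparam}[\prod_{q=1}^4 \widetilde{Y}_q \mid \mathcal{F}_{t_{j-1}}^N]$ using conditional independence across distinct particle indices: whenever some $\gamma_q$ appears exactly once in the multiset $\{\gamma_1,\ldots,\gamma_4\}$, the corresponding $\widetilde{Y}_q$ factorises out and vanishes by its zero conditional mean. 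A direct case analysis on the partition of $\{1,2,3,4\}$ induced by the equivalence $q \sim q' \iff \gamma_q = \gamma_{q'}$ shows that the only configurations without a singleton block are precisely the three pairings appearing in the indicator of (\ref{eq:prod_four_beta}); in each such case, $\mathbb{E}_{\trueparam}[\prod_q \widetilde{Y}_q \mid \mathcal{F}_{t_{j-1}}^N]$ reduces via Isserlis' theorem to a polynomial in the entries of $A_q$ and $\Sigma_q$, whose $L^p$-norms are uniformly bounded by Assumptions \ref{ass:hor}, \ref{ass:reg_coeff} and Lemma \ref{lemma:bds_aux}, yielding the term $e_{j-1}^\gamma(1) \in \mathcal{S}_{t_{j-1}}$.

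The main obstacle will be bounding the remainder $\mathbb{E}_{\trueparam}[\prod_q Y_q - \prod_q \widetilde{Y}_q \mid \mathcal{F}_{t_{j-1}}^N]$ by $\mathcal{O}(\Delta_n)$ in $\mathcal{S}_{t_{j-1}}$, rather than the $\mathcal{O}(\Delta_n^{1/2})$ that naive Cauchy--Schwarz together with $\|D_{j-1}^{[i]}\|_{L^p} \lesssim \Delta_n^{1/2}$ would deliver. My plan here is to refine, as in the proofs of (\ref{eq:second_moment}) and (\ref{eq:fourth_moment}), the iterated It\^o--Taylor expansion of the residuals $\rho_S^{[i]}(j)$, $\rho_R^{[i]}(j)$ in (\ref{eq:rho_S})--(\ref{eq:rho_R}), isolating the explicit leading iterated integrals $\widetilde{\rho}_S, \widetilde{\rho}_R$ from genuinely higher-order remainders. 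Expanding $Y_q - \widetilde{Y}_q$ into schematic terms $(C^{[\gamma_q]})^\top A_q D^{[\gamma_q]}$ and $(D^{[\gamma_q]})^\top A_q D^{[\gamma_q]}$ and plugging these into $\prod_q Y_q - \prod_q \widetilde{Y}_q$ produces a finite sum of conditional expectations of products of iterated It\^o integrals across the indices $\gamma_q$, weighted by $\mathcal{F}_{t_{j-1}}^N$-measurable tensors. Each such expectation is evaluated via the cross-correlation identities for iterated integrals recalled in the proof of Lemma \ref{lemma:moments}, in particular the vanishing of the correlations between $I_m^{[i]}(j)$, $I_{m,0}^{[i,i]}(j)$ and the triple integrals $I_{m_1,m_2,0}^{[i_1,i_2,i_3]}(j)$, $I_{m_1,m_2}^{[i_1,i_2]}(j)$, together with the vanishing of odd-parity Gaussian products. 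These cancellations supply the extra $\Delta_n^{1/2}$ factor needed to upgrade the crude bound to the required $\Delta_n$ uniformly in $\gamma$, and combined with the Gaussian-surrogate analysis complete the proof of (\ref{eq:prod_four_beta}).
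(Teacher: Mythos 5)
Your proposal follows essentially the same route as the paper's proof: decompose $\mathbf{m}_{j-1}^{[i]} = C_{j-1}^{[i]} + D_{j-1}^{[i]}$, observe that each centered quadratic form in $C$ has zero conditional mean so that conditional independence across particles forces the pure-$C$ term to vanish unless the indices form one of the three pairings, and upgrade the single-$D$ cross term from the naive $\mathcal{O}(\Delta_n^{1/2})$ to $\mathcal{O}(\Delta_n)$ by replacing $D$ with the leading iterated-integral piece $\widetilde{D}$ and invoking the vanishing correlations of odd-parity iterated It\^{o} integrals, exactly as the paper does in splitting into $E^{\gamma,(1)}$, $E^{\gamma,(2)}$ and a higher-order remainder. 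The only difference is presentational (your centered quadratic forms $Y_q$, $\widetilde{Y}_q$ versus the paper's direct term-by-term expansion), not mathematical.
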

\noindent Due to Lemma \ref{lemma:final_lemma}, we immediately obtain that $\widetilde{\mathscr{J}}_{n, N}^{k_1, (1)} \to 0$ under $L_1$ as $N \Delta_n \to  0$. We thus conclude proof of \textbf{Case III.}, and now the proof of Proposition \ref{prop:clt} is complete. \\ 

\noindent 
\textit{Proof of Lemma \ref{lemma:final_lemma}.} 
Using the decomposition $\mathbf{m}_{j-1}^{[i]} = C_{j-1}^{[i]} + D_{j-1}^{[i]}$ specified in (\ref{eq:decomp_m}) and noticing that $\mathbb{E}_{\trueparam} |D_{j-1}^{[i]}|^p = \mathcal{O} (\Delta_n^{1/2}), \, p \ge 1$, we have that (LHS of (\ref{eq:prod_four_beta})) $= \textstyle \sum_{\iota = 1, 2} E^{\gamma, (\iota)}_{j-1} + g_{j-1}^{\gamma} (\Delta_n)$ for some $g_{j-1}^{\gamma} \in \mathcal{S}_{t_{j-1}}$, where: 
\begin{align}
E^{\gamma, (1)}_{j-1} & = 
\mathbb{E}_{\trueparam} 
\Bigl[ \prod_{q = 1}^4 
\bigl\{ (C_{j-1}^{[\gamma_q]})^\top \bigl( \partial_{\theta, k} \Lambda_{j-1}^{[\gamma_q]} \bigr) \, C_{j-1}^{[\gamma_q]} 
+  \partial_{\theta, k} \log \det \Sigma_{j-1}^{[\gamma_q]} \bigr\}  \,\big|\, \mathcal{F}_{t_{j-1}}^N \Bigr];  \nonumber \\ 
E^{\gamma, (2)}_{j-1} 
& =  2 \sum_{q_1 = 1}^4
\mathbb{E}_{\trueparam} \Bigl[  
\prod_{q_2 \in \{1, 2, 3, 4\} \setminus \{q_1\}}  \Bigl\{ (C_{j-1}^{[\gamma_{q_2}]})^\top \bigl( \partial_{\theta, k} \Lambda_{j-1}^{[\gamma_{q_2}]} \bigr) \, C_{j-1}^{[\gamma_{q_2}]} 
+  \partial_{\theta, k} \log \det \Sigma_{j-1}^{[\gamma_{q_2}]} \Bigr\} \nonumber \\  
& \qquad \qquad \qquad \qquad \times C_{j-1}^{[\gamma_{q_1}]} 
\bigl( \partial_{\theta, k} \Lambda_{j-1}^{[\gamma_{q_1}]} \bigr) \, D_{j-1}^{[\gamma_{q_1}]}  \,  \big|\, \mathcal{F}_{t_{j-1}}^N \Bigr].  \nonumber  
\end{align} 
For the first term, we note that for any $1 \le q \le 4$, $k \in \mathcal{I}_{\beta}$ and $1 \le j \le n$:
\begin{align*} 
\mathbb{E}_{\trueparam} 
&\bigl[
\bigl\{ (C_{j-1}^{[\gamma_q]})^\top \bigl( \partial_{\theta, k} \Lambda_{j-1}^{[\gamma_q]} \bigr) \, C_{j-1}^{[\gamma_q]} 
+  \partial_{\theta, k} \log \det \Sigma_{j-1}^{[\gamma_q]} \bigr\}  \,\big|\, \mathcal{F}_{t_{j-1}}^N \bigr] 
\\ &\qquad \qquad\qquad= \mathrm{tr} \bigl[ \bigl( \partial_{\theta, k} \Lambda_{j-1}^{[\gamma_q]} \bigr) \, \Sigma_{j-1}^{[\gamma_q]} \bigr]  +  \partial_{\theta, k} \log \det \Sigma_{j-1}^{[\gamma_q]} 
= 0,  \nonumber 
\end{align*}  
as we have seen in Appendix \ref{sec:pf_score}, {Case I}. Also, due to the independence of Gaussian variables in $C_{j-1}^{[\gamma_q]}$ between different particles, we observe that the term $E_{j-1}^{\gamma, (1)}$ takes non-trivial value evaluated as $\mathcal{O}(1)$ under $L_1$-norm only when $(\gamma_1, \gamma_3) = ( \gamma_2, \gamma_4)$ or $(\gamma_1, \gamma_2) = ( \gamma_3, \gamma_4)$ or $(\gamma_1, \gamma_2) = ( \gamma_4, \gamma_2)$, otherwise $0$. We then study the second term. As we have seen in proof of (\ref{eq:fourth_moment}) in Appendix \ref{sec:moments}, we have that  $D_{j-1}^{[\gamma_{q_1}]} = \widetilde{D}_{j-1}^{[\gamma_{q_1}]} + e_{j-1} (\Delta_n)$ for some $e_{j-1} \in \mathcal{S}_{t_{j-1}}$, where $\widetilde{D}_{j-1}^{[\gamma_{q_1}]}$ is defined in (\ref{eq:tilde_D}).  Since it follows from moments of stochastic integrals that: 
\begin{align*}
\mathbb{E}_{\trueparam} \Bigl[  
\prod_{q_2 \in \{1, 2, 3, 4\} \setminus \{q_1\}}  \Bigl\{ (C_{j-1}^{[\gamma_{q_2}]})^\top \bigl( \partial_{\theta, k} \Lambda_{j-1}^{[\gamma_{q_2}]} \bigr) \, &C_{j-1}^{[\gamma_{q_2}]} 
+  \partial_{\theta, k} \log \det \Sigma_{j-1}^{[\gamma_{q_2}]} \Bigr\} \\ &\times  C_{j-1}^{[\gamma_{q_1}]} 
\bigl( \partial_{\theta, k} \Lambda_{j-1}^{[\gamma_{q_1}]} \bigr) \, \widetilde{D}_{j-1}^{[\gamma_{q_1}]}\,\Big|\,\mathcal{F}_{t_{j-1}}^N \Bigr] = 0
\end{align*} 
for any $1 \le q_1 \le 4$, we get that $E^{\gamma, (2)}_{j-1} = f_{j-1}^\gamma (\Delta_n)$ for some $f_{j-1}^\gamma \in \mathcal{S}_{t_{j-1}}$. We thus conclude.  

\bibliographystyle{plain} 
\bibliography{ips_hypo_aap}       %

\end{document}